\documentclass[reqno,11pt]{article}
\usepackage{mathrsfs}
\usepackage{amssymb}
\usepackage{amsthm}
\usepackage{amsmath}
\usepackage{amsfonts}
\usepackage{mathtools}
\usepackage{enumerate}
\usepackage{bm}

\usepackage{graphicx}

\usepackage{bbm}
\usepackage{mathrsfs}
\usepackage{amssymb}
\usepackage[thicklines]{cancel}

\usepackage[usenames,dvipsnames]{xcolor}
\usepackage{soul}

\usepackage{float}
\let\origfigure\figure
\let\endorigfigure\endfigure
\renewenvironment{figure}[1][H]{%
  \origfigure[H]%
}{%
  \endorigfigure%
}

\usepackage{txfonts}

\usepackage{psfrag}
\usepackage[numbers,sort&compress]{natbib}
\usepackage{setspace,color,wrapfig}
\usepackage[colorlinks,linkcolor=blue,citecolor=cyan]{hyperref}

\numberwithin{equation}{section}
\newtheorem{theorem}{Theorem}[section]

\newtheorem{proposition}[theorem]{Proposition}

\theoremstyle{definition}
\newtheorem{definition}[theorem]{Definition}

\newtheorem{remark}[theorem]{Remark}
\theoremstyle{remark}

\begin{document}
\title{ Continuous   sonic-supersonic  flows in two-dimensional infinitely long divergent nozzles }
\author{Lili Qian\thanks{School of Mathematics, Jilin University, Changchun, Jilin Province, China, 130012.  Email: qianll24@mails.jlu.edu.cn}\and
Chunpeng Wang\thanks{School of Mathematics, Jilin University, Changchun, Jilin Province, China, 130012. Email: wangcp@jlu.edu.cn}\and
Zihao Zhang\thanks{School of Mathematics, Jilin University, Changchun, Jilin Province, China, 130012. Email: zhangzihao@jlu.edu.cn}}
\date{}
\maketitle
\newcommand{\de}{{\mathrm{d}}}
\def\div{{\rm div\,}}
\def\curl{{\rm curl\,}}
\def\th{\theta}
\newcommand{\ro}{{\rm rot}}
\newcommand{\sr}{{\rm supp}}
\newcommand{\sa}{{\rm sup}}
\newcommand{\va}{{\varphi}}
\newcommand{\me}{\mathcal{M}}
\newcommand{\ml}{\mathcal{V}}
\newcommand{\mn}{\mathcal{N}}
\newcommand{\md}{\mathcal{D}}
\newcommand{\mg}{\mathcal{G}}
\newcommand{\mh}{\mathcal{H}}
\newcommand{\mf}{\mathcal{F}}
\newcommand{\ms}{\mathcal{S}}
\newcommand{\mt}{\mathcal{T}}
\newcommand{\ma}{\mathcal{L}}
\newcommand{\mb}{\mathcal{B}}
\newcommand{\mc}{\mathcal{C}}
\newcommand{\mr}{\mathcal{R}}
\newcommand{\mq}{\mathcal{Q}}
\newcommand{\mj}{\mathcal{J}}
\newcommand{\mw}{\mathcal{W}}
\newcommand{\my}{\mathcal{U}}
\newcommand{\n}{\nabla}
\newcommand{\m}{\Omega}
 \newcommand{\q}{{\rm R}}
\newcommand{\p}{{\partial}}
\newcommand{\ld}{{\tilde}}
\newcommand{\h}{\hat}
\pagestyle{myheadings} \markboth{\hfill     2-D continuous   sonic-supersonic  flows  \hfill}{\hfill 2-D continuous   sonic-supersonic  flows  \hfill}\maketitle
\begin{abstract}
This paper concerns continuous sonic-supersonic  flows in a two-dimensional infinitely long divergent nozzle with straight solid walls. For a given inlet that is a small perturbation of an arc centered at the vertex of the nozzle,
        we establish the global existence  and uniqueness of a continuous  sonic-supersonic potential flow which is close to the radially symmetric  sonic-supersonic  flow. The flow is sonic at the inlet, with its velocity along the normal direction, and becomes supersonic in the nozzle.
Such a sonic-supersonic model is governed by a quasilinear nonstrictly hyperbolic equation with degeneracy at the inlet, and the degeneracy is weak in the sense that there exist exactly two distinct characteristics from each sonic point, except for those at the walls, pointing into the supersonic region. One of the crucial ingredients of
the analysis is to  obtain the precise asymptotic behavior near the sonic inlet by the  method of characteristics, which is essential for establishing the local existence of continuous sonic-supersonic flows. Another one is  to extend the local solution globally and derive uniform estimates in the supersonic region.
\end{abstract}

\begin{center}
\begin{minipage}{5.5in}
Mathematics Subject Classifications 2020:  35Q31, 35L80, 76J20, 76N10.\\
Key words: sonic-supersonic flows, quasilinear nonstrictly hyperbolic equation, degeneracy.
\end{minipage}
\end{center}
\tableofcontents
\section{   Introduction  and main results}\noindent
\par In this paper, we focus on continuous   sonic-supersonic  flows in a two-dimensional infinitely long divergent nozzle. Compressible flow problems naturally arise in physical experiments and engineering designs, and there are many experiments and numerical simulations and rigorous theories involved in this field \cite{B58,CF48}. Two-dimensional steady compressible flows are governed by the following Euler system:
\begin{align}
&\frac{\partial}{\partial x}(\rho u)+\frac{\partial}{\partial y}(\rho v)=0,\label{1-eq1.1}\\
&\frac{\partial}{\partial x}(P+\rho u^{2})+\frac{\partial}{\partial y}(\rho uv)=0,\label{1-eq1.2}\\
&\frac{\partial}{\partial x}(\rho uv)+\frac{\partial}{\partial y}(P+\rho v^{2})=0,\label{1-eq1.3}
\end{align}
where $(u,v)$, $ \rho$ and $P$ represent the velocity, the density and the pressure, respectively. The flow is assumed to be isentropic so that $P=P(\rho)$ is a smooth function. In particular, for the polytropic gas with the adiabatic exponent $\gamma>1$,
\begin{align}\label{1-eq1.4}
   P(\rho)=\frac{1}{\gamma}\rho^\gamma
\end{align}
is the normalized pressure. Assume further that the flow is irrotational, i.e.
\begin{equation}\label{1-eq1.5}
  \frac{\partial u}{\partial y}=\frac{\partial v}{\partial x}.
\end{equation}
Then the density $\rho$ is expressed in terms of the speed $q$ according to the Bernoulli law
\cite{CF48}
\begin{equation}\label{1-eq1.6}
  \rho(q^2)=\left(1-\frac{\gamma-1}{2}q^2\right)^{1/(\gamma-1)},\quad q=\sqrt{u^2+v^2},\quad0\leq q\leq c^*=\sqrt{2/(\gamma-1)}.
\end{equation}
The sound speed $c$ is defined as $c^2=P'(\rho)$. At the sonic state, the speed is
$c_*=\sqrt{2/(\gamma+1)}$, which is critical in the sense that the flow is subsonic when
 $q<c_*$, sonic when $q=c_*$, and supersonic when $q>c_*$.
  The Euler system \eqref{1-eq1.1}--\eqref{1-eq1.5} can be transformed into the full potential equation
  \begin{equation}\label{1-eq1.7}
  \div(\rho(|\n \varphi|^2)\n \varphi)=0,
  \end{equation}
  where $\varphi$ is a velocity potential with $\n\varphi=(u,v)$, and $\rho$  is  given by \eqref{1-eq1.6}.
 It is noted that \eqref{1-eq1.7} is elliptic in the subsonic region, hyperbolic in the supersonic region,
 and degenerate at the sonic state.
 \par For sonic-supersonic flows, it was Bers \cite{B50} who first investigated the extension of a
 subsonic-sonic flow across its sonic curve. He showed  that a subsonic-sonic flow can be locally and uniquely extended across the sonic curve as a supersonic flow without discontinuity, provided no exceptional points exist on the sonic curve. The sonic-supersonic flow was formulated as a Cauchy problem for the linear Chaplygin equations in the hodograph plane, with no boundary conditions prescribed. This continuation was also established in \cite{ZZ14} by using the coordinate system $(\sqrt{q^2-c^2}, \varphi)$.
  The authors in \cite{WX16} further characterized the structure of
 the sonic curve of a smooth transonic flow. It was shown that for a smooth transonic flow, the set of exceptional points is either empty, a single point, or a closed line segment on which the velocity potential is identically constant. At any non-exceptional point, there exist exactly two distinct characteristics that point into the supersonic region. In contrast, at an interior point of an exceptional segment, the two characteristics coincide with the segment itself and do not extend locally into the supersonic region. Furthermore, it was proved in \cite{WX19} that if the throat is suitably flat, then there exists a unique smooth transonic irrotational flow of Meyer type of a de Laval nozzle, with all sonic points being exceptional. The flow constructed in \cite{WX19} is supersonic at the outlet, and its supersonic extension was studied in \cite{WX15}. It was shown that this local flow can be extended to a global smooth transonic flow of Meyer type in an infinitely long nozzle, provided that the upper wall beyond the local flow region is convex. Furthermore, the author in \cite{W22} investigated the global existence of smooth sonic-supersonic potential flows in a two-dimensional expanding nozzle with the critical geometry at the inlet. For the full steady Euler system, the authors established the local existence of classical sonic-supersonic solutions in \cite{HL20} and further obtained global smooth supersonic-sonic solutions and analyzed their behavior near the sonic curve in \cite{HL21}. The   local existence of sonic-supersonic jet
flows in two-dimensional nozzles was studied in \cite{L23}.   There are also several works  focusing on sonic-supersonic flows for the two-dimensional pseudo steady Euler equations, see \cite{HL19,ZZ17} and references therein. 

\par  For straight convergent nozzles, there exist radially symmetric continuous subsonic-sonic flows. The structural stability of these flows was first established in \cite{WX13}. More precisely, for a prescribed incoming mass flux and an inlet that is a small perturbation of a circular arc centered at the nozzle vertex, the authors  proved that there exists an open interval, depending only on the adiabatic exponent and the arc length, such that if the incoming mass flux belongs to this interval and the inlet perturbation is sufficiently small, then there exists a unique subsonic-sonic flow, which is a perturbation of the corresponding radially symmetric subsonic-sonic flow with the same incoming mass flux.
Later on, the authors in \cite{NW16,GNW21,NW25} established the unique existence of continuous subsonic-sonic flows under perturbations of the inlet together with the incoming flow angle, the upper wall, and the inlet together with the upper wall, respectively. Recently,  the structural stability of radially symmetric
  subsonic-sonic flows under perturbations of the inlet, the upper wall, and the incoming flow angle was investigated in \cite{NZ26}.
 \par For   infinitely long divergent nozzles, the global existence of smooth three-dimensional supersonic Euler flows was established in \cite{XY21}, while the existence of global continuous two-dimensional sonic-supersonic flows was proved in \cite{L21}.  On the other hand,  there exist radially symmetric continuous sonic-supersonic flows in  infinite  divergent nozzles.  However, there are no works on  the structural stability of such flows.  Consequently,  for a given inlet that is a small perturbation of an
arc centered at the vertex of the nozzle and a given incoming mass flux, we will establish  the existence and stability of two-dimensional continuous sonic-supersonic flows  whose velocity is orthogonal to the inlet.
\subsection{Governing equations}\noindent
\par Define a velocity potential $\varphi$ and a stream function $\psi$ by
$$\dfrac{\partial\varphi}{\partial x}=q\cos\theta,\quad\dfrac{\partial\varphi}{\partial y}=q\sin\theta,\quad\dfrac{\partial\psi}{\partial x}=-\rho q\sin\theta,\quad\dfrac{\partial\psi}{\partial y}=\rho q\cos\theta, $$
 where $\theta$ is the angle of the velocity inclination to the $x$-axis. Then the potential equation \eqref{1-eq1.7} can be reduced to the following Chaplygin equations (\cite{B58}):
\begin{align}\label{1-eq2.6}
\frac{\partial\theta}{\partial\psi}+\frac{\rho(q^2)+2q^2\rho'(q^2)}{q\rho^2(q^2)}\frac{\partial q}{\partial\varphi}=0,\quad\frac{1}{q}\frac{\partial q}{\partial\psi}-\frac{1}{\rho(q^2)}\frac{\partial\theta}{\partial\varphi}=0
\end{align}
in the potential-stream coordinates $(\varphi,\psi).$ The coordinates transformation between the two coordinate systems is valid at least in the absence of stagnation and vacuum points
since $\small{\frac {\partial ( \varphi , \psi ) }{\partial ( x, y) }= \rho q^{2}}$. Eliminating $\theta$ from \eqref{1-eq2.6} yields the following second-order quasi-linear equation:
\begin{equation}\label{1-eq2.7}
  \frac{\partial^2A(q)}{\partial\varphi^2}+\frac{\partial^2B(q)}{\partial\psi^2}=0,
\end{equation}
where
$$A(q)=\int_{c_*}^q\frac{\rho(s^2)+2s^2\rho'(s^2)}{s\rho^2(s^2)}ds,\quad B(q)=\int_{c_*}^q\frac{\rho(s^2)}{s}ds,\quad0<q<c^*.$$
Here, $B(\cdot)$ is strictly increasing in $(0,c^*)$, while $A(\cdot)$ is strictly increasing in $(0,c_*]$ and strictly decreasing in $[c_*,c^*)$. It can be checked easily that both \eqref{1-eq1.7} and \eqref{1-eq2.7}
are elliptic in the subsonic region $(q<c_*)$ and hyperbolic in the supersonic region
$(q>c_{*})$, while singular at the sonic state $(q=c_*)$ and the vacuum $(q=c^*).$
Furthermore, as shown in \cite{WX19}, in the supersonic region, \eqref{1-eq2.7} can be rewritten
as
$$Q_{\varphi\varphi}-(b(Q)Q_\psi)_\psi=0,$$
or equivalent to the following first-order hyperbolic system:
$$\begin{cases}
W_\varphi+b^{1/2}(Q)W_\psi=\frac{1}{4}b^{-1}(Q)p(Q)W(W+Z),
\\[2ex]Z_\varphi-b^{1/2}(Q)Z_\psi=-\frac{1}{4}b^{-1}(Q)p(Q)Z(W+Z),
\end{cases}$$
where
\begin{equation*}
  Q=A(q),\quad W=Q_\varphi-b^{1/2}(Q)Q_\psi,\quad Z=-Q_\varphi-b^{1/2}(Q)Q_\psi,
\end{equation*}
and
$$
b(s)=\left.\left(\frac{\gamma+1}{2}q^{2}-1\right)^{-1}\left(1-\frac{\gamma-1}{2}q^{2}\right)^{2/(\gamma-1)+1}\right|_{q=A_{+}^{-1}(s)}>0,\quad s<0,
$$
$$
p(s)=\left(\gamma+1\right)q^{4}\left.\left(\frac{\gamma+1}{2}q^{2}-1\right)^{-3}\left(1-\frac{\gamma-1}{2}q^{2}\right)^{3/(\gamma-1)+1}\right|_{q=A_{+}^{-1}(s)}>0,\quad s<0,
$$
with $A_+^{-1}$ being the inverse function of $A(\cdot)$ lying in $[c_*,c^*)$.

\subsection{Mathematical formulation in the physical plane}\noindent
\par We first formulate the  problem in the physical plane.  Assume that the two walls of the nozzle are located symmetrically with respect to the $x$-axis
with the vertex being (0,0) and the angle at the vertex being $2\theta_0$. Let $P_{0}^{-} = (R_{0}\cos\theta_{0},-R_{0}\sin\theta_{0})$ and $P_{0}^{+} = (R_{0}\cos\theta_{0},R_{0}\sin\theta_{0})$ with $R_{0}> 0$
being two fixed points on the lower and upper walls respectively.

Give an inlet from $P_0^-$ to $P_0^+$
$$\Gamma_{\mathrm{in}}:x=g(y),\quad -R_0\sin\theta_0\leq y\leq R_0\sin\theta_0,$$
where  $g\in C^4([-R_0\sin\theta_0,R_0\sin\theta_0])$ satisfies
\begin{equation}\label{1-eq2.1}
g(\pm R_0\sin\theta_0)=R_0\cos\theta_0 \ \ {\rm{and}} \ \ g'(\pm R_0\sin\theta_0)=\mp\tan\theta_0,
\end{equation}
The angle $\Theta$ of the velocity at the inlet is orthogonal to the inlet, that is,
$$
\Theta(y)=-\arctan g'(y),\quad-R_0\sin\theta_0\leq y\leq R_0\sin\theta_0.
$$
Here $\Theta \in C^3([-R_0\sin\theta_0,R_0\sin\theta_0])$ satisfies
$$
\Theta(\pm R_0\sin\theta_0)=\pm \theta_0.
$$
In this paper, we  focus on  continuous sonic-supersonic flows in the two-dimensional infinite divergent nozzle $\Omega$ (See Figure \ref{1-pic1}), which is  bounded by the nozzle walls $y=\pm x\tan\theta_0$ and the given inlet $\Gamma_{\mathrm{in}}$.
 \input{exten1.TPX}
 On the nozzle walls, the flow satisfies the slip condition
$$
\frac{\partial \varphi}{\partial\nu}(x,y)=0,\quad y=\pm x\tan\theta_0,
$$
 where $\nu$ is the unit outer normal to the walls. At the inlet, since the angle of the velocity of the flow is orthogonal to
the inlet, the velocity potential is a constant. Without loss of generality, we assume that it is to be zero. That is to say,
the boundary condition at the inlet is
$$
\varphi\left(g(y),y\right)=0 \ \ {\rm{and}} \ \ |\nabla \varphi(g(y),y)|=c_*,\quad -R_0\sin\theta_0\leq y\leq R_0\sin\theta_0.
$$
Furthermore, the incoming mass flux of the flow is
\begin{align*}
&\int_{\Gamma_{\mathrm{in}}}\left|\nabla\varphi(x,y)\right|\rho\left(\left|\nabla\varphi(x,y)
\right|^2\right)ds\\
&=\int_{-R_0\sin\theta_0}^{R_0\sin\theta_0}
\frac{\left|\nabla\varphi(g(y),y)\right|\rho\left(\left|\nabla\varphi(g(y),y)
\right|^2\right)}{\cos\Theta(y)}dy\\
&=\int_{-R_0\sin\theta_0}^{R_0\sin\theta_0}\frac{c_*^{2/(\gamma-1)+1}}{\cos\Theta(y)}dy=m.
\end{align*}
Therefore, the problem in the physical plane can be formulated as
\begin{align}
&\div(\rho(|\nabla\varphi|^2)\nabla\varphi)=0, \quad&&(x,y)\in \Omega,\label{1-eq2.10}\\
&\varphi(g(y),y)=0, \quad |\nabla \varphi(g(y),y)|=c_*, \quad &&-R_0\sin\theta_0<y<R_0\sin\theta_0, \label{1-eq2.11}\\
&\frac{\partial \varphi}{\partial \nu}(x,x\tan\theta_0)=0, \quad &&R_0\cos\theta_0<x<+\infty, \label{1-eq2.12}\\
&\frac{\partial \varphi}{\partial \nu}(x,-x\tan\theta_0)=0, \quad &&R_0\cos\theta_0<x<+\infty, \label{1-eq2.13}\\
&|\nabla\varphi(x,y)|>c_*,\quad &&(x,y)\in\Omega.\label{1-eq2.15}
\end{align}
\begin{definition}
A function $\varphi$ is called a solution to the problem \eqref{1-eq2.10}--\eqref{1-eq2.15} if $\varphi \in C^1(\overline{\Omega})$ with
$$
c_* \leq |\nabla \varphi(x, y)| \leq c^*, \quad (x, y) \in \Omega,
$$
and $\varphi$ satisfies \eqref{1-eq2.10} in the distribution sense and satisfies \eqref{1-eq2.11}--\eqref{1-eq2.15} pointwisely.
\end{definition}
\subsection{Mathematical formulation in the potential plane}\noindent
\par In the subsection, we formulate the problem in the potential plane. Assume that $P_0^-$ in physical coordinates
is transformed into the origin $(0,0)$ in the potential plane, and $P_0^+$ is transformed into $(0,m)$. Let the speed
of the flow at the upper wall be denoted by
$$
q(x,x\tan\theta_0)=\mathscr Q_{\mathrm{up}}(x),\quad R_0\cos\theta_0\leq x< +\infty.
$$
At the inlet, the stream function is
\begin{equation*}
  \psi\left(g(y),y\right)=\Psi_{\mathrm{in}}(y)=\int_{-R_0\sin\theta_0}^{y}\frac{c_*^{2/(\gamma-1)+1}}{\cos\Theta(y)}dy,\quad -R_0\sin\theta_0\leq y \leq R_0\sin\theta_0,
\end{equation*}
which satisfies $$\Psi_\mathrm{in}(-R_0\sin\theta_0)=0 \ \ {\rm{ and}}  \ \ \Psi_\mathrm{in}(R_0\sin\theta_0)=m.$$
Denote by $Y_{\mathrm{in}}$ the inverse function of $\Psi_\mathrm{in}$, i.e.
\begin{equation*}
 Y_\mathrm{in}(\psi)=\Psi_\mathrm{in}^{-1}(\psi),\quad 0\leq\psi\leq m.
\end{equation*}
The potential function at the upper wall is
\begin{equation*}
 \varphi\left(x,f(x)\right)=\Phi_{\mathrm{up}}(x)=\int_{R_0\cos\theta_0}^{x}\frac{\mathscr Q_{\mathrm{up}}(x)}{\cos\theta_0}dx,\quad R_0\cos\theta_0\leq x < +\infty,
\end{equation*}
which satisfies $$\Phi_\mathrm{up}(R_0\cos\theta_0)=0.$$
Denote by $X_{\mathrm{up}}$ the inverse function of $\Phi_\mathrm{up}$, i.e.
\begin{equation*}
 X_\mathrm{up}(\varphi)=\Phi_\mathrm{up}^{-1}(\varphi),\quad0\leq\varphi< +\infty.
\end{equation*}
Therefore, the problem in the potential plane can be formulated as
 \begin{align}
&Q_{\varphi\varphi}-(b(Q)Q_{\psi})_{\psi}=0,&&(\varphi,\psi)\in(0,+\infty)\times(0,m),\label{1-eq2.22}  \\
&Q(0,\psi)=0,&&\psi\in(0,m), \label{1-eq2.23}\\
&Q_\varphi(0,\psi)=
-\frac{1}{c_*\rho(c_*^2)}\left.\frac{\mathrm{d}}{\mathrm{d}y}\left(\sin\Theta(y)\right)\right|_{y=Y_{\mathrm{in}}(\psi)}   ,&&\psi\in(0,m), \label{1-eq2.24} \\
&Q_\psi(\varphi,0)=0,&&\varphi\in(0,+\infty),\label{1-eq2.25}\\
&Q_{\psi}(\varphi,m)=0,&&\varphi\in(0,+\infty),\label{1-eq2.26}
\end{align}
which is equivalent to
\begin{align}
&W_{\varphi}+b^{1/2}(Q)W_{\psi}=\frac{1}{4}b^{-1}(Q)p(Q)W(W+Z),&& (\varphi,\psi)\in(0,+\infty)\times(0,m),\label{1-eq2.28} \\
&Z_{\varphi}-b^{1/2}(Q)Z_{\psi}=-\frac{1}{4}b^{-1}(Q)p(Q)Z(W+Z),&& (\varphi,\psi)\in(0,+\infty)\times(0,m),\label{1-eq2.29} \\
&W(0,\psi)=W_0(\psi)=
-\frac{1}{c_*\rho(c_*^2)}\left.\frac{\mathrm{d}}{\mathrm{d}y}\left(\sin\Theta(y)\right)\right|_{y=Y_{\mathrm{in}}(\psi)},&&\psi\in(0,m), \label{1-eq2.30} \\
&Z(0,\psi)=Z_0(\psi)=
\frac{1}{c_*\rho(c_*^2)}\left.\frac{\mathrm{d}}{\mathrm{d}y}\left(\sin\Theta(y)\right)\right|_{y=Y_{\mathrm{in}}(\psi)}, &&\psi\in(0,m), \label{1-eq2.31} \\
&W(\varphi,0)+Z(\varphi,0)=0,&&\varphi\in(0,+\infty), \label{1-eq2.32} \\
&W(\varphi,m)+Z(\varphi,m)=0,&& \varphi\in(0,+\infty),\label{1-eq2.33} \\
&Q_{\varphi}(\varphi,\psi)=\frac{1}{2}\left(W(\varphi,\psi)-Z(\varphi,\psi)\right),&& (\varphi,\psi)\in(0,+\infty)\times(0,m),\label{1-eq2.34}\\
&Q(0,\psi)=0,&&\psi\in(0,m).\label{1-eq2.35}
\end{align}
\begin{definition}
A function $Q$ is said to be a weak solution to the problem \eqref{1-eq2.22}--\eqref{1-eq2.26}, if  $Q\in C_{\rm{loc}}^{0,1}([0, +\infty) \times [0, m])$ satisfies \eqref{1-eq2.23} and
$$\begin{aligned}
&\int_{0}^{+\infty}\int_{0}^{m}\left(Q_{\varphi}(\varphi,\psi)\xi_{\varphi}(\varphi,\psi)-b(Q(\varphi,\psi))Q_{\psi}(\varphi,\psi)\xi_{\psi}(\varphi,\psi)\right)d\varphi d\psi\\
&\quad-\int_{0}^{m}\frac{1}{c_*\rho(c_*^2)}\left.\frac{\mathrm{d}}{\mathrm{d}y}
\left(\sin\Theta(y)\right)\right|_{y=Y_{\mathrm{in}}(\psi)}\xi(0,\psi)d\psi=0
\end{aligned}$$
for any $ \xi \in C^{1}([0,+\infty)\times[0,m])$ with compact support.
\end{definition}
\begin{definition}
 A triad of functions $(W,Z,Q)$ is said to be a weak solution to the problem \eqref{1-eq2.28}--\eqref{1-eq2.35} if
 $W,Z\in L_{loc}^{\infty}([0,+\infty)\times[0,m])$ and  $Q\in C_{\rm{loc}}^{0,1}([0,+\infty)\times[0,m])$ satisfy the following properties.
    \begin{enumerate}[ \rm (i)]
       \item For any $\xi,\eta \in C^{1}([0,+\infty)\times[0,m])$ with compact support satisfying
           \begin{equation*}
       \begin{aligned}
      \xi(\varphi, 0) &= -\eta(\varphi, 0), \ \
     \xi(\varphi, m) = -\eta(\varphi, m), \ \ \varphi \in (0, +\infty), \\
      \xi(0, \psi) &= -\eta(0, \psi), \ \  \psi \in (0, m),
       \end{aligned}\end{equation*}
       it holds that \begin{equation*}
       \begin{aligned}
&\int_{0}^{+\infty}\int_{0}^{m}\bigg(W\xi_{\varphi}+b^{1/2}(Q)W\xi_{\psi}+\frac{1}{2}b^{-1/2}(Q)p(Q)Q_{\psi}W\xi
+\frac{1}{4}b^{-1}(Q)p(Q)W(W+Z)\xi\bigg)d\varphi d\psi \\
&\quad+\int_{0}^{+\infty}\int_{0}^{m}\bigg(Z\eta_{\varphi}-b^{1/2}(Q)Z\eta_{\psi}-\frac{1}{2}b^{-1/2}(Q)p(Q)Q_{\psi}Z\eta
-\frac{1}{4}b^{-1}(Q)p(Q)Z(W+Z)\eta\bigg)d\varphi d\psi \\
&\quad-2\int_0^{m}\frac{1}{c_*\rho(c_*^2)}\left.\frac{\mathrm{d}}{\mathrm{d}y}\left(\sin\Theta(y)\right)\right|_{y=Y_{\mathrm{in}}(\psi)}\xi(0,\psi)d\psi=0.
\end{aligned}\end{equation*}
\item \eqref{1-eq2.34} holds in the sense of distribution.
\item $Q$ satisfies \eqref{1-eq2.35}.
   \end{enumerate}
\end{definition}
\subsection{Background solutions and main results}\noindent
\par
 We first consider symmetric continuous sonic-supersonic flow. Set
$$
\hat{\Gamma}_{\mathrm{in}}: x=g_0(y)=\sqrt{R_0^2-y^2},\quad-R_0\sin\theta_0\leq y\leq R_0\sin\theta_0,
$$
and
$$\hat{\Theta}(y)=-\arctan g_0'(y)=\arctan\frac{y}{\sqrt{R_0^2-y^2}},
\quad-R_0\sin\theta_0\leq y\leq R_0\sin\theta_0.$$
 Then \eqref{1-eq2.22}--\eqref{1-eq2.26} can be reduced to the following boundary problem:
\begin{align}
&(A(\hat{q}))^{\prime\prime}(\varphi)=0,\label{1-eq2.37}\\
&(A(\hat{q}))^{\prime}(0)=-\frac{1}{\hat{q}(0)\rho(\hat{q}^{2}(0))R_{0}},\label{1-eq2.38}\\
&\hat{q}(0)=c_{*},\label{1-eq2.39}\\
&2\hat{q}(0)\rho(\hat{q}^{2}(0))R_{0}\theta_{0}=m.\label{1-eq2.40}
\end{align}
By solving  \eqref{1-eq2.37}--\eqref{1-eq2.40}, the solution can be represented as
\begin{equation}\label{1-eq2.41}
A(\hat{q}(\varphi))=-\frac{\varphi}{c_*^{1+2/(\gamma-1)}R_0},
\end{equation} which will be called the background solutions.
Furthermore, one can get from \eqref{1-eq2.41} that
\begin{equation*}
\hat{q}(\varphi)=A^{-1}\bigg(-\frac{\varphi}{c_*^{1+2/(\gamma-1)}R_0}\bigg).
\end{equation*}
\par
For the problem \eqref{1-eq2.22}--\eqref{1-eq2.26}, to guarantee the $C^2$ smoothness of solution near the corners $(0,0)$
and $(0,m)$, one should impose the compatibility conditions
$$
\frac{\mathrm{d}^2}{\mathrm{d}y^2} (\sin \Theta (\pm R_0 \sin \theta_0)) = 0,
$$
which is equivalent to
\begin{equation}\label{1-eq2.43}
\left.\left(\frac{g''(y)}{(1+(g'(y))^2)^{3/2}}\right)'\right|_{y = \pm R_0 \sin \theta_0} = 0.
\end{equation}
The main result  of this paper can be stated as follows.
\begin{theorem}
\label{1-th2.1}
Given   $g\in C^4([-R_0\sin\theta_0,R_0\sin\theta_0])$ satisfying \eqref{1-eq2.1} and \eqref{1-eq2.43}, define
\begin{align*}
 \sigma:=\bigg\|\frac{g^{\prime}(y)}{\sqrt{(g^{\prime}(y))^2+1}}-\frac{g_0^{\prime}(y)}
 {\sqrt{(g_0^{\prime}(y))^2+1}}\bigg\|_{C^3([-R_0\sin\theta_0,R_0\sin\theta_0])}.
 \end{align*}
 There exists a constant $\sigma_1^*>0$ depending only on $\gamma$ and $R_0$ such that if
\begin{align}\label{1-eq2.2}
 \sigma\leq{\sigma}_1^*,
\end{align}
then the problem \eqref{1-eq2.22}--\eqref{1-eq2.26} admits at least one weak solution $Q\in C_{\rm{loc}}^{1,1}([0,+\infty)\times[0,m])$ satisfying
\begin{eqnarray}
  -C_2\sigma\varphi\leq Q(\varphi, \psi) + \frac{\varphi}{c_*^{1+2/(\gamma-1)}R_0} \le C_1\sigma\varphi, \  \ (\varphi, \psi) \in (0, +\infty) \times (0, m),\\\label{1-eq2.44-a}
          -C_2\sigma\leq   Q_\varphi(\varphi, \psi) + \frac{1}{c_*^{1+2/(\gamma-1)}R_0} \le C_1\sigma, \ \
             (\varphi, \psi) \in (0, +\infty) \times (0, m),\label{1-eq2.44}
        \end{eqnarray}
    \begin{equation}\label{1-eq2.45}
        |Q_\psi(\varphi, \psi)| \le C_3\sigma\varphi, \quad  (\varphi, \psi) \in (0, 1] \times (0, m),
    \end{equation}
    \begin{equation}\label{1-eq2.451}
        |Q_\psi(\varphi, \psi)| \le C_4\sigma\varphi^{(\gamma+1)/2}, \quad (\varphi, \psi) \in (1, +\infty) \times (0, m),
    \end{equation}
 \begin{align}\label{1-eq2.46}
|Q_{\varphi\varphi}(\varphi,\psi)|\leq C_5\sigma\varphi^{1/2},\ \ |Q_{\varphi\psi}(\varphi,\psi)|\leq C_5\sigma,\ \ |Q_{\psi\psi}(\varphi,\psi)|\leq C_5\sigma\varphi,  \ \
(\varphi,\psi)\in(0,1)\times(0,m),
\end{align}
and
\begin{align}\label{1-eq2.461}
|Q_{\varphi\varphi}(\varphi,\psi)|\leq C_6\sigma\varphi^{-1},\  |Q_{\varphi\psi}(\varphi,\psi)|\leq C_6\sigma\varphi^{(\gamma-1)/2},\ |Q_{\psi\psi}(\varphi,\psi)|\leq C_6\sigma\varphi^{\gamma},  \
(\varphi,\psi)\in(1,+\infty)\times(0,m).
\end{align}
Here  $C_i$ $(i=1,\ldots,6)$ $(C_1\leq C_2)$ are positive constants depending only on $\gamma$ and $R_0$.
\end{theorem}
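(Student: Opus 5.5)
We work with the first-order system \eqref{1-eq2.28}--\eqref{1-eq2.35} for $(W,Z,Q)$ and split the strip $(0,+\infty)\times(0,m)$ into a thin layer $(0,\varphi_0]\times[0,m]$ near the sonic inlet and the far region $[\varphi_0,+\infty)\times[0,m]$. Write $\hat Q(\varphi)=-\kappa\varphi$ with $\kappa=1/(c_*^{1+2/(\gamma-1)}R_0)$. The background has $\hat W=-\kappa$, $\hat Z=\kappa$, and $\hat W+\hat Z=0$. The initial data satisfy $W_0+Z_0\equiv 0$ and $\|W_0+\kappa\|_{C^2}\le C\sigma$, where $C^3$ control of $\sin\Theta$ gives $C^2$ control of $W_0$ after composing with $Y_{\rm in}$. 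The compatibility condition \eqref{1-eq2.43} makes $W_0'$ vanish at $\psi=0,m$, which matches the reflection conditions \eqref{1-eq2.32}--\eqref{1-eq2.33} at the corners.

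The degeneracy sits in the coefficients. Near $Q=0^-$ we have $b(Q)\sim c|Q|$ and $p(Q)\sim c|Q|^{-1}$, hence $b^{-1}p\sim |Q|^{-2}$. Since $Q\approx-\kappa\varphi$, the characteristic speeds $\pm b^{1/2}$ behave like $\varphi^{1/2}$. Each characteristic $d\psi/d\varphi=\pm b^{1/2}(Q)$ leaving $(0,\psi_0)$ is therefore $\psi=\psi_0\pm c\varphi^{3/2}+\dots$, giving two distinct directions at each interior sonic point. The source $\tfrac14 b^{-1}p\,W(W+Z)$ is of size $\varphi^{-2}|W+Z|$, so it is harmless only if $W+Z=O(\varphi^{2})$ or so.

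The first main step, which I expect to be the main obstacle, is to derive the precise asymptotics of $W+Z$ and of $Q_\psi=-(W+Z)/(2b^{1/2})$ near $\varphi=0$. Adding the two transport equations, $W+Z$ satisfies a transport-type relation. Along characteristics, the quantities $W+\kappa$ and $Z-\kappa$ change only by $\int \varphi^{-2}|W+Z|$, while $W(\varphi,\psi)+Z(\varphi,\psi)\approx W_0(\psi_-)+Z_0(\psi_+)=W_0(\psi_-)-W_0(\psi_+)$, where $\psi_\pm$ are the two feet, separated by $O(\varphi^{3/2})$. This gives $|W+Z|\lesssim\sigma\varphi^{3/2}$, hence $|Q_\psi|\lesssim\sigma\varphi$, consistent with \eqref{1-eq2.45}.

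I would set up an iteration in a weighted space: $\mathcal{K}=\{(W,Z): |W+\kappa|,|Z-\kappa|\le M\sigma,\ |W+Z|\le M\sigma\varphi^{3/2},\ |\partial_\psi W|,|\partial_\psi Z|\le M\sigma\}$. Given $\tilde Q$ built from $(\tilde W,\tilde Z)$ by integrating \eqref{1-eq2.34} from \eqref{1-eq2.35}, solve the linear system along the frozen characteristics with reflection at $\psi=0,m$. The $\varphi^{-2}$ weight is then exactly compensated by $\varphi^{3/2}$ times the extra factor gained from the cancellation, so the source integral is $\int_0^\varphi s^{-1/2}\,ds<\infty$. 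Closing the estimates requires choosing $\varphi_0$ small and then $\sigma$ small. A key subpoint is estimating the characteristic separation using $C^1$ control of $b^{1/2}(\tilde Q)$ in $\psi$. Contraction fails at the degenerate line, so I would use compactness (Arzelà--Ascoli on the Lipschitz bounds) plus Schauder's fixed point theorem. This yields only existence, matching "at least one weak solution".

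The second step is the global extension on $[\varphi_0,+\infty)$. There the system is strictly hyperbolic, and the background solution is an exact solution with $W+Z=0$. For large $|Q|$ the vacuum asymptotics give $b(Q)\sim|Q|^{-(\gamma+1)}$ power rates and $p/b$ decaying rates. I would propagate the invariant $|W+\kappa|,|Z-\kappa|\le C\sigma$ by a continuity argument along characteristics. The decay of $b^{-1}p$, combined with $W+Z$ being small, keeps the Riccati-type source integrable, and the reflections at the walls preserve the bounds. This yields \eqref{1-eq2.44}. Integrating gives the $Q$ bound, and dividing $W+Z$ by $b^{1/2}(Q)\sim\varphi^{-(\gamma+1)/2}$ gives \eqref{1-eq2.451}.

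Finally, the second derivatives \eqref{1-eq2.46}--\eqref{1-eq2.461} follow by differentiating the system in $\psi$ and estimating along characteristics in the same way. Then $Q_{\varphi\varphi}$ is recovered from the equations and $Q_{\psi\psi}$ from $(W+Z)_\psi$. Passing to the limit in the weak formulation gives a weak solution $Q\in C^{1,1}_{\rm loc}$.
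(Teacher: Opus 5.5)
Your architecture matches the paper's: freeze $\tilde Q$, solve the linear problem by characteristics with reflections at $\psi=0,m$, close by Schauder via Lipschitz compactness, then continue into the far field and differentiate. However, the near-sonic analysis is built on the wrong degeneracy.

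In this formulation $b$ does not vanish at the sonic state. The factor $(\frac{\gamma+1}{2}q^2-1)^{-1}$ in $b$ blows up at $q=c_*$, and by \eqref{1-eq2.91} $b(Q)\sim\hat\mu^2(-Q)^{-1/2}$. Moreover $p=b'$, which is exactly what turns $Q_{\varphi\varphi}=(b(Q)Q_\psi)_\psi$ into the $W,Z$ system. Hence $p\sim\frac12\hat\mu^2(-Q)^{-3/2}$ and $b^{-1}p\sim\frac12(-Q)^{-1}$. Your pair $b\sim c|Q|$, $p\sim c|Q|^{-1}$ is not even consistent with $p=b'$.

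Consequences:
\begin{itemize}
\item The characteristic speeds $\pm b^{1/2}\sim\pm\hat\mu(\kappa\varphi)^{-1/4}$ blow up.
\item Characteristics from $(0,\psi_0)$ are $\psi=\psi_0\pm\frac43\hat\mu\kappa^{-1/4}\varphi^{3/4}+\dots$, tangent to the sonic line in opposite directions.
\item The two feet through $(\varphi,\psi)$ are $2\delta$ apart with $\delta\approx\frac43\hat\mu\kappa^{-1/4}\varphi^{3/4}$, not $O(\varphi^{3/2})$.
\end{itemize}
Thus $W+Z\approx-2\delta W_0'(\psi)$ is of exact order $\sigma\varphi^{3/4}$ wherever $W_0'\neq0$. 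Your set $\mathcal K$ with $|W+Z|\le M\sigma\varphi^{3/2}$ therefore excludes the solution and is not mapped into itself, so the fixed-point step fails as set up. It must be recalibrated, as in the paper, to $|W+Z|\lesssim\sigma\varphi^{3/4}$. This still suffices: the source is $O(\varphi^{-1}\cdot\sigma\varphi^{3/4})$, which is integrable, and $|Q_\psi|=\frac12 b^{-1/2}|W+Z|\lesssim\sigma\varphi$.

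The same error hides a missing idea in your $C^{1,1}$ step. We have $Q_{\varphi\varphi}=-\frac12 b^{1/2}(W+Z)_\psi+\frac18 b^{-1}p(W+Z)^2$ and $Q_{\psi\psi}=-\frac12 b^{-1/2}(W+Z)_\psi+\dots$. With $b^{1/2}\sim\varphi^{-1/4}$, the bounds $|W_\psi|,|Z_\psi|\le M\sigma$ give only $|Q_{\varphi\varphi}|\lesssim\sigma\varphi^{-1/4}$ (not even bounded) and $|Q_{\psi\psi}|\lesssim\sigma\varphi^{1/4}$. You need the cancellation $|(W+Z)_\psi|\lesssim\sigma\varphi^{3/4}$. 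The paper obtains this as $|U_s-V_s|\lesssim\sigma\varphi^{3/4}$ for $U_s=b^{-1/2}W_\varphi$, $V_s=b^{-1/2}Z_\varphi$, by rerunning the foot-separation argument on the differentiated system. This is where the third derivatives in $\sigma$ (i.e.\ $W_0''$) enter.

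In the far field your stated mechanism also fails. By \eqref{1-eq2.93}, $b^{-1}p\sim(\gamma+1)/|Q|\sim(\gamma+1)/(\kappa\varphi)$, which is not integrable on $[1,+\infty)$, and you only control $W+Z=O(\sigma)$. A Gronwall or continuity argument then sees $\int^\Phi\sigma\varphi^{-1}\,d\varphi\sim\sigma\log\Phi$ and yields a bound that deteriorates with $\Phi$, not the uniform bound \eqref{1-eq2.44}. Integrability would require decay of $W+Z$, which you do not establish and which would in any case come from the mechanism below.

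What actually works is the sign structure of the Riccati terms. Along positive characteristics $(-W)'=\frac14 b^{-1}p(-W)\bigl(Z-(-W)\bigr)$, and along negative ones $Z'=\frac14 b^{-1}pZ\bigl((-W)-Z\bigr)$. So $-W$ and $Z$ relax toward each other, and together with $W+Z=0$ on the walls, the box for $-W,Z$ given by the data at $\varphi=1$ is invariant. This is the paper's contradiction argument along characteristics, and it needs no integrability.

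Likewise, the rates in \eqref{1-eq2.461} come from the damping $R_l=b^{-1}p\tilde Q_\varphi\le-M_{12}\varphi^{-1}$, with $M_{12}\ge\frac{3\gamma+5}{4}>\frac{\gamma+3}{2}$, in the equation for $b^{1/2}W_\varphi$. Estimating ``in the same way'' without using this sign gives only polynomial growth, not the stated decay of $Q_{\varphi\varphi}$.
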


The following theorem shows the uniqueness of weak solutions.
\begin{theorem}
\label{1-th2.2}
Given   $g\in C^4([-R_0\sin\theta_0,R_0\sin\theta_0])$ satisfying \eqref{1-eq2.1} and \eqref{1-eq2.43}. There exists a constant $\sigma_2^*\in(0,\sigma_1^*]$ depending only on $\gamma$, $m$ and $R_0$ such that if
\begin{align}\label{1-eq2.2fu}
 \sigma\leq \sigma_2^*,
\end{align} then the problem \eqref{1-eq2.22}--\eqref{1-eq2.26} admits at most one weak solution $Q\in C_{\rm{loc}}^{0,1}([0,+\infty)\times[0,m])$ satisfying \eqref{1-eq2.44}--\eqref{1-eq2.451} with any given positive constants $C_i $ $(i=1,2,3,4)$.
\end{theorem}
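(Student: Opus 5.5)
Suppose $Q_1$ and $Q_2$ are two weak solutions in $C^{0,1}_{\rm loc}([0,+\infty)\times[0,m])$, both satisfying \eqref{1-eq2.44}--\eqref{1-eq2.451}. I would prove $Q_1\equiv Q_2$ by an energy estimate for the difference $R=Q_1-Q_2$, carried out on strips $(0,T)\times(0,m)$, with $T>0$ arbitrary. Subtracting the two weak formulations cancels the inlet term, since both solutions see the same data $W_0$. This leaves, for every admissible test function $\xi$,
\[
\int_0^{+\infty}\!\!\int_0^m\Big(R_\varphi\xi_\varphi-\big(b(Q_1)R_\psi+\big(b(Q_1)-b(Q_2)\big)Q_{2\psi}\big)\xi_\psi\Big)\,d\varphi\, d\psi=0 .
\]
The difference also satisfies $R(0,\psi)=0$, and $R_\varphi(0,\cdot)=0$ in the weak sense. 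The bounds \eqref{1-eq2.44} give $Q_i\le-\kappa\varphi$ with $\kappa=(c_*^{1+2/(\gamma-1)}R_0)^{-1}-C_2\sigma>0$. Hence on $(0,T)$ we have $b(Q_i)\sim|Q_i|^{\alpha}$: near the sonic state, $b(s)\approx c|s|^{1/2}\cdot$const, since $A(q)-A(c_*)\sim -(q-c_*)^2$ and $b\sim (q-c_*)^{-1}$. The exact exponent is not needed; only $b(Q_i)\ge c\varphi^{\beta}$ for some $\beta$, and $|b'(Q)|\le C b(Q)/|Q|$.

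The natural test function is $\xi=(T-\varphi)_+R_\varphi$, i.e.\ a Holmgren/energy multiplier of Friedrichs type, which is justified by mollification since $R$ is only Lipschitz. A cleaner route, which I would actually implement, is to test with $\xi(\varphi,\psi)=\int_\varphi^{T}R(s,\psi)\,ds\cdot\chi$. This is the classical Ladyzhenskaya trick for uniqueness of weak solutions of wave equations, and it requires only $R\in H^1$. Set $v(\varphi,\psi)=\int_\varphi^{T}R\,ds$ for $\varphi<T$, so that $v_\varphi=-R$. The principal part then produces
\[
\tfrac12\int_0^m R^2(T,\psi)\,d\psi\;+\;\tfrac12\int_0^m b(Q_1(0,\psi))\,v_\psi^2(0,\psi)\,d\psi ,
\]
plus an error $\tfrac12\iint \p_\varphi b(Q_1)\,v_\psi^2$. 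Here $b(Q_1(0,\cdot))=b(0)=0$, so the degeneracy costs nothing at the inlet. Moreover $\p_\varphi b(Q_1)=b'(Q_1)Q_{1\varphi}\ge0$, because $b$ is increasing toward the sonic state ($b'<0$ for $s<0$) and $Q_\varphi<0$. So this term has a favourable sign and can be discarded or absorbed.

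The remaining terms come from the nonlinear difference $(b(Q_1)-b(Q_2))Q_{2\psi}=\tilde b'\,R\,Q_{2\psi}$. By \eqref{1-eq2.45}--\eqref{1-eq2.451}, $|Q_{2\psi}|\le C\sigma\varphi$ near the inlet. The difficulty is that $\tilde b'$ is evaluated at values between $Q_1$ and $Q_2$, and it is singular like $\varphi^{\beta-1}$ as $\varphi\to0$. The factor $\varphi$ supplied by \eqref{1-eq2.45} compensates this singularity, so that $|\tilde b' Q_{2\psi}|\le C\sigma\, b(Q_1)^{1/2}\cdot(\text{bounded})$. Cauchy--Schwarz then bounds the term by $C\sigma\iint (R^2+b(Q_1)v_\psi^2)$. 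Combined with a Gronwall argument in $T$, applied to $E(T)=\int_0^m R^2(T,\cdot)$ together with the weighted $v_\psi$ term, this gives $E\equiv0$. Smallness of $\sigma$ (this is the origin of $\sigma_2^*$ and its dependence on $m$ through the strip width) is needed to close the Gronwall step near $\varphi=0$, where the weight of the energy vanishes.

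I expect the main obstacle to be exactly this compensation near the sonic line: showing that every term involving $b'$ is dominated by the degenerate energy $b(Q_1)v_\psi^2$ with constants integrable in $\varphi$ near $0$. If this fails, the fallback is a duality argument in the characteristic variables $(W,Z)$. That route is more delicate, because it requires solving the adjoint linear problem, but it has the same structure.
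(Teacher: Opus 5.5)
\textbf{There is a genuine gap: the degeneracy is misidentified, and this breaks the energy identity.} By \eqref{1-eq2.91}, $b(s)\to+\infty$ as $s\to0^-$, with $b(s)\sim\hat{\mu}^2(-s)^{-1/2}$. Your own premises $A(q)-A(c_*)\sim-(q-c_*)^2$ and $b\sim(q-c_*)^{-1}$ also give the exponent $-1/2$, not $+1/2$. Moreover $b'=p>0$. Hence $b(Q_i)\sim\varphi^{-1/2}$, and $\partial_\varphi b(Q_1)=p(Q_1)Q_{1\varphi}$ is \emph{negative}, of size $\varphi^{-3/2}$.

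Now look at your Ladyzhenskaya identity,
$\frac12\int_0^m R^2(T,\cdot)+\frac12\int_0^m b(Q_1(0,\cdot))\,v_\psi^2(0,\cdot)+\frac12\iint\partial_\varphi b(Q_1)\,v_\psi^2=(\text{nonlinear terms})$.
\begin{itemize}
\item The inlet term does not vanish; it diverges. It is $\lim_{\varepsilon\to0}\frac12\int_0^m b(Q_1(\varepsilon,\cdot))v_\psi^2(\varepsilon,\cdot)$, and $v_\psi(0,\psi)=\int_0^T R_\psi\,d\varphi$ is in general nonzero.
\item The interior term has the unfavourable sign and a non-integrable weight.
\item These two divergences cancel each other. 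What survives has neither a sign nor a coercive part, so the Gronwall step has nothing to close on.
\item The compensation count is also off: $h\sim\varphi^{-3/2}$ times $|Q_{2\psi}|\le C\sigma\varphi$ is $\sim\sigma\varphi^{-1/2}\sim\sigma b$, not $\sigma b^{1/2}$.
\end{itemize}
The time-integrated multiplier only benefits from a coefficient that increases in $\varphi$. Here $b(Q)$ decreases.

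\textbf{What the paper does instead.} Because $b(Q)$ decreases in $\varphi$, the favourable sign belongs to the Friedrichs multiplier. The paper tests the difference $R=\tilde Q-\hat Q$ with $\varphi^{-1/4}R_\varphi$ on $(0,\phi)\times(0,m)$. This produces the following.
\begin{itemize}
\item A coercive term $-\frac12\iint(\varphi^{-1/4}b(\tilde Q))_\varphi R_\psi^2\gtrsim\iint\varphi^{-7/4}R_\psi^2$.
\item From the weight, a second coercive term $\frac18\iint\varphi^{-5/4}R_\varphi^2$.
\item The $b$-boundary term at $\varphi=0$ vanishes, since $\varphi^{-1/4}b\,R_\psi^2\lesssim\varphi^{5/4}$.
\end{itemize}
These singular-weight coercive terms absorb the error terms carrying $h\sim\varphi^{-3/2}$ and $h_\varphi\sim\varphi^{-5/2}$. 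For this, $\sup_\psi R^2$ is first controlled through $R(0,\cdot)=0$ as in \eqref{1-eq4.3}; this Poincar\'e-type step is where $m$ enters $\sigma_2^*$.

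The price is that the nonlinear term $(b(\tilde Q)-b(\hat Q))\tilde Q_\psi$, paired with $R_{\varphi\psi}$, must be integrated by parts in $\varphi$. That step uses the second-derivative bound $|\tilde Q_{\varphi\psi}|\le\beta_5\sigma$. This is why the paper compares an arbitrary Lipschitz weak solution with the constructed $C^{1,1}$ solution of Theorem \ref{1-th3.2}, rather than two arbitrary Lipschitz solutions. It treats $\varphi\ge1$ separately by strict hyperbolicity.
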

In terms of the physical variables, the above theorems can be transformed as follows.
\begin{theorem}
\label{1-th2.1*}
Under the assumptions of Theorem \ref{1-th2.2}, the  problem \eqref{1-eq2.10}--\eqref{1-eq2.15} admits a unique smooth solution $\varphi \in C^{2,1}(\overline{\Omega})$, which satisfies \eqref{1-eq2.44}--\eqref{1-eq2.461} for $Q = A(|\nabla \varphi|)$ in $\Omega$.
\end{theorem}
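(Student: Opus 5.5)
Theorem \ref{1-th2.1*} follows from Theorems \ref{1-th2.1} and \ref{1-th2.2} by transporting the solution $Q$ of \eqref{1-eq2.22}--\eqref{1-eq2.26} back to the physical plane. Since $\sigma\le\sigma_2^*\le\sigma_1^*$, Theorem \ref{1-th2.1} supplies a weak solution $Q\in C^{1,1}_{\rm loc}([0,+\infty)\times[0,m])$ satisfying \eqref{1-eq2.44}--\eqref{1-eq2.461}. By \eqref{1-eq2.44-a} we have $Q<0$ for $\varphi>0$, so $q=A_+^{-1}(Q)>c_*$ there, with $q=c_*$ on $\varphi=0$; for $\sigma$ small $q$ also stays bounded away from $c^*$ on compact sets. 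From $Q_\varphi,Q_\psi$ I recover $\theta$ through the Chaplygin system \eqref{1-eq2.6}: take $\theta_\psi=-A'(q)q_\varphi=-Q_\varphi$ and $\theta_\varphi=\rho q^{-1}q_\psi=\rho q^{-1}B'(q)^{-1}(B(q))_\psi$. The compatibility $\theta_{\psi\varphi}=\theta_{\varphi\psi}$ is exactly the weak equation \eqref{1-eq2.22}, so $\theta$ is defined by a line integral, normalized by $\theta(0,\psi)=\Theta(Y_{\rm in}(\psi))$. The boundary conditions \eqref{1-eq2.25}--\eqref{1-eq2.26} then force $\theta\equiv\pm\theta_0$ on $\psi=0,m$, and \eqref{1-eq2.24} is consistent with the inlet data.

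Next I define the inverse map
\[
x_\varphi=\frac{\cos\theta}{q},\quad y_\varphi=\frac{\sin\theta}{q},\quad x_\psi=-\frac{\sin\theta}{\rho q},\quad y_\psi=\frac{\cos\theta}{\rho q}.
\]
Its integrability conditions are again equivalent to \eqref{1-eq2.6}. I start it from $(x,y)(0,\psi)=(g(Y_{\rm in}(\psi)),Y_{\rm in}(\psi))$; the definition of $\Psi_{\rm in}$ shows this matches the $\psi$-derivatives on $\varphi=0$. The Jacobian is $1/(\rho q^2)>0$, and $\theta$ is monotone in $\psi$ with values in $[-\theta_0,\theta_0]$, since $\theta_\psi=-Q_\varphi\approx (c_*^{1+2/(\gamma-1)}R_0)^{-1}>0$. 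Hence the map is a local diffeomorphism, and for each fixed $\varphi$ the curve $\psi\mapsto(x,y)$ is a graph over the angle.

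The lines $\psi=0,m$ are straight rays on the walls, because $\theta$ is constant there and they start at $P_0^\pm$. Using \eqref{1-eq2.44}, the map is a small perturbation of the background polar map $(\varphi,\psi)\mapsto(r(\varphi),\psi/(c_*\rho_* r))$, where $r$ grows to $+\infty$ with $\varphi$. This perturbation argument gives injectivity and shows the image is exactly $\Omega$. Setting $\varphi(x,y)$ to be the inverse coordinate yields \eqref{1-eq2.10}--\eqref{1-eq2.15}.

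For regularity, $Q\in C^{1,1}$ gives $q,\theta\in C^{1,1}$, so $\nabla\varphi=q(\cos\theta,\sin\theta)\in C^{1,1}$ and $\varphi\in C^{2,1}$. The estimates \eqref{1-eq2.44}--\eqref{1-eq2.461} are inherited because $Q=A(|\nabla\varphi|)$ composed with the diffeomorphism is the same function. For uniqueness, I reverse the construction: any solution $\varphi\in C^1(\overline\Omega)$ of \eqref{1-eq2.10}--\eqref{1-eq2.15} yields, via $(\varphi,\psi)$ coordinates, a weak solution $Q$ of \eqref{1-eq2.22}--\eqref{1-eq2.26}. The uniqueness class in Theorem \ref{1-th2.2} requires \eqref{1-eq2.44}--\eqref{1-eq2.451}, which the solution in Theorem \ref{1-th2.1*} is asserted to satisfy. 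Theorem \ref{1-th2.2} then applies, and injectivity of the coordinate change transfers uniqueness back to $\varphi$.

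The main obstacle I expect is the global injectivity of the inverse map and the identification of its image with the unbounded region $\Omega$. The difficulty is that the perturbation estimates \eqref{1-eq2.451} and \eqref{1-eq2.461} grow in $\varphi$. To control this I would use the angle bounds $|\theta-\hat\theta|\lesssim\sigma$, obtained by integrating $\theta_\psi$, together with the monotonicity of $x\cos\theta_0+\ldots$ along the streamlines.
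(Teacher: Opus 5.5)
Your route is the change of variables the paper has in mind: recover $\theta$ from the Chaplygin system \eqref{1-eq2.6}, whose compatibility is exactly \eqref{1-eq2.22}; integrate $z=x+iy$ from the inlet via $z_\varphi=e^{i\theta}/q$, $z_\psi=ie^{i\theta}/(\rho q)$; and pull uniqueness back through Theorem \ref{1-th2.2}. The paper itself gives no argument beyond asserting that the two formulations are equivalent.

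The step that fails is ``$Q\in C^{1,1}$ gives $q,\theta\in C^{1,1}$'' near the sonic inlet. Since
\[
A'(q)=\frac{1-\frac{\gamma+1}{2}q^2}{q}\Big(1-\frac{\gamma-1}{2}q^2\Big)^{-\gamma/(\gamma-1)},
\]
you have $A'(c_*)=0\neq A''(c_*)$. So $A_+^{-1}(s)-c_*\sim\kappa\sqrt{-s}$ as $s\to0^-$ for some $\kappa>0$. With $Q\approx-\varphi/(c_*^{1+2/(\gamma-1)}R_0)$ this gives $q-c_*\sim C\sqrt{\varphi}$, so $q_\varphi$ blows up at $\varphi=0$. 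Similarly $\theta_\varphi=-b(Q)Q_\psi=O(\sigma\varphi^{1/2})$. However, $\theta_{\varphi\varphi}=-p(Q)Q_\varphi Q_\psi-b(Q)Q_{\varphi\psi}$ (recall $p=b'$) is only $O(\sigma\varphi^{-1/2})$ by \eqref{1-eq2.91}--\eqref{1-eq2.9}.

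Since $(\varphi,\psi)\mapsto(x,y)$ is bi-Lipschitz near the inlet, $\nabla\varphi=q(\cos\theta,\sin\theta)$ is only H\"older-$\frac12$ up to $\Gamma_{\mathrm{in}}$. Hence neither $C^{2,1}(\overline{\Omega})$ nor even $C^{1,1}(\overline{\Omega})$ follows. No finer argument repairs this. For the background flow, $\rho(q^2)qr=c_*\rho(c_*^2)R_0$ and $\frac{d}{dq}\big(\rho(q^2)q\big)=0$ at $q=c_*$ give $q(r)-c_*\sim C\sqrt{r-R_0}$, so $\varphi''(r)\to+\infty$ at $r=R_0$.

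What your construction does prove is weaker. You get a solution $\varphi\in C^1(\overline{\Omega})$ in the sense of the definition, with $Q=A(|\nabla\varphi|)$ obeying the estimates of Theorem \ref{1-th2.1}. You also get $\varphi\in C^{2,1}$ locally on $\overline{\Omega}\setminus\Gamma_{\mathrm{in}}$: there $A_+^{-1}$ and $b$ are smooth, $q,\theta\in C^{1,1}$, the map $z$ is $C^{2,1}$ with Jacobian $1/(\rho q^2)>0$, and your bootstrap is valid. The regularity conclusion of the statement must be weakened accordingly; the paper's one-line transfer does not address this either.

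A smaller issue is your injectivity sketch. Monotonicity of $\theta$ in $\psi$ does not make a level curve of $\varphi$ a graph over the polar angle $\omega=\arg z$. Along such a curve $\omega_\psi=\cos(\theta-\omega)/(\rho q|z|)$, so you need $|\theta-\omega|<\pi/2$, which monotonicity of $\theta$ alone does not give. It does follow from a uniform bound on the turning of streamlines:
\begin{itemize}
\item on $(0,1)$, $|\theta_\varphi|=b(Q)|Q_\psi|\lesssim\sigma\varphi^{1/2}$;
\item on $(1,+\infty)$, by \eqref{1-eq2.92} and \eqref{1-eq2.451}, $|\theta_\varphi|\lesssim\sigma\varphi^{-(\gamma+1)/2}$, which is integrable because $\gamma>1$.
\end{itemize}
So the growth of $Q_\psi$ is offset by the decay of $b$. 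Hence $\theta=\Theta(Y_{\mathrm{in}}(\psi))+O(\sigma)$ uniformly. Since the inlet is $O(\sigma)$-close to the arc, also $\arg z(0,\psi)=\Theta(Y_{\mathrm{in}}(\psi))+O(\sigma)$, so $|\theta-\omega|=O(\sigma)$ everywhere. Combine this with $x_\varphi=\cos\theta/q>0$, properness as $\varphi\to+\infty$, and the boundary correspondence you already checked. A covering argument then gives a bijection of $[0,+\infty)\times[0,m]$ onto $\overline{\Omega}$.
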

\subsection{Key ideas for the proof}\label{pr}\noindent
\par  Now we give the key ideas for the proof of the main results in
this paper. The flow is sonic  with the velocity normal to the inlet, and becomes supersonic within the nozzle. Such a sonic-supersonic model is governed by a quasilinear
nonstrictly hyperbolic equation with degeneracy at the inlet. For the background solution, the sonic curve coincides with the inlet, which is a circular arc of radius \(R_0\). Thus the degeneracy is weak in the sense that  there exist exactly two distinct characteristics from
each sonic point, except for those on the walls, pointing into the supersonic region.  Consequently, it was shown in \cite{WX16}  that the Cauchy problem for the smooth supersonic continuation from such a sonic curve is well-posed, which is in contrast to the cases considered \cite{WX19,W22}. The degeneracy at the sonic curve in \cite{WX19,W22} is strong in the sense that  all characteristics from the sonic points coincide with the sonic curve and never approach
the supersonic region. 
\par In order to establish the structural stability of radially symmetric sonic-supersonic flow problem under the perturbation of the inlet  in the potential plane, we divide the problem into two regions: the region near the sonic state ($\varphi\in[0,1]$) and the region away from the sonic state ($\varphi\in [1,+\infty)$). This decomposition is motivated by the different asymptotic behaviors of the flow in these two regions. In particular, it is essential to characterize both the singular behavior near the sonic curve ($\varphi=0$) and the decay behavior as $\varphi\to+\infty$  to establish uniform estimates.

\par Firstly, for \(\varphi \in [0, 1]\), the primary challenge lies in the singularity and degeneracy of the governing equations near the sonic curve. To resolve this sonic-supersonic flow problem, the key ingredient is to determine the precise asymptotic behavior of the flow speed near the sonic boundary. More precisely, the existence of a local sonic-supersonic flow is established by the following three steps:
\begin{enumerate}[\rm (i)]
 \item We first derive the asymptotic behavior of $b$, $p$, $p'$ and their combinations near the sonic curve directly from their explicit expressions. These properties  determine the behavior of the coefficients  in the characteristic equations and provide the precise pointwise estimates required to construct a suitable iteration set around the background flow. For any given function in this set, we establish the well-posedness and a priori estimates for the corresponding linearized problem via the method of characteristics. The local existence of a $C^{0,1}$ sonic-supersonic solution then follows from the Schauder fixed-point theorem.

 \item  To improve the regularity of the solution, we introduce suitable weighted variables involving $W_\varphi$ and $Z_\varphi$ and derive their governing characteristic equations. Combining the method of characteristics with the established asymptotic properties, we obtain precise estimates  for $Q_{\varphi\varphi}$, $Q_{\varphi\psi}$, and $Q_{\psi\psi}$. These estimates yield the desired $C^{1,1}$ regularity for the local sonic-supersonic flow.
\end{enumerate}

\par Secondly, for \(\varphi \in [1, +\infty)\), the governing equation becomes strictly hyperbolic. The local sonic-supersonic solution  provides the initial data at \(\varphi = 1\). We first verify that these initial data satisfy the matching condition established in \cite{WX15}. This condition guarantees that the flow remains away from both the sonic and vacuum states, thereby preventing shock formation in the supersonic region. By the global existence theory  developed in \cite{WX15}, the local solution can be extended to the entire supersonic region. It then remains to establish global regularity, which is achieved through the following two steps:
 \begin{enumerate}[ \rm (i)]
 \item We first establish uniform bounds for \(W\) and \(Z\) by combining a contradiction argument with the method of characteristics. These estimates yield the global \(C^{0,1}\) estimate for \(Q\).
 \item  We then utilize the asymptotic behavior of \(b\), \(p\), \(p'\) and their combinations as $\varphi\to+\infty$, together with the  obtained  $C^{0,1}$ estimates, to derive uniform estimates for suitable weighted variables involving $ W_\varphi$ and $ Z_\varphi$  by the method of characteristics. These estimates then yield the global $C^{1,1}$ estimates for $Q$.
\end{enumerate}
\par
 This paper is arranged as follows.  In Section \ref{2se}, we investigate the sonic-supersonic flow near the sonic curve and establish the existence and $C^{1,1}$ regularity of local solutions. In Section \ref{3glo}, we extend the local solution to the entire supersonic region and establish the global $C^{0,1}$ and $C^{1,1}$ estimates. Finally, in Section \ref{un}, we establish the uniqueness of the solution.

\section{Existence of sonic-supersonic flows near sonic curves}\label{2se}\noindent
\par Firstly, for $\varphi \in[0,1]$, the problem \eqref{1-eq2.22}--\eqref{1-eq2.26} can be formulated as
\begin{align}
&Q_{\varphi\varphi}-(b(Q)Q_{\psi})_{\psi}=0,&&(\varphi,\psi)\in(0,1)\times(0,m),\label{1-eq2.221}  \\
&Q(0,\psi)=0,&&\psi\in(0,m), \label{1-eq2.222}\\
&Q_\varphi(0,\psi)=
-\frac{1}{c_*\rho(c_*^2)}\left.\frac{\mathrm{d}}{\mathrm{d}y}\left(\sin\Theta(y)\right)\right|_{y=Y_{\mathrm{in}}(\psi)}   ,&&\psi\in(0,m), \label{1-eq2.223} \\
&Q_\psi(\varphi,0)=0,&&\varphi\in(0,1),\label{1-eq2.224}\\
&Q_{\psi}(\varphi,m)=0,&&\varphi\in(0,1),\label{1-eq2.225}
\end{align}
 which will be solved by the   iteration method. More precisely, for a given $\tilde Q$ belonging to some suitable set, we will solve the following problem:
\begin{align}
&Q_{\varphi\varphi}-(b({\tilde Q})Q_{\psi})_{\psi}=0,&&(\varphi,\psi)\in(0,1)\times(0,m),\label{1-eq3.2}\\
&Q(0,\psi)=0,&&\psi\in(0,m), \label{1-eq3.3}\\
&Q_\varphi(0,\psi)=
-\frac{1}{c_*\rho(c_*^2)}\left.\frac{\mathrm{d}}{\mathrm{d}y}\left(\sin\Theta(y)\right)\right|_{y=Y_{\mathrm{in}}(\psi)},&&\psi\in(0,m), \label{1-eq3.4}\\
&Q_\psi(\varphi,0)=0,&&\varphi\in(0,1),\label{1-eq3.5}\\
&Q_{\psi}(\varphi,m)=0,&&\varphi\in(0,1).\label{1-eq3.6}
\end{align}
After obtaining the unique solution $Q$  of problem \eqref{1-eq3.2}--\eqref{1-eq3.6}, one can define a mapping as
$$
J(\tilde{Q})=Q(\cdot,\cdot).
$$
A fixed point of the mapping $J$ is a desired solution to the problem \eqref{1-eq2.221}--\eqref{1-eq2.225}. Note that the problem \eqref{1-eq3.2}--\eqref{1-eq3.6} is equivalent to
\begin{align}
&W_{\varphi}+b^{1/2}(\tilde{Q})W_{\psi}=-\frac{1}{2}b^{-1/2}(\tilde{Q})p(\tilde{Q})\tilde{Q}_{\psi}W,&&(\varphi,\psi)\in(0,1)\times(0,m),\label{1-eq3.7}\\
&Z_{\varphi}-b^{1/2}(\tilde{Q})Z_{\psi}=\frac{1}{2}b^{-1/2}(\tilde{Q})p(\tilde{Q})\tilde{Q}_{\psi}Z,&&(\varphi,\psi)\in(0,1)\times(0,m),\label{1-eq3.8}\\
&W(0,\psi)=W_0(\psi)=-\frac{1}{c_*\rho(c_*^2)}\left.\frac{\mathrm{d}}{\mathrm{d}y}\left(\sin\Theta(y)\right)\right|_{y=Y_{\mathrm{in}}(\psi)},&&\psi\in(0,m),\label{1-eq3.9}\\
&Z(0,\psi)=Z_0(\psi)=\frac{1}{c_*\rho(c_*^2)}\left.\frac{\mathrm{d}}{\mathrm{d}y}\left(\sin\Theta(y)\right)\right|_{y=Y_{\mathrm{in}}(\psi)},&&\psi\in(0,m),\label{1-eq3.10}\\
&W(\varphi,0)+Z(\varphi,0)=0,&&\varphi\in(0,1),\label{1-eq3.11}\\
&W(\varphi,m)+Z(\varphi,m)=0,&&\varphi\in(0,1),\label{1-eq3.12}\\
&Q_{\varphi}(\varphi,\psi)=\frac{1}{2}\left(W(\varphi,\psi)-Z(\varphi,\psi)\right),&& (\varphi,\psi)\in(0,1)\times(0,m),\label{1-eq3.13}\\
&Q(0,\psi)=0,&&\psi\in(0,m).\label{1-eq3.14}
\end{align}
Then
a direct computation yields that
\begin{align}
&\lim_{s\to0^-}b(s)=\lim_{s\rightarrow0^-}p(s)=+\infty,\quad
\lim_{s\to0^-}(-s)^{1/4}b^{1/2}(s)=\hat{\mu},\quad
\lim_{s\to0^-}(-s)^{-1/4}b^{-1/2}(s)=\frac{1}{\hat{\mu}},\label{1-eq2.91}\\
&\lim_{s\to0^-}(-s)^{3/2}p(s)=\frac12\hat{\mu}^2,
\quad \lim_{s\to0^-}(-s)^{5/2}p'(s)=\frac34\hat{\mu}^2,\quad
\lim_{s\to0^-}(-s)b^{-1}(s)p(s)=\frac{1}{2}\label{1-eq2.9},
\end{align}
where $\hat{\mu}$ is a positive constant depending only on $\gamma$.
\begin{remark}
\label{1-rem Q0}
{\it The solution $Q$ of the problem \eqref{1-eq3.2}--\eqref{1-eq3.6} is
 a small perturbation of $-\frac{\varphi}{c_*^{1+2/(\gamma-1)}R_0} $. Thus $Q$ satisfies
 \begin{align}\label{1-eq2.27}
 -\mu_2\leq Q_\varphi(0,\psi)\leq -\mu_1,
\end{align}
where
\begin{align*}
\mu_1=\frac{1}{c_*^{1+2/(\gamma-1)}}\left(\frac{1}{R_0}-\sigma\right),\quad
\mu_2=\frac{1}{c_*^{1+2/(\gamma-1)}}\left(\frac{1}{R_0}+\sigma\right).
\end{align*}
Then it follows from \eqref{1-eq3.3} and \eqref{1-eq2.27} that
\begin{equation}\label{1-eq2.36}
\mu_1\leq -W_0(\psi),Z_0(\psi)\leq \mu_2,\quad \psi \in(0,m).
\end{equation}}
\end{remark}
\begin{remark}
\label{1-rem3.1}
{\it By   \eqref{1-eq3.7}, \eqref{1-eq3.8} and \eqref{1-eq3.13}, the compatibility condition is of the form
\begin{equation}\label{1-eq3.15}
Q_{\psi}(\varphi,\psi)=-\frac{1}{2}b^{-1/2}(\tilde{Q}(\varphi,\psi))\left(W(\varphi,\psi)+Z(\varphi,\psi)\right),\quad(\varphi,\psi)\in(0,1)\times(0,m).
\end{equation}}
\end{remark}

\par
In the following two subsections, we first establish the  well-posedness of the linearized problem \eqref{1-eq3.7}--\eqref{1-eq3.14}, and then solve the nonlinear problem \eqref{1-eq2.221}--\eqref{1-eq2.225}.
To simple the notations, we use $\left|(W,Z)\right|$ to denote $\max\left\{\left|W\right|,\left|Z\right|\right\}$.
\subsection{The well-posedness of the linearized problem }\label{2-1-sonic} \noindent
\par Given $\tilde{Q}\in C^{0,1}([0,1]\times[0,m])$ satisfying
\begin{align}
-\beta_2\varphi \leq &\tilde{Q}(\varphi,\psi)\leq -\beta_1\varphi, \quad(\varphi,\psi)\in(0,1)\times(0,m),\label{1-eq3.16}\\
-\beta_2\leq \tilde{Q}_{\varphi}(\varphi,\psi)&\leq -\beta_1,\quad \left|\tilde{Q}_{\psi}(\varphi,\psi)\right|\leq\beta_3\varphi,\quad (\varphi,\psi)\in(0,1)\times(0,m)\label{1-eq3.17}
\end{align}
for some positive constants $\beta_i$ $(i=1,2,3)$, where
$$
\beta_1\ge\frac{1} {c_*^{1+2/(\gamma-1)}R_0}-\tilde{\beta}_1\sigma,\ \ \beta_2
\le\frac{1}{c_*^{1+2/(\gamma-1)}R_0}+\tilde{\beta}_2\sigma,  \ \  \beta_3=\tilde{\beta}_3\sigma
$$
 and $\tilde{\beta}_i$ $(i=1,2,3)$ are positive constants to be specified  below. Then it holds that
\begin{align}
-\tilde{\beta}_2\sigma\varphi \leq \tilde{Q}(\varphi,\psi)+&\frac{\varphi}{c_*^{1+2/(\gamma-1)}R_0}\leq \tilde{\beta}_1\sigma\varphi, \quad(\varphi,\psi)\in(0,1)\times(0,m),\label{1-eq3.16*}\\
-\tilde{\beta}_2\sigma\leq \tilde{Q}_{\varphi}(\varphi,\psi)+\frac{1}{c_*^{1+2/(\gamma-1)}R_0}&\leq \tilde{\beta}_1\sigma
,\quad \left|\tilde{Q}_{\psi}(\varphi,\psi)\right|\leq\tilde{\beta}_3\sigma\varphi,\quad (\varphi,\psi)\in(0,1)\times(0,m).\label{1-eq3.17*}
\end{align}
Furthermore, combining \eqref{1-eq2.91}--\eqref{1-eq2.9}  with \eqref{1-eq3.16}--\eqref{1-eq3.17} leads to
\begin{align}
\left|\frac{1}{2}b^{-1/2}(\tilde{Q}(\varphi,\psi))p(\tilde{Q}(\varphi,\psi))\tilde{Q}_{\psi}(\varphi,\psi)\right|\leq \frac{\hat{\mu}\beta_3}{3\beta_1^{5/4}}\varphi^{-1/4},\quad(\varphi,\psi)\in(0,1)\times(0,m).\label{1-eq3.18}
\end{align}
 Next, we solve the problem \eqref{1-eq3.7}--\eqref{1-eq3.14}  under the assumptions \eqref{1-eq3.16}, \eqref{1-eq3.17} and \eqref{1-eq3.18}.
\begin{proposition}\label{1-pro3.1}
Given  $(g,\tilde{Q},W_0,Z_0) \in C^4([-R_0\sin\theta_0,R_0\sin\theta_0])\times C^{0,1}([0,1]\times[0,m])\times L^{\infty}(0,m)\times L^{\infty}(0,m)$ satisfying \eqref{1-eq2.1}, \eqref{1-eq2.43}, \eqref{1-eq2.36} and \eqref{1-eq3.16}--\eqref{1-eq3.18}, the problem \eqref{1-eq3.7}--\eqref{1-eq3.14} admits a unique solution $(W,Z,Q)\in L^{\infty}((0,1)\times(0,m))\times L^{\infty}((0,1)\times(0,m))\times C^{0,1}([0,1]\times[0,m])$ that satisfies
\begin{align}
&\mu_1\exp\bigg\{-\frac{4\hat{\mu}\beta_3}{9\beta_1^{5/4}}\bigg\}
\leq -W(\varphi,\psi),Z(\varphi,\psi)\leq
\mu_2\exp\bigg\{\frac{4\hat{\mu}\beta_3}{9\beta_1^{5/4}}\bigg\}, &&\quad(\varphi,\psi)\in(0,1)\times(0,m),\label{1-eq3.21}\\
&\left|W(\varphi,\psi)+Z(\varphi,\psi)\right|\leq \bigg(\frac{\mu_4\sigma}{\beta_1^{1/4}}
+\frac{8\hat{\mu}\mu_2\beta_3}{9\beta_1^{5/4}}\bigg)\exp\bigg\{\frac{4\hat{\mu}\beta_3}{9\beta_1^{5/4}}\bigg\}\varphi^{3/4},&&\quad(\varphi,\psi)\in(0,1)\times(0,m),\label{1-eq3.22}\\
&-\mu_2\exp\bigg\{\frac{4\hat{\mu}\beta_3}{9\beta_1^{5/4}}\bigg\}\varphi
  \leq Q(\varphi,\psi)\leq -\mu_1\exp\bigg\{-\frac{4\hat{\mu}\beta_3}{9\beta_1^{5/4}}\bigg\}\varphi,&&\quad(\varphi,\psi)\in(0,1)\times(0,m),\label{1-eq3.23}\\
&-\mu_2\exp\bigg\{\frac{4\hat{\mu}\beta_3}{9\beta_1^{5/4}}\bigg\}\leq Q_{\varphi}(\varphi,\psi)\leq
-\mu_1\exp\bigg\{-\frac{4\hat{\mu}\beta_3}{9\beta_1^{5/4}}\bigg\},&&\quad(\varphi,\psi)\in(0,1)\times(0,m),\label{1-eq3.24}\\
&|Q_{\psi}(\varphi,\psi)|\leq\bigg(\frac{\mu_5\beta_2^{1/4}\sigma}{\beta_1^{1/4}}
+\frac{\mu_2\beta_2^{1/4}\beta_3}{2\beta_1^{5/4}}\bigg)
\exp\bigg\{\frac{4\hat{\mu}\beta_3}{9\beta_1^{5/4}}\bigg\}\varphi
,&&\quad(\varphi,\psi)\in(0,1)\times(0,m),\label{1-eq3.25}
\end{align}
where $\mu_1$ and $\mu_2$ $(\mu_1\leq\mu_2)$ are positive constants depending only on $R_0$ and $\gamma$, while $\hat{\mu}$, $\mu_4$ and $\mu_5$ are positive constants depending only on $\gamma$.
\end{proposition}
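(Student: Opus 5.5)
The plan is to solve the decoupled linear system \eqref{1-eq3.7}--\eqref{1-eq3.8} by the method of characteristics, then recover $Q$ from \eqref{1-eq3.13}--\eqref{1-eq3.14}. Since $\tilde Q$ is fixed, the two equations are linear transport equations coupled only through the reflection conditions \eqref{1-eq3.11}--\eqref{1-eq3.12} at the walls.

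\textbf{Characteristics.} Define the characteristics $\psi_\pm(\varphi;\varphi_0,\psi_0)$ by $\frac{d\psi_\pm}{d\varphi}=\pm b^{1/2}(\tilde Q(\varphi,\psi_\pm))$. By \eqref{1-eq2.91} and \eqref{1-eq3.16}, $b^{1/2}(\tilde Q)\sim\hat\mu(-\tilde Q)^{-1/4}\le C\beta_1^{-1/4}\varphi^{-1/4}$, which is integrable at $\varphi=0$. Since $\tilde Q$ is Lipschitz in $\psi$ for $\varphi>0$, these curves are well defined up to $\varphi=0$, and each reaches $\psi$-distance only $O(\varphi^{3/4})$.

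\textbf{Construction of $(W,Z)$.} Along a $W$-characteristic the equation reads $\frac{d}{d\varphi}W=-\frac12 b^{-1/2}p\tilde Q_\psi W$. By \eqref{1-eq3.18} the coefficient is bounded by $\frac{\hat\mu\beta_3}{3\beta_1^{5/4}}\varphi^{-1/4}$, and $\int_0^1\varphi^{-1/4}d\varphi=4/3$. Hence $W$ equals its starting value times a factor in $[e^{-4\hat\mu\beta_3/(9\beta_1^{5/4})},e^{4\hat\mu\beta_3/(9\beta_1^{5/4})}]$; the constant $4/9$ arises because a characteristic path may reflect several times, but total $\varphi$-length is still at most 1. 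The $Z$-equation is treated the same way. A backward characteristic from $(\varphi,\psi)$ either reaches $\varphi=0$, where $W_0$ or $Z_0$ is taken, or hits a wall. At a wall, $W=-Z$ switches to the other family, and that family is followed further back. Because $\varphi$ strictly decreases along these paths and there are finitely many reflections (the wall-to-wall crossing time is bounded below for $\varphi$ away from 0, and the path length tends to 0 near $\varphi=0$), this gives an explicit, well-defined solution. Sign preservation together with \eqref{1-eq2.36} ($-W_0,Z_0\in[\mu_1,\mu_2]$) and $-W=Z$ at reflection yields \eqref{1-eq3.21}. Uniqueness in $L^\infty$ follows from the same representation applied to the difference, or from a weak-formulation duality argument.

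\textbf{Estimate for $W+Z$ (main obstacle).} For \eqref{1-eq3.22}, take the backward $W$- and $Z$-characteristics through $(\varphi,\psi)$. They reach $\varphi=0$ at points $\psi_1,\psi_2$ with $|\psi_1-\psi_2|\le C\beta_1^{-1/4}\varphi^{3/4}$, or pass through a wall reflection within the same distance. Write
\[
W(\varphi,\psi)+Z(\varphi,\psi)=\bigl(W_0(\psi_1)+Z_0(\psi_2)\bigr)+\text{(integrated source terms)}.
\]
The first term is $Z_0(\psi_2)-Z_0(\psi_1)$ because $W_0=-Z_0$. Since $Z_0'$ is controlled by $C\sigma$ (the derivative of the background datum vanishes, as the background $Z_0$ is constant), this term is at most $\mu_4\sigma\beta_1^{-1/4}\varphi^{3/4}$. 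At a wall reflection, the same cancellation holds with the reflected datum. The source contributions are bounded by $2\mu_2 e^{(\cdot)}\frac{\hat\mu\beta_3}{3\beta_1^{5/4}}\int_0^\varphi s^{-1/4}ds$, which gives the $\frac{8\hat\mu\mu_2\beta_3}{9\beta_1^{5/4}}\varphi^{3/4}$ term. The delicate point is tracking the exponential factors and handling paths with a reflection near the corners. I would treat a reflection by noting that $W+Z=0$ on the wall, so that only the part of the path after the reflection contributes, and the same bounds then apply.

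\textbf{Recovering $Q$.} Define $Q(\varphi,\psi)=\int_0^\varphi\frac12(W-Z)\,ds$. Then \eqref{1-eq3.21} gives \eqref{1-eq3.24} and \eqref{1-eq3.23}. The compatibility relation \eqref{1-eq3.15} gives $Q_\psi$ in the distribution sense; this is checked from the characteristic representation, or by verifying the weak formulation with the structure of \eqref{1-eq3.7}--\eqref{1-eq3.8}. Combining \eqref{1-eq3.15} with \eqref{1-eq3.22} and $b^{-1/2}(\tilde Q)\le C\beta_2^{1/4}\varphi^{1/4}$ produces the factor $\varphi^{1/4}\cdot\varphi^{3/4}=\varphi$ in \eqref{1-eq3.25}. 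The Lipschitz regularity of $Q$ then follows.
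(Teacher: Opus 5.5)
Up to the last step your route is the paper's: broken backward characteristics reflected by $W=-Z$ at $\psi=0,m$, the exponential factor from \eqref{1-eq3.18} and $\int_0^1 s^{-1/4}ds=\frac43$ (the reflections play no role in the constant $4/9$), comparison of landing points on $\{\varphi=0\}$ using $W_0=-Z_0$ and $|Z_0'|\le C\sigma$, and integration of \eqref{1-eq3.13}. Those parts are fine. One correction on the reflections. Your first remark (same cancellation with the reflected datum) is the right one. Because $W_0=-Z_0$, the broken backward $W$-path always returns $W_0$ at its landing point, and the broken $Z$-path always returns $Z_0$, whatever the number of reflections. The two landing points are at distance at most $2\int_0^\varphi \sup b^{1/2}(\tilde Q)\,ds$. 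Your second remark, that only the part after a reflection contributes because $W+Z=0$ on the wall, does not apply: the two backward paths from an interior point do not hit the wall at a common point.

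The genuine gap is the $Q_\psi$ step. The relation \eqref{1-eq3.15} does not follow from \eqref{1-eq3.7}, \eqref{1-eq3.8}, \eqref{1-eq3.13}, \eqref{1-eq3.14} when $\tilde Q\neq Q$, so it cannot be checked from the characteristic representation. Using $p=b'$ (a direct check from the explicit formulas), the defect $X:=Q_\psi+\frac12 b^{-1/2}(\tilde Q)(W+Z)$ satisfies $X(0,\cdot)=0$ and
\[
X_\varphi=-\tfrac14\, b^{-1}(\tilde Q)\,p(\tilde Q)\bigl(b^{-1/2}(\tilde Q)\,\tilde Q_\varphi\,(W+Z)+\tilde Q_\psi\,(W-Z)\bigr).
\]
The right-hand side vanishes identically only at a fixed point $\tilde Q=Q$. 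For a concrete failure, take $\tilde Q=-\beta\varphi$ and data affine near some $\psi$, $W_0=-c+\epsilon\psi=-Z_0$. Away from the walls you get $Q=(-c+\epsilon\psi)\varphi$, hence $Q_\psi=\epsilon\varphi$. On the other hand, $-\frac12 b^{-1/2}(\tilde Q)(W+Z)=\epsilon\Lambda(\varphi)/b^{1/2}(-\beta\varphi)$ with $\Lambda(\varphi)=\int_0^\varphi b^{1/2}(-\beta s)\,ds>\varphi\, b^{1/2}(-\beta\varphi)$ (as $b'=p>0$); asymptotically this is $\frac43\epsilon\varphi$ by \eqref{1-eq2.91}. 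The paper asserts the same identity in Remark \ref{1-rem3.1} and uses it in the same way, so you have inherited its gap rather than introduced one, but the step still fails.

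To repair it, integrate the equation for $X$. By \eqref{1-eq2.9}, \eqref{1-eq3.17}, \eqref{1-eq3.21} and \eqref{1-eq3.22}, its right-hand side is $O(\sigma+\beta_3)$, so $|Q_\psi|\le C(\sigma+\beta_3)\varphi$. This has the structure of \eqref{1-eq3.25} and still gives $Q\in C^{0,1}$. However, the coefficient of $\beta_3$ grows by an amount of order one. So the constants in \eqref{1-eq3.25} must be rederived, and you must recheck that this coefficient stays below $1$ when $\beta_3$ is closed in Subsection \ref{2-2-sonic}.
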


\begin{proof}
According to the analysis of Cauchy problems for degenerate-hyperbolic equations in \cite{B50}, the problem \eqref{1-eq3.7}--\eqref{1-eq3.12} admits a unique weak solution
$(W,Z)\in L^{\infty}((0,1)\times(0,m))\times L^{\infty}((0,1)\times(0,m))$. Next, we derive the estimates for $(W,Z)$.
Assume that
\begin{align*}
\Sigma_\pm&: \Psi_\pm'=\pm b^{1/2}(\tilde{Q}(\varphi,\Psi_\pm(\varphi))),  \\
0&<\Psi_\pm<m,\quad\hat{\varphi}_\pm\leq\varphi\leq\check{\varphi}_\pm\quad(0\leq\hat{\varphi}_\pm<\check{\varphi}_\pm\leq 1)
\end{align*}
are positive and negative characteristics, respectively. On $\Sigma_+$, $W$ satisfies
\begin{align*}
\frac{d}{d\varphi}W(\varphi,\Psi_+(\varphi))=
-\frac{1}{2}b^{-1/2}(\tilde{Q}(\varphi,\Psi_+(\varphi)))p(\tilde{Q}(\varphi,\Psi_+(\varphi)))\tilde{Q}_{\psi}(\varphi,\Psi_+(\varphi))W(\varphi,\Psi_+(\varphi)),\quad\hat{\varphi}_+\leq\varphi\leq\check{\varphi}_+,
\end{align*}
which can be rewritten as
\begin{align*}
\bigg(W(\varphi,\Psi_+(\varphi))\exp\bigg\{\int_{\hat{\varphi}_+}^{\varphi}\frac{1}{2}b^{-1/2}(\tilde{Q}(s,\Psi_+(s)))p(\tilde{Q}(s,\Psi_+(s)))\tilde{Q}_{\psi}(s,\Psi_+(s))ds\bigg\}\bigg)^{\prime}=0,\quad\hat{\varphi}_+\leq\varphi\leq\check{\varphi}_+.
\end{align*}
Then for any $\hat{\varphi}_+\leq\varphi\leq\check{\varphi}_+$, it holds that
\begin{align}
&W(\check{\varphi}_+,\Psi_+(\check{\varphi}_+))\nonumber\\
=\,&W(\hat{\varphi}_+,\Psi_+(\hat{\varphi}_+))
\exp\bigg\{\int_{\hat{\varphi}_+}^{\check{\varphi}_+}-\frac{1}{2}b^{-1/2}\bigl(\tilde{Q}(s,\Psi_+(s))\bigr)p(\tilde{Q}(s,\Psi_+(s)))\tilde{Q}_{\psi}(s,\Psi_+(s))ds\bigg\}.\label{1-eq3.26}
\end{align}
Similarly, on $\Sigma_-$, Z satisfies
\begin{align*}
\frac{d}{d\varphi}Z(\varphi,\Psi_-(\varphi))=
\frac{1}{2}b^{-1/2}(\tilde{Q}(\varphi,\Psi_-(\varphi)))p(\tilde{Q}(\varphi,\Psi_-(\varphi)))\tilde{Q}_{\psi}(\varphi,\Psi_-(\varphi))
Z(\varphi,\Psi_-(\varphi)),\quad\hat{\varphi}_-\leq\varphi\leq\check{\varphi}_-,
\end{align*}
which can be expressed as
\begin{align*}
\bigg(Z(\varphi,\Psi_-(\varphi))\exp\bigg\{\int_{\hat{\varphi}_-}^{\varphi}-\frac{1}{2}b^{-1/2}(\tilde{Q}(s,\Psi_-(s)))p(\tilde{Q}(s,\Psi_-(s)))\tilde{Q}_{\psi}(s,\Psi_-(s))ds\bigg\}\bigg)^{'}=0,\quad\hat{\varphi}_-\leq\varphi\leq\check{\varphi}_-.
\end{align*}
Then for any $\hat{\varphi}_-\leq\varphi\leq\check{\varphi}_-$, it holds that
\begin{align}\label{1-eq3.27}
&Z(\check{\varphi}_-,\Psi_-(\check{\varphi}_-))\nonumber\\
=\,&Z(\hat{\varphi}_-,\Psi_-(\hat{\varphi}_-))
\exp\bigg\{\int_{\hat{\varphi}_-}^{\check{\varphi}_-}\frac12b^{-1/2}(\tilde{Q}(s,\Psi_-(s)))p(\tilde{Q}(s,\Psi_-(s)))\tilde{Q}_{\psi}(s,\Psi_-(s))ds\bigg\}.
\end{align}
\input{keven1.TpX}
\input{kodd1.TpX}

In the following, we use the method of characteristics to estimate $W$ and $Z$. Fix $(\varphi_0,\psi_0)\in(0,1)\times(0,m)$.
Let $\psi=\Psi_1(\varphi)$ be the positive characteristic starting from the point $(\varphi_0,\psi_0)$, which approaches either
$\{0\}\times[0,m]$ or $(0,1)\times\{0\}$ at a point $(\varphi_1,\psi_1)$. If $\varphi_1>0$,
then there exists a negative characteristic $\psi=\Psi_2(\varphi)$ from $(\varphi_1,\psi_1)$,
which approaches either $\{0\}\times[0,m]$ or $(0,1)\times\{m\}$ at a point $(\varphi_2,\psi_2)$. If $\varphi_2>0$,
then there exists a positive characteristic $\psi=\Psi_3(\varphi)$ from $(\varphi
_2,\psi_2)$, which approaches either $\{0\}\times[0,m]$ or $(0,1)\times\{0\}$ at a point $(\varphi_3,\psi_3)$.
Therefore, there exists a nonnegative integer $k$ such that
$$
\varphi_0>\varphi_1>\cdots>\varphi_k>\varphi_{k+1}=0,
$$
and
$$
\psi_j=\left\{
\begin{aligned}
&0,\quad1\le j\le k\mbox{ and $j$ is odd},
\\
&m,\quad1\le j\le k\mbox{ and $j$ is even},
\end{aligned}
\right.
\quad0\le\psi_{k+1}\le m.
$$
Define a function $r_1(s)$ $(0\leq s<\varphi)$ as follows:
\begin{equation*}
\begin{aligned}
r_1(s)=\left\{
\begin{aligned}
&-\dfrac{1}{2}b^{-1/2}(\tilde{Q}(s,\Psi_j(s)))p(\tilde{Q}(s,\Psi_j(s)))\tilde{Q}_{\psi}(s,\Psi_j(s)),
\quad
\\
&\quad\quad\quad\quad\quad\quad\varphi_{j}\le s<\varphi_{j-1},\, 1\le j\le k+1\mbox{ and $j$ is odd},
\\[2.5 mm]
&\dfrac{1}{2}b^{-1/2}(\tilde{Q}(s,\Psi_j(s)))p(\tilde{Q}(s,\Psi_j(s)))\tilde{Q}_{\psi}(s,\Psi_j(s)),
\quad
\\
&\quad\quad\quad\quad\quad\quad\varphi_{j}\le s<\varphi_{j-1},\,1\le j\le k+1\mbox{ and $j$ is even},
\end{aligned}
\right.
\end{aligned}
\end{equation*}
where
$$
\left\{
\begin{aligned}
&\Psi'_{j}(\varphi)=(-1)^{j-1}b^{1/2}(\tilde Q(\varphi,\Psi_{j}(\varphi))),
\quad\varphi_{j}<\varphi<\varphi_{j-1},
\\
&\Psi_{j}(\varphi_j)=\psi_{j},\quad \Psi_{j}(\varphi_{j-1})=\psi_{j-1},
\end{aligned}
\right.
\quad1\le j\le k+1.
$$
\input{kevenz1.TpX}
\input{koddz1.TpX}
Similarly, let $\psi=\Psi_1^*(\varphi)$ be the negative characteristic starting from the point $(\varphi_0,\psi_0)$,  the points $(\varphi_j^*,\psi_j^*)$ $(1\le j\le k+1)$ can be defined in the same way.
Then there exists a nonnegative integer $k$ such that
$$
\varphi_0>\varphi_1^*>\cdots>\varphi_k^*>\varphi_{k+1}^*=0,
$$
and
$$
\psi_j^*=\left\{
\begin{aligned}
&m,\quad1\le j\le k\mbox{ and $j$ is odd},
\\
&0,\quad1\le j\le k\mbox{ and $j$ is even},
\end{aligned}
\right.
\quad0\le\psi_{k+1}^*\le m,
$$
Define a function $r_1^*(s)$ $(0\leq s<\varphi)$ as follows:
\begin{equation*}
\begin{aligned}
r_1^*(s)=\left\{
\begin{aligned}
&\dfrac{1}{2}b^{-1/2}(\tilde{Q}(s,\Psi_j^*(s)))p(\tilde{Q}(s,\Psi_j^*(s)))\tilde{Q}_{\psi}(s,\Psi_j^*(s)),
\quad
\\
&\quad\quad\quad\quad\quad\quad\varphi_{j}^*\le s<\varphi_{j-1}^*,\, 1\le j\le k+1\mbox{ and $j$ is odd},
\\[2.5 mm]
&-\dfrac{1}{2}b^{-1/2}(\tilde{Q}(s,\Psi_j^*(s)))p(\tilde{Q}(s,\Psi_j^*(s)))\tilde{Q}_{\psi}(s,\Psi_j^*(s)),
\quad
\\
&\quad\quad\quad\quad\quad\quad\varphi_{j}^*\le s<\varphi_{j-1}^*,\,1\le j\le k+1\mbox{ and $j$ is even},
\end{aligned}
\right.
\end{aligned}
\end{equation*}
where $$
\left\{
\begin{aligned}
&(\Psi_{j}^*)'(\varphi)=(-1)^{j}b^{1/2}(\tilde Q(\varphi,\Psi_{j}(\varphi))),
\quad\varphi_{j}^*<\varphi<\varphi_{j-1}^*,
\\
&\Psi_{j}^*(\varphi_j^*)=\psi_{j}^*,\quad \Psi_{j}^*(\varphi_{j-1}^*)=\psi_{j-1}^*,
\end{aligned}
\right.
\quad1\le j\le k+1.
$$
\par For $1\leq j \leq k+1$, it follows from \eqref{1-eq3.26} and \eqref{1-eq3.27} that
\begin{align}\label{1-eq3.26*}
W(\varphi_{j-1},\psi_{j-1})=W(\varphi_j,\psi_j)
\exp\bigg\{\int_{\varphi_j}^{\varphi_{j-1}}r_1(s)ds\bigg\}\mbox{ if $j$ is odd}\nonumber,\\
Z(\varphi_{j-1},\psi_{j-1})=Z(\varphi_j,\psi_j)
\exp\bigg\{\int_{\varphi_j}^{\varphi_{j-1}}r_1(s)ds\bigg\}\mbox{ if $j$ is even}.
\end{align}
If $k$ is even, one can get from \eqref{1-eq3.9}--\eqref{1-eq3.12} and \eqref{1-eq3.26*} that
\begin{align}\label{1-eqw1}
W(\varphi_0,\psi_0)=\,&W(\varphi_1,\psi_1)\exp\bigg\{\int_{\varphi_1}^{\varphi_0}r_1(s)ds\bigg\}
=-Z(\varphi_1,\psi_1)\exp\bigg\{\int_{\varphi_1}^{\varphi_0}r_1(s)ds\bigg\}\nonumber\\
=&-Z(\varphi_2,\psi_2)\exp\bigg\{\int_{\varphi_2}^{\varphi_0}r_1(s)ds\bigg\}
=W(\varphi_2,\psi_2)\exp\bigg\{\int_{\varphi_2}^{\varphi_0}r_1(s)ds\bigg\}\nonumber\\
=\,&W_0(\psi_{k+1})\exp\bigg\{\int_{0}^{\varphi_0}r_1(s)ds\bigg\}
\end{align}
and
\begin{align}\label{1-eqz1}
Z(\varphi_0,\psi_0)=\,&Z_0(\psi_{k+1}^*)\exp\bigg\{\int_{0}^{\varphi_0}r_1^*(s)ds\bigg\}.
\end{align}
Similarly, if $k$ is odd, one derives
\begin{align}\label{1-eqw2}
W(\varphi_0,\psi_0)=-Z_0(\psi_{k+1})\exp\bigg\{\int_{0}^{\varphi_0}r_1(s)ds\bigg\},
\quad Z(\varphi_0,\psi_0)=-W_0(\psi_{k+1}^*)\exp\bigg\{\int_{0}^{\varphi_0}r_1^*(s)ds\bigg\}.
\end{align}
It follows from \eqref{1-eq2.36}, \eqref{1-eq3.18}, \eqref{1-eqw1} and \eqref{1-eqw2} that in both cases,
\begin{align*}
W(\varphi_0,\psi_0)
&\leq-\mu_1\exp\bigg\{-\frac{4\hat{\mu}\beta_3}{9\beta_1^{5/4}}\varphi_0^{3/4}\bigg\}
\leq-\mu_1\exp\bigg\{-\frac{4\hat{\mu}\beta_3}{9\beta_1^{5/4}}\bigg\}
\end{align*}
and
\begin{align*}
W(\varphi_0,\psi_0)
&\geq-\mu_2\exp\bigg\{\frac{4\hat{\mu}\beta_3}{9\beta_1^{5/4}}\varphi_0^{3/4}\bigg\}
\geq-\mu_2\exp\bigg\{\frac{4\hat{\mu}\beta_3}{9\beta_1^{5/4}}\bigg\}.
\end{align*}
Similarly, the estimate of $Z$ can be obtained as follows:
\begin{align*}
\mu_1\exp\bigg\{-\frac{4\hat{\mu}\beta_3}{9\beta_1^{5/4}}\bigg\}\leq Z(\varphi,\psi)
\leq \mu_2\exp\bigg\{\frac{4\hat{\mu}\beta_3}{9\beta_1^{5/4}}\bigg\},\quad
(\varphi,\psi)\in(0,1)\times(0,m).
\end{align*}
Thus the estimate \eqref{1-eq3.21} holds.

We now turn to estimate $\left|W+Z\right|$. If $k$ is even, it holds that
\begin{align*}
&\psi_0-0=\int_{\varphi_1}^{\varphi_0}b^{1/2}(\tilde{Q}(\varphi,\psi))d\varphi,\quad
0-m=\int_{\varphi_2}^{\varphi_1}-b^{1/2}(\tilde{Q}(\varphi,\psi))d\varphi,\quad
m-0=\int_{\varphi_3}^{\varphi_2}b^{1/2}(\tilde{Q}(\varphi,\psi))d\varphi,\\
&0-m=\int_{\varphi_4}^{\varphi_3}-b^{1/2}(\tilde{Q}(\varphi,\psi))d\varphi,\quad
\ldots,\quad
m-\Psi_+(0)=\int_{0}^{\varphi_k}b^{1/2}(\tilde{Q}(\varphi,\psi))d\varphi,\\
&\psi_0-m=\int_{\varphi_1^*}^{\varphi_0}-b^{1/2}(\tilde{Q}(\varphi,\psi))d\varphi,\quad
m-0=\int_{\varphi_2^*}^{\varphi_1^*}b^{1/2}(\tilde{Q}(\varphi,\psi))d\varphi,\quad
0-m=\int_{\varphi_3^*}^{\varphi_2^*}-b^{1/2}(\tilde{Q}(\varphi,\psi))d\varphi,\\
&m-0=\int_{\varphi_4^*}^{\varphi_3^*}b^{1/2}(\tilde{Q}(\varphi,\psi))d\varphi,\quad
\ldots,\quad
0-\Psi_-(0)=\int_{0}^{\varphi_k^*}-b^{1/2}(\tilde{Q}(\varphi,\psi))d\varphi.\quad
\end{align*}
From these identities, one derives
\begin{align*}
\psi_0 - \Psi_+(0) = \int_0^{\varphi_0} s_+(\varphi) b^{1/2}(\tilde{Q}) d\varphi,\quad\psi_0 - \Psi_-(0) = \int_0^{\varphi_0} s_-(\varphi) b^{1/2}(\tilde{Q}) d\varphi,
\end{align*}
where  $s_{\pm}(\varphi)\in \{1, -1\}$ are piecewise constant sign functions defined as follows:
\begin{align*}
    s_+(\varphi) &= (-1)^j, \quad \text{for } \varphi \in (\varphi_{j+1}, \varphi_j], \quad j = 0, 1, 2, \ldots, k, \\
    s_-(\varphi) &= (-1)^{j+1}, \quad \text{for } \varphi \in (\varphi_{j+1}^*, \varphi_j^*], \quad j = 0, 1, 2, \ldots, k.
\end{align*}
Then it follows from \eqref{1-eq2.91} that
\begin{align}\label{1-eqvar}
\left|\Psi_+(0)-\Psi_-(0)\right|
&\leq \int_0^{\varphi_0} \left| s_-(\varphi)b^{1/2}(\tilde{Q}) - s_+(\varphi)b^{1/2}(\tilde{Q}) \right| d\varphi\nonumber\\
&\leq 2\int_{0}^{\varphi_0}\left|b^{1/2}(\tilde{Q}(\varphi,\psi))\right|d\varphi
\leq\frac{\mu_3}{\beta_1^{1/4}}\varphi_0^{3/4},
\end{align}
where $\mu_3$ is a positive constant depending only on $\hat{\mu}$. This,  together with \eqref{1-eq3.9}, \eqref{1-eq3.10}, \eqref{1-eqw1} and \eqref{1-eqz1}, yields that
\begin{align}\label{1-eqwz1}
\left|W(\varphi_0,\psi_0)+Z(\varphi_0,\psi_0)\right|
=\,&\left|W_0(\psi_{k+1})\exp\bigg\{\int_{0}^{\varphi_0}r_1(s)ds\bigg\}+Z_0(\psi_{k+1}^*)\exp\bigg\{\int_{0}^{\varphi_0}r_1^*(s)ds\bigg\}\right|\nonumber\\
\leq\,&\left|W_0(\Psi_+(0))+Z_0(\Psi_-(0))\right|\exp\bigg\{\int_{0}^{\varphi_0}r_1(s)ds\bigg\}\nonumber\\
&\quad+\left|Z_0(\Psi_-(0))\right|\left|\exp\bigg\{\int_{0}^{\varphi_0}r_1^*(s)ds\bigg\}-\exp\bigg\{\int_{0}^{\varphi_0}r_1(s)ds\bigg\}\right|\nonumber\\
\leq\,&\left|\frac{1}{c_*^{2/(\gamma-1)+1}}\bigg(\left.\frac{d}{dy}\left(\sin\Theta(y)\right) \right|_{y=Y_{\mathrm{in}}(\psi)}\bigg)'(\Psi_+(0)-\Psi_-(0))\right|\exp\bigg\{\frac{4\hat{\mu}\beta_3}{9\beta_1^{5/4}}\varphi_0^{3/4}\bigg\}\nonumber\\
&\quad+\frac{8\hat{\mu}\mu_2\beta_3}{9\beta_1^{5/4}}\exp\bigg\{\frac{4\hat{\mu}\beta_3}{9\beta_1^{5/4}}\varphi_0^{3/4}\bigg\}\varphi_0^{3/4}\nonumber\\
\leq\,&\bigg(\frac{\mu_4\sigma}{\beta_1^{1/4}}
+\frac{8\hat{\mu}\mu_2\beta_3}{9\beta_1^{5/4}}\bigg)\exp\bigg\{\frac{4\hat{\mu}\beta_3}{9\beta_1^{5/4}}\bigg\}\varphi_0^{3/4},
\end{align}
where $\mu_4$ is a positive constant depending only on $\mu_3$ and $\gamma$. Similarly, if $k$ is odd, \eqref{1-eqvar} remains valid, and combining it with \eqref{1-eqw2} gives
\begin{align}\label{1-eqwz2}
\left|W(\varphi_0,\psi_0)+Z(\varphi_0,\psi_0)\right|
=\,&\left|Z_0(\psi_{k+1})\exp\bigg\{\int_{0}^{\varphi_0}r_1(s)ds\bigg\}+W_0(\psi_{k+1}^*)\exp\bigg\{\int_{0}^{\varphi_0}r_1^*(s)ds\bigg\}\right|\nonumber\\
\leq\,&\bigg(\frac{\mu_4\sigma}{\beta_1^{1/4}}
+\frac{8\hat{\mu}\mu_2\beta_3}{9\beta_1^{5/4}}\bigg)\exp\bigg\{\frac{4\hat{\mu}\beta_3}{9\beta_1^{5/4}}\bigg\}\varphi_0^{3/4}.
\end{align}
Combining two cases  yields \eqref{1-eq3.22}.
 \par Next, one can get the weak solution $Q \in C^{0,1}([0,1]\times[0,m])$ by solving the problem \eqref{1-eq3.13} and \eqref{1-eq3.14}. Thus $(W,Z,Q)\in L^{\infty}((0,1)\times(0,m))\times L^{\infty}((0,1)\times(0,m)\in C^{0,1}([0,1]\times[0,m])$ is the unique weak solution to the problem \eqref{1-eq3.7}--\eqref{1-eq3.14} satisfying \eqref{1-eq3.21}, \eqref{1-eq3.22} and
\begin{align*}
-\mu_2\exp\bigg\{\frac{4\hat{\mu}\beta_3}{9\beta_1^{5/4}}\bigg\}
 \leq Q_{\varphi}(\varphi,\psi) \leq -\mu_1\exp\bigg\{-\frac{4\hat{\mu}\beta_3}{9\beta_1^{5/4}}\bigg\},\quad (\varphi,\psi)\in(0,1)\times(0,m), \\
-\mu_2\exp\bigg\{\frac{4\hat{\mu}\beta_3}{9\beta_1^{5/4}}\bigg\}\varphi
 \leq Q(\varphi,\psi) \leq -\mu_1\exp\bigg\{-\frac{4\hat{\mu}\beta_3}{9\beta_1^{5/4}}\bigg\}\varphi,\quad (\varphi,\psi)\in(0,1)\times(0,m).
\end{align*}
Furthermore, by \eqref{1-eq2.91}, \eqref{1-eqwz1} and \eqref{1-eqwz2}, it holds that
\begin{align*}
\left|Q_{\psi}(\varphi,\psi)\right|
\leq\,&\frac{1}{2}b^{-1/2}(\tilde{Q}(\varphi,\psi))\left|W(\varphi,\psi)+Z(\varphi,\psi)\right|\\
\leq\,&\bigg(\frac{\mu_5\beta_2^{1/4}\sigma}{\beta_1^{1/4}}
+\frac{\mu_2\beta_2^{1/4}\beta_3}{2\beta_1^{5/4}}\bigg)
\exp\bigg\{\frac{4\hat{\mu}\beta_3}{9\beta_1^{5/4}}\bigg\}\varphi,\quad(\varphi,\psi)\in(0,1)\times(0,m),
\end{align*}
where $\mu_5$ is a positive constant depending only on $\mu_4$ and $\hat{\mu}$. Consequently, we obtain \eqref{1-eq3.23}-\eqref{1-eq3.25}.
\end{proof}
\subsection{Solving the nonlinear boundary value problem }\label{2-2-sonic}\noindent
\par
Based on the well-posedness of the linearized problem \eqref{1-eq3.7}--\eqref{1-eq3.14},
we are going to prove the existence of weak solutions to the nonlinear problem \eqref{1-eq2.221}--\eqref{1-eq2.225}.
According to \eqref{1-eq3.16}--\eqref{1-eq3.18} and Proposition \ref{1-pro3.1}, the problem \eqref{1-eq3.7}--\eqref{1-eq3.14} admits a unique weak solution $(W,Z,Q)\in L^{\infty}((0,1)\times(0,m))\times L^{\infty}((0,1)\times(0,m))\times C^{0,1}([0,1]\times[0,m])$. Furthermore, $Q$ satisfies
\begin{align}
&-\mu_2\exp\bigg\{\frac{4\hat{\mu}\beta_3}{9\beta_1^{5/4}}\bigg\}
  \leq Q_{\varphi}(\varphi,\psi)
  \leq -\mu_1\exp\bigg\{-\frac{4\hat{\mu}\beta_3}{9\beta_1^{5/4}}\bigg\},
\quad &&(\varphi,\psi)\in(0,1)\times(0,m), \label{1-eq3.31} \\
&-\mu_2\exp\bigg\{\frac{4\hat{\mu}\beta_3}{9\beta_1^{5/4}}\bigg\}\varphi
  \leq Q(\varphi,\psi)
  \leq -\mu_1\exp\bigg\{-\frac{4\hat{\mu}\beta_3}{9\beta_1^{5/4}}\bigg\}\varphi,
\quad&&(\varphi,\psi)\in(0,1)\times(0,m), \label{1-eq3.32} \\
 &\left|Q_{\psi}(\varphi,\psi)\right|
  \leq \bigg(\frac{\mu_5\beta_2^{1/4}\sigma}{\beta_1^{1/4}}
  +\frac{\mu_2\beta_2^{1/4}\beta_3}{2\beta_1^{5/4}}\bigg)
\exp\bigg\{\frac{4\hat{\mu}\beta_3}{9\beta_1^{5/4}}\bigg\}\varphi,
  \quad &&(\varphi,\psi)\in(0,1)\times(0,m), \label{1-eq3.33}
\end{align}
where $\mu_i$ $(i=1,2,5)$ $(\mu_1\leq\mu_2)$ and $\hat{\mu}$ from the estimates \eqref{1-eq3.21}--\eqref{1-eq3.25} are positive constants. Set
\begin{equation}\label{1-eqe1}
\beta_1\geq \frac{3\mu_1}{4},\quad \frac{\beta_2}{\beta_1}\leq \left(\frac{5}{4}\right)^{4/5},\quad \mu_6 > \frac{4^{9/4}\hat{\mu}}{9(3\mu_1)^{5/4}}.
\end{equation}
Then we consider the function
$$ f_1(t)=\exp\bigg\{\frac{4^{9/4}\hat{\mu}t}{9(3\mu_1)^{5/4}}\bigg\}-1-\mu_6 t,  \ \ t>0.$$
A direct computation yields that $f_1(0)=0$ and $ f_1'(0)=\frac{4^{9/4}\hat{\mu}}{9(3\mu_1)^{5/4}}-\mu_6<0$.
Then there exists a small positive constant $N_1$ such that if $0<\beta_3 \leq N_1$, the following  estimate holds:
\begin{align}\label{1-eqe}
1-\exp\bigg\{-\frac{4\hat{\mu}\beta_3}{9\beta_1^{5/4}}\bigg\}\leq \exp\bigg\{\frac{4\hat{\mu}\beta_3}{9\beta_1^{5/4}}\bigg\}-1\leq
\exp\bigg\{\frac{4^{9/4}\hat{\mu}\beta_3}{9(3\mu_1)^{5/4}}\bigg\}-1
\le \mu_6\beta_3.
\end{align}
Here we used $1-e^{-t} \leq e^t-1$ for all $ t>0.$ This estimate, together with \eqref{1-eq3.33} and \eqref{1-eqe1}, yields that
\begin{equation}\label{1-eqe-aa}
\begin{aligned}
\left|Q_{\psi}(\varphi,\psi)\right|
  &\leq \bigg(\frac{\mu_5\beta_2^{1/4}\sigma}{\beta_1^{1/4}}
  +\frac{\mu_2\beta_2^{1/4}\beta_3}{2\beta_1^{5/4}}\bigg)
\exp\bigg\{\frac{4\hat{\mu}\beta_3}{9\beta_1^{5/4}}\bigg\}\varphi\\
&\leq \bigg( \bigg(\frac{5}{4}\bigg)^{1/5}\mu_5\sigma+\frac{2\mu_2}{3\mu_1}
\bigg(\frac{5}{4}\bigg)^{1/5}\beta_3\bigg)(1+\mu_6\beta_3).
\end{aligned}
\end{equation}
Note that if
$\sigma \le \frac{\gamma - 1}{20 R_0(\gamma+1)},$ \eqref{1-eq2.27} implies
\begin{align}\label{1-mu1}
\frac{\mu_2}{\mu_1}\leq\frac{21\gamma + 19}{19\gamma + 21}.
\end{align}
Define
\begin{equation*}
\quad \mu_7=\left(\frac{5}{4}\right)^{1/5}\mu_5,
\quad0<\mu_8=\left(\frac{5}{4}\right)^{1/5}\frac{14}{19}<1.
\end{equation*}
If
\begin{equation}\label{1-eqe13}
\mu_7\sigma+\mu_8\beta_3\leq\frac{1-\mu_8}{2\mu_6},
\end{equation}
it follows from \eqref{1-eq3.33}--\eqref{1-eqe13} that
\begin{align*}
\left|Q_{\psi}(\varphi,\psi)\right|
  \leq\,&(\mu_7\sigma+\mu_8\beta_3)(1+\mu_6\beta_3)\varphi\nonumber\\
  \leq\,&\bigg((\mu_7\sigma+\mu_8\beta_3)+\mu_6(\mu_7\sigma+\mu_8\beta_3)\beta_3\bigg)\varphi\nonumber\\
  \le\,&\bigg(\mu_7\sigma+\frac{(1+\mu_8)\beta_3}{2}\bigg)\varphi.
\end{align*}
\par Next, we  choose
\begin{align}\label{1-beta*}
\beta_1=\mu_1-\mu_1\mu_6\beta_3,\quad
\beta_2=\mu_2+\mu_2\mu_6\beta_3,\quad\beta_3=\mu_9\sigma \ \ {\rm{with}} \ \ \mu_9=\frac{2\mu_7}{1-\mu_8},
\end{align}
\begin{align}\label{1-eq3.34}
\tilde{\beta}_1=\frac{1}{c_*^{1+2/(\gamma-1)}} \bigg( 1 + \frac{\mu_6\mu_9}{R_0} \bigg),
\quad\tilde{\beta}_2= \frac{1}{c_*^{1+2/(\gamma-1)}} \bigg( 1 + \frac{\mu_6\mu_9(21\gamma +19)}{20 R_0 (\gamma + 1)} \bigg),
\quad\tilde{\beta}_3=\mu_9,
\end{align}
and
\begin{align}\label{1-eq3.35}
\sigma_1=\min\left\{
\frac{1}{2(\mu_6\mu_9+R_0)},\frac{1}{4\mu_6\mu_9+1},\frac{19 \cdot 5^{4/5} - 21 \cdot 4^{4/5}}{\mu_6\mu_9(19 \cdot 5^{4/5} + 21 \cdot 4^{4/5})},\frac{N_1}{\mu_9},\frac{1-\mu_8}{2\mu_6(\mu_7+\mu_8\mu_9)},\frac{\gamma - 1}{20 R_0 (\gamma + 1)}\right\}.
\end{align}
Then if $0< \sigma \leq \sigma_1$, it follows from \eqref{1-eq3.31}--\eqref{1-eq3.35} that
\begin{align}
-\beta_2\varphi \leq &Q(\varphi,\psi)\leq -\beta_1\varphi, \quad(\varphi,\psi)\in(0,1)\times(0,m),\label{1-qq1}\\
-\beta_2\leq Q_{\varphi}(\varphi,\psi)&\leq -\beta_1,\quad \left|Q_{\psi}(\varphi,\psi)\right|\leq\beta_3\varphi,\quad (\varphi,\psi)\in(0,1)\times(0,m),\label{1-qq2}
\end{align}
which implies
\begin{align}
-\tilde{\beta}_2\sigma\varphi \leq Q(\varphi,\psi)+&\frac{\varphi}{c_*^{1+2/(\gamma-1)}R_0}\leq \tilde{\beta}_1\sigma\varphi, \quad(\varphi,\psi)\in(0,1)\times(0,m),\label{1-eq3.36}\\
-\tilde{\beta}_2\sigma\leq Q_{\varphi}(\varphi,\psi)+\frac{1}{c_*^{1+2/(\gamma-1)}R_0}&\leq \tilde{\beta}_1\sigma,\quad \left|Q_{\psi}(\varphi,\psi)\right|\leq\tilde{\beta}_3\sigma\varphi,\quad (\varphi,\psi)\in(0,1)\times(0,m).\label{1-eq3.38}
\end{align}
Define
\begin{align*}
\mathscr{S}_1 = \left\{\tilde{Q}\in C^{0,1}([0,1]\times[0,m]): \tilde{Q}\text{ satisfies }\eqref{1-eq3.16*}-\eqref{1-eq3.17*}\text{ with }\eqref{1-eq3.34}\right\}
\end{align*}
with the norm
$$
\|\tilde{Q}\|_{{\mathscr{S}}_1}=\|\tilde{Q}\|_{L^{\infty}((0,1)\times(0,m))},\quad\tilde{Q}\in {\mathscr{S}}_1.
$$
Based on the above analysis, we define a mapping $J$ from $\mathscr{S}_1$ to itself as follows:
\begin{align}\label{1-eq3.43}
J(\tilde{Q})=Q,\quad\tilde{Q}\in\mathscr{S}_1.
\end{align}
Then we have the following theorem.
\begin{theorem}
\label{1-th3.1}
Given  $g\in C^4([-R_0\sin\theta_0,R_0\sin\theta_0])$ satisfying \eqref{1-eq2.1} and \eqref{1-eq2.43}. Then if $\sigma\leq\sigma_1$  with $\sigma_1$ given by \eqref{1-eq3.35}, the problem \eqref{1-eq2.221}--\eqref{1-eq2.225} admits at least one weak solution $Q\in C^{0,1}([0,1]\times[0,m])$ satisfying
\begin{align}\label{1-eq3.44}
-\tilde{\beta}_2\sigma\varphi \leq Q(\varphi,\psi)+&\frac{\varphi}{c_*^{1+2/(\gamma-1)}R_0}\leq \tilde{\beta}_1\sigma\varphi,\quad&&(\varphi,\psi)\in(0,1)\times(0,m),\nonumber\\
-\tilde{\beta}_2\sigma\leq Q_{\varphi}(\varphi,\psi)+\frac{\varphi}{c_*^{1+2/(\gamma-1)}R_0}&\leq \tilde{\beta}_1\sigma,\quad \left|Q_{\psi}(\varphi,\psi)\right|\leq\tilde{\beta}_3\sigma\varphi,\quad
&&(\varphi,\psi)\in(0,1)\times(0,m),
\end{align}
where $\tilde{\beta} _i$ $(i= 1,2,3)$ are positive constants given by \eqref{1-eq3.34}.
\end{theorem}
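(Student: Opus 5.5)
The plan is to apply the Schauder fixed-point theorem to the map $J$ defined in \eqref{1-eq3.43} on the set $\mathscr{S}_1$, equipped with the $L^\infty$ norm. Any fixed point $Q=J(Q)$ solves \eqref{1-eq3.7}--\eqref{1-eq3.14} with $\tilde Q=Q$. By \eqref{1-eq3.15}, the weak formulation of the linearized problem then becomes the weak formulation of \eqref{1-eq2.221}--\eqref{1-eq2.225}. Moreover, $Q\in\mathscr{S}_1$ gives exactly the estimates \eqref{1-eq3.44}.

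\textbf{Step 1: $J$ maps $\mathscr{S}_1$ into itself.} This is essentially done in the computation preceding the theorem. For $\tilde Q\in\mathscr{S}_1$ with $\sigma\le\sigma_1$, the constants \eqref{1-beta*}--\eqref{1-eq3.34} make the hypotheses \eqref{1-eq3.16}--\eqref{1-eq3.18} hold. In particular $\beta_3=\mu_9\sigma\le N_1$ and $\beta_1\ge 3\mu_1/4$, using $\mu_6\mu_9\sigma\le 1/4$ from the choice of $\sigma_1$. Proposition \ref{1-pro3.1} then yields a unique solution $Q$. Next, \eqref{1-eqe} together with the restrictions in \eqref{1-eq3.35} gives \eqref{1-qq1}--\eqref{1-qq2}, hence \eqref{1-eq3.36}--\eqref{1-eq3.38}, so $Q\in\mathscr{S}_1$. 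I would only recheck that the bound $(\mu_7\sigma+\frac{1+\mu_8}{2}\beta_3)\varphi\le\beta_3\varphi$ holds with $\beta_3=\mu_9\sigma$. It does, since $\mu_7+\frac{1+\mu_8}{2}\mu_9=\mu_9$ by the definition of $\mu_9$.

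\textbf{Step 2: compactness and convexity.} The set $\mathscr{S}_1$ is convex, because all the defining inequalities are linear in $\tilde Q$ and its derivatives. It is closed in $L^\infty$: uniform Lipschitz bounds pass to uniform limits, and the a.e.\ derivative bounds survive via weak-$*$ convergence of the gradients. It is precompact by Arzelà--Ascoli, since the functions are uniformly Lipschitz and vanish at $\varphi=0$. Hence $\mathscr{S}_1$ is a compact convex subset of $C([0,1]\times[0,m])$.

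\textbf{Step 3: continuity of $J$, the main obstacle.} Take $\tilde Q_n\to\tilde Q$ uniformly in $\mathscr{S}_1$ and set $Q_n=J(\tilde Q_n)$. By compactness, a subsequence satisfies $Q_n\to Q^*$ uniformly, with $\nabla Q_n\rightharpoonup\nabla Q^*$ weak-$*$ in $L^\infty$. The corresponding $W_n,Z_n$ are uniformly bounded by \eqref{1-eq3.21}, so they converge weak-$*$ to some $W^*,Z^*$. To pass to the limit in the weak formulation of \eqref{1-eq3.7}--\eqref{1-eq3.12}:
\begin{itemize}
\item The coefficients $b^{1/2}(\tilde Q_n)$ converge to $b^{1/2}(\tilde Q)$ uniformly on $\{\varphi\ge\delta\}$.
\item Near $\varphi=0$ these coefficients are integrable uniformly in $n$, since $b^{1/2}(\tilde Q_n)\lesssim\varphi^{-1/4}$ by \eqref{1-eq2.91} and \eqref{1-eq3.16}.
\item The zeroth-order term $b^{-1/2}p\,\tilde Q_{n,\psi}$ is bounded by $C\varphi^{-1/4}$ by \eqref{1-eq3.18}, but involves only the weak-$*$ convergent factor $\tilde Q_{n,\psi}$. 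Multiplying it by the merely weak-$*$ convergent $W_n$ is the real difficulty.
\end{itemize}
To handle this product, I would use the weak form of \eqref{1-eq3.2} in terms of $Q_n$ directly, namely $\int (Q_{n,\varphi}\xi_\varphi-b(\tilde Q_n)Q_{n,\psi}\xi_\psi)-\int_0^m W_0\xi(0,\psi)\,d\psi=0$. This form is linear in $\nabla Q_n$ with uniformly convergent coefficients, and $b(\tilde Q_n)\lesssim\varphi^{-1/2}$ is integrable near $\varphi=0$. So the limit $Q^*$ satisfies the weak equation with coefficient $b(\tilde Q)$. By the uniqueness part of Proposition \ref{1-pro3.1} (Bers' theory for the degenerate Cauchy problem), $Q^*=J(\tilde Q)$. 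Since the limit is independent of the subsequence, the whole sequence converges and $J$ is continuous. Schauder's theorem then gives the fixed point.

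The delicate point is justifying that uniqueness holds within the class of weak solutions that arise as such limits, that is, Lipschitz $Q$ with the asymptotics above. I would verify this by reapplying the characteristic representation \eqref{1-eqw1}--\eqref{1-eqw2} to $W^*=Q^*_\varphi-b^{1/2}Q^*_\psi$ and $Z^*=-Q^*_\varphi-b^{1/2}Q^*_\psi$.
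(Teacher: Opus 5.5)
Your proposal follows the paper's proof step for step. Both apply Schauder's theorem to $J$ on $\mathscr{S}_1$ with the $L^\infty$ norm, take the self-mapping property from \eqref{1-qq1}--\eqref{1-qq2}, get compactness from the uniform Lipschitz bounds, and prove continuity by extracting a uniformly convergent subsequence, passing to the limit in the second-order weak form \eqref{1-eq3.47}--\eqref{1-eq3.51} rather than in the $(W,Z)$ system, and identifying the limit through the uniqueness in Proposition~\ref{1-pro3.1}. Your extra caution about whether that uniqueness covers the limiting weak solution refines a point the paper simply cites; it is not a different route.
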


\begin{proof}
 As mentioned above, the mapping $J$ defined by \eqref{1-eq3.43} maps $\mathscr{S}_1$ into itself. In view of \eqref{1-eq3.17*}, the set $J(\mathscr{S}_1)$ is uniformly bounded in $C^{0,1}([0,1]\times[0,m])$. Since $C^{0,1}([0,1]\times[0,m])$ is compactly embedded into $C([0,1]\times[0,m])$, it follows that $J$ is compact. Therefore, by \eqref{1-eq3.36}, \eqref{1-eq3.38}, and the Schauder fixed point theorem, it remains only to prove that $J$ is continuous, which then yields Theorem \ref{1-th3.1}.

 \par
 Assume that $\{\tilde{Q}^{(n)}\}_{n=0}^\infty\subset \mathscr{S}_1$ satisfies
\begin{align}\label{1-eq3.45}
\lim_{n\to\infty}\|\tilde{Q}^{(n)}-\tilde{Q}^{(0)}\|_{L^{\infty}((0,1)\times(0,m))}=0.
\end{align}
Furthermore, the definition of $J$ gives that
$$J(\tilde{Q}^{(n)})=Q^{(n)},\quad n=0,1,2,\ldots,$$
where $Q^{(n)}$ is the unique weak solution to the following problem:
\begin{align}
&Q^{(n)}_{\varphi\varphi}-(b({\tilde {Q}^{(n)}})Q^{(n)}_{\psi})_{\psi}=0,&&(\varphi,\psi)\in(0,1)\times(0,m),\label{1-eq3.47}\\
&Q^{(n)}(0,\psi)=0,&&\psi\in(0,m), \label{1-eq3.48}\\
&Q^{(n)}_\varphi(0,\psi)=-\frac{1}{c_*\rho(c_*^2)}\left.\frac{\mathrm{d}}{\mathrm{d}y}\left(\sin\Theta(y)\right)\right|_{y=Y_{\mathrm{in}}(\psi)}   ,&&\psi\in(0,m), \label{1-eq3.49}\\
&Q^{(n)}_\psi(\varphi,0)=0,&&\varphi\in(0,1),\label{1-eq3.50}\\
&Q^{(n)}_{\psi}(\varphi,m)=0,&&\varphi\in(0,1) \label{1-eq3.51}
\end{align}
for $n=0,1,2,\ldots$. Therefore, to show the continuity of $J$, it suffices to prove
\begin{align}\label{1-eq3.52}
\lim_{n\to\infty}\|Q^{(n)}-Q^{(0)}\|_{L^{\infty}((0,1)\times(0,m))}=0.
\end{align}

We will prove \eqref{1-eq3.52} by contradiction. Otherwise, there exist a positive constant $\varepsilon_{0}$ and a subsequence
of $\{Q^{(n)}\}_{n=1}^{\infty}$, denoted by itself for convenience, such that
\begin{align}\label{1-eq3.53}
\|Q^{(n)}-Q^{(0)}\|_{L^{\infty}((0,1)\times(0,m))}\geq\varepsilon_{0},\quad n=1,2,\ldots.
\end{align}
Since $Q^{(n)}$ satisfies \eqref{1-eq3.36} and \eqref{1-eq3.38} for each $n= 1, 2, \ldots$, there exist a subsequence of $\{Q^{(n)}\}_{n=1}^\infty$,  still denoted by itself, and a function $Q^{*}$ with \eqref{1-eq3.16*}--\eqref{1-eq3.17*} such that
\begin{align}
&Q^{(n)}\longrightarrow Q^*\text{ in }L^\infty((0,1)\times(0,m))\text{~as }n\to\infty,\label{1-eq3.54}\\
Q_{\varphi}^{(n)}\rightharpoonup Q_{\varphi}^{*}\text{~and~} &Q_\psi^{(n)}\rightharpoonup Q_{\psi}^{*} \text{~weakly~}^*\text{~in~} L^\infty((0,1)\times(0,m))\text{~as~}n\to \infty.\label{1-eq3.55}
\end{align}
Letting $n\to \infty$ in \eqref{1-eq3.47}--\eqref{1-eq3.51} and using \eqref{1-eq3.45}, \eqref{1-eq3.54} and \eqref{1-eq3.55}, one can get that
$Q^*$ solves the following problem:
\begin{align*}
&Q^*_{\varphi\varphi}-(b({\tilde Q^{(0)}})Q^*_{\psi})_{\psi}=0,&&(\varphi,\psi)\in(0,1)\times(0,m),\\
&Q^*(0,\psi)=0,&&\psi\in(0,m),\\
&Q^*_\varphi(0,\psi)=-\frac{1}{c_*\rho(c_*^2)}\left.\frac{\mathrm{d}}{\mathrm{d}y}\left(\sin\Theta(y)\right)\right|_{y=Y_{\mathrm{in}}(\psi)}   ,&&\psi\in(0,m), \\
&Q^*_\psi(\varphi,0)=0,&&\varphi\in(0,1),\\
&Q^*_{\psi}(\varphi,m)=0,&&\varphi\in(0,1).
\end{align*}
It follows from the uniqueness in Proposition \ref{1-pro3.1} that
$$
Q^*(\varphi,\psi)=Q^{(0)}(\varphi,\psi),\quad(\varphi,\psi)\in(0,1)\times(0,m),
$$
which contradicts \eqref{1-eq3.53} and \eqref{1-eq3.54}. Hence \eqref{1-eq3.52} holds.
\end{proof}
\subsection{Continuous sonic-supersonic flows }\label{2-3-sonic}\noindent
\par
In this subsection, we establish the existence of $C^{1,1}$ sonic-supersonic flows in $[0,1]$. Assume that
\begin{equation}\label{1-eq3.61}
  |\tilde{Q}_{\varphi\varphi}(\varphi,\psi)|\leq \beta_4\sigma\varphi^{1/2},\ \
  |\tilde{Q}_{\varphi\psi}(\varphi,\psi)|\leq \beta_5\sigma,\ \
  |\tilde{Q}_{\psi\psi}(\varphi,\psi)|\leq \beta_6\sigma\varphi,\ \ (\varphi,\psi)\in(0,1)\times(0,m)
\end{equation} for some positive constants $\beta_i$ $(i=4,5,6)$. Then the following proposition holds.
\begin{proposition}\label{1-pro3.2}
Let the assumptions of Proposition \ref{1-pro3.1} hold and $(W,Z,Q)$ be the unique weak solution to the problem \eqref{1-eq3.7}--\eqref{1-eq3.14}, then $(W,Z,Q)\in L^{\infty}((0,1)\times(0,m))\times L^{\infty}((0,1)\times(0,m))\times C^{1,1}([0,1]\times[0,m])$ satisfies
\begin{align}
&\left|b^{-1/2}(\tilde{Q})(\varphi,\psi)W_{\varphi}(\varphi,\psi)\right|\leq
\left(M_2\beta_6+M_3\right)\sigma,\ \ &&(\varphi,\psi)\in(0,1)\times(0,m),\label{1-eq3.62}\\
&\left|b^{-1/2}(\tilde{Q})(\varphi,\psi)Z_{\varphi}(\varphi,\psi)\right|\leq
\left(M_2\beta_6+M_3\right)\sigma,\ \ &&(\varphi,\psi)\in(0,1)\times(0,m),\label{1-eq3.621}\\
&\left|b^{-1/2}(\tilde{Q})(\varphi,\psi)\left(W_{\varphi}(\varphi,\psi)-Z_{\varphi}(\varphi,\psi)\right)\right|
\leq\bigg(M_4\beta_6+M_5\bigg)\sigma\varphi^{3/4},\ \ &&(\varphi,\psi)\in(0,1)\times(0,m),\label{1-eq3.63}\\
&\left|Q_{\varphi\psi}(\varphi,\psi)\right|\leq
\left(M_6\beta_6+M_7\right)\sigma,\ \ &&(\varphi,\psi)\in(0,1)\times(0,m),\label{1-eq3.66}\\
&\left|Q_{\varphi\varphi}(\varphi,\psi)\right|\leq
\bigg(M_8\beta_6+M_9\bigg)\sigma\varphi^{1/2},\ \ &&(\varphi,\psi)\in(0,1)\times(0,m),\label{1-eq3.64}\\
&\left|Q_{\psi\psi}(\varphi,\psi)\right|\leq
\bigg(M_{10}\beta_6+M_{11}\bigg)\sigma\varphi,\ \ &&(\varphi,\psi)\in(0,1)\times(0,m),\label{1-eq3.65}
\end{align}
where the positive constants  $M_i$ $(i=2,\cdots,5,10)$ depend only on $\sigma$, $R_0$, $\gamma$  and $M_j$ $(j=6,\cdots,9,11)$  depend only on $R_0$ and $\gamma$.
\end{proposition}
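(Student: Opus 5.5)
The plan is to differentiate the characteristic system \eqref{1-eq3.7}--\eqref{1-eq3.8} in $\varphi$ and estimate the weighted quantities
$$U:=b^{-1/2}(\tilde Q)W_\varphi,\qquad V:=b^{-1/2}(\tilde Q)Z_\varphi$$
along characteristics, exactly as in Proposition \ref{1-pro3.1}. The weight $b^{-1/2}(\tilde Q)\sim \hat\mu^{-1}(-\tilde Q)^{1/4}\sim\varphi^{1/4}$ is chosen because $W_\varphi=-b^{1/2}W_\psi+\dots$ should blow up like $\varphi^{-1/4}$ near the sonic line.

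\textbf{Evolution equations.} A direct computation from \eqref{1-eq3.7} gives
$$U_\varphi+b^{1/2}(\tilde Q)U_\psi=a_1U+a_2W,$$
where
\begin{itemize}
\item $a_1$ collects $-\tfrac12 b^{-1/2}p\tilde Q_\psi$ together with the terms from differentiating the coefficient and the weight. These are $-\tfrac12 b^{-1}b'(\tilde Q)\tilde Q_\varphi$ and $\tfrac12 b^{-1}b'\tilde Q_\varphi$ (up to sign), and they cancel at leading order. What remains is bounded by $C\beta_3\varphi^{-1/4}$, using \eqref{1-eq2.91}--\eqref{1-eq2.9} and \eqref{1-eq3.17}.
\item $a_2$ contains $\partial_\varphi\big(b^{-1/2}p\tilde Q_\psi\big)$, which involves $\tilde Q_{\varphi\psi}$, $\tilde Q_\psi\tilde Q_\varphi$, and $p'$, all multiplied by $b^{-1/2}$. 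Using $|\tilde Q_{\varphi\psi}|\le\beta_5\sigma$, $|\tilde Q_\psi|\le\beta_3\varphi$ and $(-s)^{5/2}p'\to\tfrac34\hat\mu^2$, I expect $|a_2W|\le C\sigma\varphi^{-1/4}$, with the $\beta_5$-dependence absorbed via the compatibility relation.
\end{itemize}
To avoid a $\beta_5$ dependence in the final constant, I would replace $\tilde Q_{\varphi\psi}$ by $\tilde Q_{\psi\psi}$ using $\tilde Q_{\varphi\varphi}=(b\tilde Q_\psi)_\psi$. This is plausibly why only $\beta_6$ appears in \eqref{1-eq3.62}.

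\textbf{Initial and boundary data.} At $\varphi=0$, the equation itself gives $W_\varphi=-b^{1/2}W_0'-\dots$, so $U(0,\psi)=-W_0'(\psi)+O(\cdot)$, which is $O(\sigma)$ because $W_0'$ involves $\Theta''$ and is controlled by the $C^3$ norm. On $\psi=0,m$, differentiating \eqref{1-eq3.11} gives $U+V=0$. The compatibility condition \eqref{1-eq2.43} ensures the corner data match, so no jump arises.

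\textbf{Sup bounds on $U,V$.} Running the reflected-characteristic chain of Proposition \ref{1-pro3.1} with the integrating factor $\exp\int r_1$, and noting $\int_0^\varphi s^{-1/4}ds\lesssim\varphi^{3/4}$, gives \eqref{1-eq3.62}--\eqref{1-eq3.621}.

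\textbf{The difference $U-V$.} For \eqref{1-eq3.63}, I would estimate $U-V$ by comparing the two backward characteristics, as in the $W+Z$ estimate. This uses the spread bound \eqref{1-eqvar} ($|\Psi_+(0)-\Psi_-(0)|\lesssim\varphi^{3/4}$), the $C^3$ regularity of $\Theta$ (so that $U(0,\cdot)$ is Lipschitz with norm $O(\sigma)$), and the $\varphi^{3/4}$ smallness of the integrated source terms.

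\textbf{Second derivatives of $Q$.}
\begin{itemize}
\item $Q_{\varphi\varphi}=\tfrac12(W_\varphi-Z_\varphi)=\tfrac12 b^{1/2}(U-V)$, which is $O(\varphi^{-1/4}\cdot\sigma\varphi^{3/4})=O(\sigma\varphi^{1/2})$. This gives \eqref{1-eq3.64}.
\item Differentiating \eqref{1-eq3.15} in $\varphi$ gives $Q_{\varphi\psi}=-\tfrac12(U+V)+(\tfrac14 b^{-3/2}b'\tilde Q_\varphi)(W+Z)$. With $|W+Z|\lesssim\sigma\varphi^{3/4}$ and $b^{-3/2}b'\sim\varphi^{-3/4}$, this gives \eqref{1-eq3.66}.
\item For $Q_{\psi\psi}$, I would use the equation $bQ_{\psi\psi}=Q_{\varphi\varphi}-b'\tilde Q_\psi Q_\psi$, with $b\sim\varphi^{-1/2}$, $b'\sim\varphi^{-3/2}$, and $|\tilde Q_\psi Q_\psi|\lesssim\sigma^2\varphi^2$. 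This yields $|Q_{\psi\psi}|\lesssim\sigma\varphi$, in which $\beta_6$ enters linearly through the earlier steps, giving \eqref{1-eq3.65}.
\end{itemize}

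\textbf{Main obstacle.} The hardest step will be the exact leading-order cancellation in $a_1$. Without it, $a_1$ behaves like $c/\varphi$ (since $b^{-1}b'\tilde Q_\varphi\sim\varphi^{-1}$), which would give a non-integrable exponent. This requires tracking the limits \eqref{1-eq2.91} precisely; if the cancellation is not exact, I would adjust the weight exponent so that the residual coefficient is $O(\varphi^{-1/4})$.
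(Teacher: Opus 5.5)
Your overall setup is the same as the paper's: the weighted unknowns $U=b^{-1/2}(\tilde Q)W_\varphi$, $V=b^{-1/2}(\tilde Q)Z_\varphi$, the reflected-characteristic chain, and the expressions of $Q_{\varphi\varphi},Q_{\varphi\psi},Q_{\psi\psi}$ through $U\pm V$ and $W+Z$. The term you call the main obstacle is actually harmless. The cancellation in $a_1$ is exact, giving
$U_\varphi+b^{1/2}(\tilde Q)U_\psi=-R_sU-W\partial_\varphi F_s$, with $R_s=b^{-1/2}p\tilde Q_\psi=O(\beta_3\varphi^{-1/4})$ and $F_s=\frac12 b^{-1}(\tilde Q)p(\tilde Q)\tilde Q_\psi$.

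\textbf{The genuine gap is the source term $a_2W=-W\partial_\varphi F_s$.} Since $b^{-1}p(\tilde Q)\sim\frac{1}{2(-\tilde Q)}\sim\varphi^{-1}$ and $(b^{-1}p)'(\tilde Q)\sim\frac12(-\tilde Q)^{-2}$, the two pieces $\frac12 b^{-1}p\,\tilde Q_{\varphi\psi}$ and $\frac12(b^{-1}p)'\tilde Q_\varphi\tilde Q_\psi$ are each only $O(\varphi^{-1})$. This holds under the bounds available for the iterate, namely $|\tilde Q_{\varphi\psi}|\le\beta_5\sigma$ and $|\tilde Q_\psi|\le\beta_3\varphi$. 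The quantity $F_s\approx\tilde Q_\psi/(4(-\tilde Q))$ is bounded, but its $\varphi$-derivative is small only if $\tilde Q_\psi/\tilde Q$ varies slowly in $\varphi$, and the pointwise bounds defining the iteration set do not encode that. Hence your bound $|a_2W|\le C\sigma\varphi^{-1/4}$ is false. Moreover $\int_0^{\varphi_0}|a_2W|\,d\varphi$ diverges logarithmically, so the sup bound on $U,V$ does not close, and neither do the $U-V$ estimate and everything built on it.

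Your proposed repair does not work either. The iterate $\tilde Q$ does not satisfy $\tilde Q_{\varphi\varphi}=(b(\tilde Q)\tilde Q_\psi)_\psi$. Even if it did, that identity does not express $\tilde Q_{\varphi\psi}$ through $\tilde Q_{\psi\psi}$.

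\textbf{The missing idea is the paper's integration by parts along characteristics.}
\begin{itemize}
\item On $\Sigma_+$, write $\partial_\varphi F_s=\frac{d}{d\varphi}F_s(\varphi,\Psi_+(\varphi))-b^{1/2}\partial_\psi F_s$. Then integrate the total-derivative part by parts against the integrating factor $\exp\{\int R_s\}$, using $\frac{d}{d\varphi}W=-\frac12R_sW$ along $\Sigma_+$.
\item What remains under the integral is $W\bigl(\frac12R_sF_s+b^{1/2}\partial_\psi F_s\bigr)$. This involves only $\tilde Q_\psi^2$ and $\tilde Q_{\psi\psi}$ and is $O\bigl((\beta_3^2+\beta_6\sigma)\varphi^{-1/4}\bigr)$, hence integrable. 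This is the real reason only $\beta_6$, and not $\beta_5$, enters the constants.
\item The boundary terms $WF_s$ vanish at the reflection points on $\psi=0,m$, where $\tilde Q_\psi=0$.
\item At $\varphi=0$ the boundary terms combine with $U(0,\cdot)=-F_s(0,\cdot)W_0-W_\psi(0,\cdot)$ to leave exactly $-W_\psi(0,\cdot)$.
\item At the endpoint they leave $-WF_s$, which is bounded. In the $U-V$ estimate these endpoint terms combine into $(W+Z)F_s=O(\sigma\varphi^{3/4})$ by \eqref{1-eq3.22}.
\item The data difference $W_\psi(0,\Psi_+(0))+Z_\psi(0,\Psi_-(0))$ is then controlled by \eqref{1-eqvar} together with the third derivative of $\sin\Theta$.
\end{itemize}
Without this device, your $U-V$ step inherits the same non-integrable source.
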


\begin{proof}
 Set $$
 (U_s,V_s)(\varphi,\psi)=\left(b^{-1/2}(\tilde{Q})W_\varphi,b^{-1/2}(\tilde{Q})Z_\varphi\right)(\varphi,\psi),\quad (\varphi,\psi)\in (0,1)\times(0,m).
 $$ Then $(U_s,V_s)$ solves the following problem:
\begin{align}
 &\partial_\varphi U_s + b^{1/2}(\tilde{Q}) \partial_\psi U_s=-R_s U_s-W\partial_\varphi F_s,\quad&&(\varphi,\psi)\in (0,1)\times(0,m),\label{1-eq3.67}\\
 & \partial_\varphi V_s - b^{1/2}(\tilde{Q}) \partial_\psi V_s =R_s V_s + Z \partial_\varphi F_s,\quad&&(\varphi,\psi)\in (0,1)\times(0,m),\label{1-eq3.68}\\
 &{U_s}(0,\psi)={U_{s,0}}(\psi),\quad&&\psi\in (0,m),\label{1-eq3.69}\\
 &{V_s}(0,\psi)={V_{s,0}}(\psi),\quad&&\psi\in (0,m),\label{1-eq3.70}\\
 &U_s(\varphi,0)+{V_s}(\varphi,0)=0,\quad&&\varphi\in (0,1),\label{1-eq3.71}\\
  &{U_s}(\varphi,m)+{V_s}(\varphi,m)=0,\quad&&\varphi\in (0,1),\label{1-eq3.72}
\end{align}
where
\begin{align*}
  &R_s=b^{-1/2}(\tilde{Q}(\varphi,\psi))p(\tilde{Q}(\varphi,\psi))\tilde{Q}_\psi(\varphi,\psi), \\
 &F_s=\frac12b^{-1}(\tilde{Q}(\varphi,\psi))p(\tilde{Q}(\varphi,\psi))\tilde{Q}_\psi(\varphi,\psi), \\
&U_{s,0}=-\frac12b^{-1}(\tilde{Q}(0,\psi))p(\tilde{Q}(0,\psi))\tilde{Q}_\psi(0,\psi)W_0(\psi)-W_\psi(0,\psi),\\
&W_{\psi}(0,\psi)=-\frac{1}{c_*^{2+4/(\gamma-1)}}\left(\sin\Theta(y)\right)''\cos\Theta(y),\\
&V_{s,0}=\frac12b^{-1}(\tilde{Q}(0,\psi))p(\tilde{Q}(0,\psi))\tilde{Q}_\psi(0,\psi)Z_0(\psi)+Z_\psi(0,\psi),\\
&Z_{\psi}(0,\psi)=\frac{1}{c_*^{2+4/(\gamma-1)}}\left(\sin\Theta(y)\right)''\cos\Theta(y).
\end{align*}
It follows from \eqref{1-eq2.91}, \eqref{1-eq2.9}, \eqref{1-eq2.36},
 \eqref{1-eq3.16} and \eqref{1-eq3.17} that
\begin{align}
    &|R_s(\varphi,\psi)|\leq \frac{\mu_{10}\beta_3}{\beta_1^{5/4}}\varphi^{-1/4}, \ \
    |F_s(\varphi,\psi)|\leq \frac{\mu_{10}\beta_3}{\beta_1},\ \ (\varphi,\psi)\in (0,1)\times(0,m), \label{1-eqr}\\
    &|W_{\psi}(0,\psi),Z_{\psi}(0,\psi)|\leq\mu_{10}\sigma,\ \
    |{U_{s,0}}(\psi),{V_{s,0}}(\psi)|\leq\frac{\mu_{10}(\beta_2\beta_3+\beta_1\sigma)}{\beta_1},\ \ \psi\in (0,m),\label{1-equ}
\end{align}
where  ${\beta} _i$ $(i= 1,2,3)$ are positive constants given by  \eqref{1-beta*}  and  $\mu_{10}$ is a positive constant depending only on $\hat{\mu}$ and $\gamma$. We first estimate $|U_s|$ and $|V_s|$. Assume that
\begin{align*}
\Sigma_\pm&: \Psi_\pm^{\prime}=\pm b^{1/2}(\tilde{Q}(\varphi,\Psi_\pm(\varphi))),  \\
0&<\Psi_\pm<m,\quad\hat{\varphi}_\pm<\varphi<\check{\varphi}_\pm\quad(0\leq\hat{\varphi}_\pm<\check{\varphi}_\pm\leq 1)
\end{align*}
are positive and negative characteristics. On $\Sigma_+$, if $\hat{\varphi}_+\leq\varphi\leq\check{\varphi}_+$, $U_s$ satisfies
\begin{align*}
\frac{d}{d\varphi}U_s(\varphi,\Psi_+(\varphi))=&
-R_s(\varphi,\Psi_+(\varphi))U_s(\varphi,\Psi_+(\varphi))-W(\varphi,\Psi_+(\varphi))\partial_\varphi F_s(\varphi,\Psi_+(\varphi))\\
=&-R_s(\varphi,\Psi_+(\varphi))U_s(\varphi,\Psi_+(\varphi))-W(\varphi,\Psi_+(\varphi))\frac{d}{d\varphi}F_s(\varphi,\Psi_+(\varphi))\\
&\quad+b^{1/2}(\tilde{Q}(\varphi,\Psi_+(\varphi)))W(\varphi,\Psi_+(\varphi))\partial_\psi F_s(\varphi,\Psi_+(\varphi)),\\
\end{align*}
which is equivalent to
\begin{align*}
&\bigg(U_s(\varphi,\Psi_+(\varphi))\exp\bigg\{\int_{\hat{\varphi}_+}^{\varphi}R_s(s,\Psi_+(s))ds\bigg\}\bigg)'\\
=\,&\bigg(-W(\varphi,\Psi_+(\varphi))\frac{d}{d\varphi}F_s(\varphi,\Psi_+(\varphi))
+b^{1/2}(\tilde{Q}(\varphi,\Psi_+(\varphi)))W(\varphi,\Psi_+(\varphi))
\partial_\psi F_s(\varphi,\Psi_+(\varphi))\bigg)
\\
&\quad\times\exp\bigg\{\int_{\hat{\varphi}_+}^{\varphi}R_s(s,\Psi_+(s))ds\bigg\}.
\end{align*}
Then for any $\hat{\varphi}_+\leq\varphi\leq\check{\varphi}_+$, it holds that
\begin{align}\label{1-eq3.73}
& U_s(\check{\varphi}_+,\Psi_+(\check{\varphi}_+))\exp\bigg\{\int_{\hat{\varphi}_+}^{\check{\varphi}_+}R_s(s,\Psi_+(s))ds\bigg\}
- U_s(\hat{\varphi}_+,\Psi_+(\hat{\varphi}_+))
\nonumber\\
=\,& -W(\check{\varphi}_+,\Psi_+(\check{\varphi}_+))F_s(\check{\varphi}_+,\Psi_+(\check{\varphi}_+))
\exp\bigg\{\int_{\hat{\varphi}_+}^{\check{\varphi}_+}R_s(s,\Psi_+(s))ds\bigg\}\nonumber\\
&\quad
+ W(\hat{\varphi}_+,\Psi_+(\hat{\varphi}_+))F_s(\hat{\varphi}_+,\Psi_+(\hat{\varphi}_+))
+\int_{\hat{\varphi}_+}^{\check{\varphi}_+}I_1\exp\bigg\{\int_{\hat{\varphi}_+}^{\varphi}R_s(s,\Psi_+(s))ds\bigg\}d\varphi,
\end{align}
where
\begin{align*}
I_1=W(\varphi,\Psi_+(\varphi))\bigg(\frac12R_s(\varphi,\Psi_+(\varphi))F_s(\varphi,\Psi_+(\varphi))+b^{1/2}(\tilde{Q}(\varphi,\Psi_+(\varphi)))\partial_\psi F_s(\varphi,\Psi_+(\varphi))\bigg).
\end{align*}
Similarly, on $\Sigma_-$, if $\hat{\varphi}_-\leq\varphi\leq\check{\varphi}_-$, $V$ satisfies
\begin{align*}
\frac{d}{d\varphi}V_s(\varphi,\Psi_-(\varphi))=&
R_s(\varphi,\Psi_-(\varphi))V_s(\varphi,\Psi_-(\varphi))+Z(\varphi,\Psi_-(\varphi))\partial_\varphi F_s(\varphi,\Psi_-(\varphi))\\
=&R_s(\varphi,\Psi_-(\varphi))V_s(\varphi,\Psi_-(\varphi))+Z(\varphi,\Psi_-(\varphi))\frac{d}{d\varphi}F_s(\varphi,\Psi_-(\varphi))\\
&\quad+b^{1/2}(\tilde{Q}(\varphi,\Psi_-(\varphi)))Z(\varphi,\Psi_-(\varphi))\partial_\psi F_s(\varphi,\Psi_-(\varphi)),\\
\end{align*}
which is simplified to
\begin{align*}
&\bigg(V_s(\varphi,\Psi_-(\varphi))\exp\bigg\{\int_{\hat{\varphi}_-}^{\varphi}-R_s(s,\Psi_-(s))ds\bigg\}\bigg)'\\
=\,&\bigg(Z(\varphi,\Psi_-(\varphi))\frac{d}{d\varphi}F_s(\varphi,\Psi_-(\varphi))+b^{1/2}(\tilde{Q}(\varphi,\Psi_-(\varphi)))
Z(\varphi,\Psi_-(\varphi))\partial_\psi F_s(\varphi,\Psi_-(\varphi))\bigg)\\
&\quad\times\exp\bigg\{\int_{\hat{\varphi}_-}^{\varphi}-R_s(s,\Psi_-(s))ds\bigg\}.
\end{align*}
Then for any $\hat{\varphi}_-\leq\varphi\leq\check{\varphi}_-$, one has
\begin{align}\label{1-eq3.74}
&V_s(\check{\varphi}_-,\Psi_-(\check{\varphi}_-))\exp\bigg\{\int_{\hat{\varphi}_-}^{\check{\varphi}_-}-R_s(s,\Psi_-(s))ds\bigg\}-V_s(\hat{\varphi}_-,\Psi_-(\hat{\varphi}_-))\nonumber\\
=\,&Z(\check{\varphi}_-,\Psi_-(\check{\varphi}_-))F_s(\check{\varphi}_-,\Psi_-(\check{\varphi}_-))\exp\bigg\{\int_{\hat{\varphi}_-}^{\check{\varphi}_-}-R_s(s,\Psi_-(s))ds\bigg\}
\nonumber\\
&\quad-Z(\hat{\varphi}_-,\Psi_-(\hat{\varphi}_-))F_s(\hat{\varphi}_-,\Psi_-(\hat{\varphi}_-))
+\int_{\hat{\varphi}_-}^{\check{\varphi}_-}I_2\exp\bigg\{\int_{\hat{\varphi}_-}^{\varphi}-R_s(s,\Psi_-(s))ds\bigg\}d\varphi,
\end{align}
where
\begin{align*}
I_2=Z(\varphi,\Psi_-(\varphi))\bigg(\frac12R_s(\varphi,\Psi_-(\varphi))F_s(\varphi,\Psi_-(\varphi))+b^{1/2}(\tilde{Q}(\varphi,\Psi_-(\varphi)))\partial_\psi F_s(\varphi,\Psi_-(\varphi))\bigg).
\end{align*}
It follows from \eqref{1-eq2.91}, \eqref{1-eq2.9}, \eqref{1-eq3.16}, \eqref{1-eq3.17}, \eqref{1-eq3.61} and \eqref{1-eqr} that
\begin{align*}
\left|\frac12R_sF_s+b^{1/2}(\tilde{Q})\partial_\psi F_s\right|
=\,&\left|\frac{1}{2}R_sF_s+b^{1/2}(\tilde{Q})\left(-\frac{1}{2}b^{-2}(\tilde{Q})p^2(\tilde{Q})\tilde{Q}_{\psi}^2\right.\right.\\
&\quad+\left.\left.\frac{1}{2}b^{-1}(\tilde{Q})p'(\tilde{Q})\tilde{Q}_{\psi}^2+\frac{1}{2}b^{-1}(\tilde{Q})p(\tilde{Q})\tilde{Q}_{\psi\psi}\right)\right|\\
=\,&\left|-\frac{1}{2}R_sF_s+\frac{1}{2}b^{-1/2}p'\tilde{Q}_{\psi}^2+\frac{1}{2}b^{-1/2}p\tilde{Q}_{\psi\psi}\right|\\
\leq\,&\bigg(\frac{\mu_{11}\beta_3^2}{\beta_1^{9/4}}+\frac{3\hat{\mu}\beta_6\sigma}{8\beta_1^{5/4}}\bigg)\varphi^{-1/4},
\end{align*}
where $\mu_{11}$ is a positive constant depending only on $\mu_{10}$ and $\hat{\mu}$. Combining the above inequality with \eqref{1-eq3.21}, \eqref{1-eqe} and \eqref{1-beta*}, we obtain that
\begin{align}\label{1-M_1}
\left|I_1,I_2\right|\leq \bigg(\frac{\mu_{11}\beta_2\beta_3^2}{\beta_1^{9/4}}
+\frac{3\hat{\mu}\beta_2\beta_6\sigma}{8\beta_1^{5/4}}\bigg)
\varphi^{-1/4}:=M_1\varphi^{-1/4}.
\end{align}
 Thus it follows from \eqref{1-eq3.73} and \eqref{1-eq3.74} that
\begin{align*}
&U_s(\check{\varphi}_+,\Psi_+(\check{\varphi}_+))\exp\bigg\{\int_{\hat{\varphi}_+}^{\check{\varphi}_+}R_s(s,\Psi_+(s))ds\bigg\}
-U_s(\hat{\varphi}_+,\Psi_+(\hat{\varphi}_+))
\nonumber\\
\leq\,&-W(\check{\varphi}_+,\Psi_+(\check{\varphi}_+))F_s(\check{\varphi}_+,\Psi_+(\check{\varphi}_+))
\exp\bigg\{\int_{\hat{\varphi}_+}^{\check{\varphi}_+}R_s(s,\Psi_+(s))ds\bigg\}\nonumber\\
&\quad
+W(\hat{\varphi}_+,\Psi_+(\hat{\varphi}_+))F_s(\hat{\varphi}_+,\Psi_+(\hat{\varphi}_+))
+M_1\int_{\hat{\varphi}_+}^{\check{\varphi}_+}\varphi^{-1/4}\exp\bigg\{\int_{\hat{\varphi}_+}^{\varphi}R_s(s,\Psi_+(s))ds\bigg\}d\varphi
\end{align*}
and
\begin{align*}
&V_s(\check{\varphi}_-,\Psi_-(\check{\varphi}_-))\exp\bigg\{\int_{\hat{\varphi}_-}^{\check{\varphi}_-}-R_s(s,\Psi_-(s))ds\bigg\}-V_s(\hat{\varphi}_-,\Psi_-(\hat{\varphi}_-))\nonumber\\
\geq\,&Z(\check{\varphi}_-,\Psi_-(\check{\varphi}_-))F_s(\check{\varphi}_-,\Psi_-(\check{\varphi}_-))
\exp\bigg\{\int_{\hat{\varphi}_-}^{\check{\varphi}_-}-R_s(s,\Psi_-(s))ds\bigg\}\nonumber\\
&\quad
-Z(\hat{\varphi}_-,\Psi_-(\hat{\varphi}_-))F_s(\hat{\varphi}_-,\Psi_-(\hat{\varphi}_-))
-M_1\int_{\hat{\varphi}_-}^{\check{\varphi}_-}\varphi^{-1/4}\exp\bigg\{\int_{\hat{\varphi}_-}^{\varphi}-R_s(s,\Psi_-(s))ds\bigg\}d\varphi.
\end{align*}
That is
\begin{align}\label{1-eq3.75}
&U_s(\check{\varphi}_+,\Psi_+(\check{\varphi}_+))-M_1\int_{0}^{\check{\varphi}_+}\varphi^{-1/4}\exp\bigg\{\int_{\varphi}^{\check{\varphi}_+}-R_s(s,\Psi_+(s))ds\bigg\}d\varphi \nonumber\\
\leq\,&\bigg(U_s(\hat{\varphi}_+,\Psi_+(\hat{\varphi}_+))-M_1\int_{0}^{\hat{\varphi}_+}\varphi^{-1/4}\exp\bigg\{\int_{\varphi}^{\hat{\varphi}_+}-R_s(s,\Psi_+(s))ds\bigg\}d\varphi\bigg)\exp\bigg\{\int_{\hat{\varphi}_+}^{\check{\varphi}_+}-R_s(s,\Psi_+(s))ds\bigg\}\nonumber\\
&\quad+W(\hat{\varphi}_+,\Psi_+(\hat{\varphi}_+))F_s(\hat{\varphi}_+,\Psi_+(\hat{\varphi}_+))\exp\bigg\{\int_{\hat{\varphi}_+}^{\check{\varphi}_+}-R_s(s,\Psi_+(s))ds\bigg\} \nonumber\\
&\quad-W(\check{\varphi}_+,\Psi_+(\check{\varphi}_+))F_s(\check{\varphi}_+,\Psi_+(\check{\varphi}_+))
\end{align}
and
\begin{align}\label{1-eq3.76}
&-V_s(\check{\varphi}_-,\Psi_-(\check{\varphi}_-))-M_1\int_{0}^{\check{\varphi}_-}\varphi^{-1/4}\exp\bigg\{\int_{\varphi}^{\check{\varphi}_-}R_s(s,\Psi_-(s))ds\bigg\}d\varphi \nonumber\\
\leq\,&\bigg(-V_s(\hat{\varphi}_-,\Psi_-(\hat{\varphi}_-))-M_1\int_{0}^{\hat{\varphi}_-}\varphi^{-1/4}\exp\bigg\{\int_{\varphi}^{\hat{\varphi}_-}R_s(s,\Psi_-(s))ds\bigg\}d\varphi\bigg)\exp\bigg\{\int_{\hat{\varphi}_-}^{\check{\varphi}_-}R_s(s,\Psi_-(s))ds\bigg\}\nonumber\\
&\quad+Z(\hat{\varphi}_-,\Psi_-(\hat{\varphi}_-))F_s(\hat{\varphi}_-,\Psi_-(\hat{\varphi}_-))\exp\bigg\{\int_{\hat{\varphi}_-}^{\check{\varphi}_-}R_s(s,\Psi_-(s))ds\bigg\} \nonumber\\
&\quad-Z(\check{\varphi}_-,\Psi_-(\check{\varphi}_-))F_s(\check{\varphi}_-,\Psi_-(\check{\varphi}_-)).
\end{align}
\par Now, we estimate $U_s$ by the method of characteristics. Define functions $r_2(s)$ $(0\leq s<\varphi)$ and $r_2^*(s)$ $(0\leq s<\varphi)$ as follows
\begin{equation*}
\begin{aligned}
r_2(s)=\left\{
\begin{aligned}
&-R_s(s,\Psi_j(s)),
\quad
\varphi_{j}\le s<\varphi_{j-1},\, 1\le j\le k+1\mbox{ and $j$ is odd},
\\[2.5 mm]
&R_s(s,\Psi_j(s)),
\quad
\varphi_{j}\le s<\varphi_{j-1},\,1\le j\le k+1\mbox{ and $j$ is even}.
\end{aligned}
\right.\\
r_2^*(s)=\left\{
\begin{aligned}
&R_s(s,\Psi_j^*(s)),
\quad
\varphi_{j}^*\le s<\varphi_{j-1}^*,\, 1\le j\le k+1\mbox{ and $j$ is odd},
\\[2.5 mm]
&-R_s(s,\Psi_j^*(s)),
\quad
\varphi_{j}^*\le s<\varphi_{j-1}^*,\,1\le j\le k+1\mbox{ and $j$ is even},
\end{aligned}
\right.
\end{aligned}
\end{equation*}
 where $(\varphi_j,\Psi_j)$ and  $(\varphi_j^*,\Psi_j^*)$ are defined in Figures \ref{1-pic2}--\ref{1-pic5}. For $1\le j\le k+1$, it follows from \eqref{1-eq3.75} and \eqref{1-eq3.76} that
\begin{align}\label{1-eq3.755}
&U_s(\varphi_{j-1},\psi_{j-1})-M_1\int_{0}^{\varphi_{j-1}}\varphi^{-1/4}\exp\bigg\{\int_{\varphi}^{\varphi_{j-1}}r_2(s)ds\bigg\}d\varphi \nonumber\\
\leq\,&\bigg(U_s(\varphi_j,\psi_j)-M_1\int_{0}^{\varphi_j}\varphi^{-1/4}\exp\bigg\{\int_{\varphi}^{\varphi_j}r_2(s)ds\bigg\}d\varphi\bigg)
\exp\bigg\{\int_{\varphi_j}^{\varphi_{j-1}}r_2(s)ds\bigg\}\nonumber\\
&\quad+W(\varphi_j,\psi_j)F_s(\varphi_j,\psi_j)\exp\bigg\{\int_{\varphi_j}^{\varphi_{j-1}}r_2(s)ds\bigg\} -W(\varphi_{j-1},\psi_{j-1})F_s(\varphi_{j-1},\psi_{j-1}),\mbox{ if $j$ is odd},
\end{align} and
\begin{align}\label{1-eq3.766}
&-V_s(\varphi_{j-1},\psi_{j-1})-M_1\int_{0}^{\varphi_{j-1}}\varphi^{-1/4}\exp\bigg\{\int_{\varphi}^{\varphi_{j-1}}r_2(s)ds\bigg\}d\varphi \nonumber\\
\leq\,&\bigg(-V_s(\varphi_j,\psi_j)-M_1\int_{0}^{\varphi_j}\varphi^{-1/4}\exp\bigg\{\int_{\varphi}^{\varphi_j}r_2(s)ds\bigg\}d\varphi\bigg)
\exp\bigg\{\int_{\varphi_j}^{\varphi_{j-1}}r_2(s)ds\bigg\}\nonumber\\
&\quad+Z(\varphi_j,\psi_j)F_s(\varphi_j,\psi_j)\exp\bigg\{\int_{\varphi_j}^{\varphi_{j-1}}r_2(s)ds\bigg\}
-Z(\varphi_{j-1},\psi_{j-1})F_s(\varphi_{j-1},\psi_{j-1}),\mbox{ if $j$ is even}.
\end{align}
If $k$ is even, one can get from \eqref{1-eq3.11}, \eqref{1-eq3.12}, \eqref{1-eq3.69}--\eqref{1-eq3.72}, \eqref{1-eq3.755} and \eqref{1-eq3.766} together with  $F_s(\varphi _j,\psi_j)=F_s(\varphi _j^*,\psi_j^*)=0$ $(1 \le j \le k)$ that
\begin{align}\label{1-eq3.79}
&U_s(\varphi_0,\psi_0)-M_1\int_{0}^{\varphi_0}\varphi^{-1/4}\exp\bigg\{\int_{\varphi}^{\varphi_0}r_2(s)ds\bigg\}d\varphi\nonumber\\
\leq\,&\bigg(U_s(\varphi_1,\psi_1)-M_1\int_{0}^{\varphi_1}\varphi^{-1/4}\exp\bigg\{\int_{\varphi}^{\varphi_1}r_2(s)ds\bigg\}d\varphi\bigg)
\exp\bigg\{\int_{\varphi_1}^{\varphi_0}r_2(s)ds\bigg\}\nonumber\\
&\quad+W(\varphi_1,\psi_1)F_s(\varphi_1,\psi_1)\exp\bigg\{\int_{\varphi_1}^{\varphi_0}r_2(s)ds\bigg\} -W(\varphi_0,\psi_0)F_s(\varphi_0,\psi_0)\nonumber\\
\leq\,&\bigg({U_{s,0}}(\psi_{k+1})+W_0(\psi_{k+1})F_s(0,\psi_{k+1})\bigg)\exp\bigg\{\int_{0}^{\varphi_0}r_2(s)ds\bigg\}\nonumber\\
&\quad+2\sum_{j=1}^{k}(-1)^{j+1}W(\varphi_j,\psi_j)F_s(\varphi_j,\psi_j)\exp\bigg
\{\int_{\varphi_j}^{\varphi_0}r_2(s)ds\bigg\}-W(\varphi_0,\psi_0)F_s(\varphi_0,\psi_0)\nonumber\\
=\,&\bigg({U_{s,0}}(\psi_{k+1})+W_0(\psi_{k+1})F_s(0,\psi_{k+1})\bigg)\exp
\bigg\{\int_{0}^{\varphi_0}r_2(s)ds\bigg\}-W(\varphi_0,\psi_0)F_s(\varphi_0,\psi_0)
\end{align}
and
\begin{align}\label{1-eq3.80}
&-V_s(\varphi_0,\psi_0)-M_1\int_{0}^{\varphi_0}\varphi^{-1/4}\exp\bigg\{\int_{\varphi}^{\varphi_0}r_2^*(s)ds\bigg\}d\varphi\nonumber\\
\leq\,&\bigg(-V_s(\varphi_1^*,\psi_1^*)-M_1\int_{0}^{\varphi_1^*}\varphi^{-1/4}\exp\bigg\{\int_{\varphi}^{\varphi_1^*}r_2^*(s)ds\bigg\}d\varphi\bigg)
\exp\bigg\{\int_{\varphi_1^*}^{\varphi_0}r_2^*(s)ds\bigg\}\nonumber\\
&\quad+Z(\varphi_1^*,\psi_1^*)F_s(\varphi_1^*,\psi_1^*)\exp\bigg\{\int_{\varphi_1^*}^{\varphi_0}r_2^*(s)ds\bigg\}-Z(\varphi_0,\psi_0)F_s(\varphi_0,\psi_0)\nonumber\\
\leq\,&\bigg(-{V_{s,0}}(\psi_{k+1}^*)+Z_0(\psi_{k+1}^*)F_s(0,\psi_{k+1}^*)\bigg)\exp\bigg\{\int_{0}^{\varphi_0}r_2^*(s)ds\bigg\}\nonumber\\
&\quad+2\sum_{j=1}^{k}(-1)^{j+1}Z(\varphi_j^*,\psi_j^*)F_s(\varphi_j^*,\psi_j^*)\exp\bigg\{\int_{\varphi_j^*}^{\varphi_0}r_2^*(s)ds\bigg\}
-Z(\varphi_0,\psi_0)F_s(\varphi_0,\psi_0)\nonumber\\
=\,&\bigg(-{V_{s,0}}(\psi_{k+1}^*)+Z_0(\psi_{k+1}^*)
F_s(0,\psi_{k+1}^*)\bigg)\exp\bigg\{\int_{0}^{\varphi_0}r_2^*(s)ds\bigg\}-Z(\varphi_0,\psi_0)F_s(\varphi_0,\psi_0).
\end{align}
Similarly, if $k$ is odd, one can obtain that
\begin{align}\label{1-eq3.79*}
&U_s(\varphi_0,\psi_0)-M_1\int_{0}^{\varphi_0}\varphi^{-1/4}\exp\bigg\{\int_{\varphi}^{\varphi_0}r_2(s)ds\bigg\}d\varphi\nonumber\\
\leq\,&\bigg(-{V_{s,0}}(\psi_{k+1})+Z_0(\psi_{k+1})F_s(0,\psi_{k+1})\bigg)
\exp\bigg\{\int_{0}^{\varphi_0}r_2(s)ds\bigg\}
-W(\varphi_0,\psi_0)F_s(\varphi_0,\psi_0)
\end{align}
and
\begin{align}\label{1-eq3.80*}
&-V_s(\varphi_0,\psi_0)-M_1\int_{0}^{\varphi_0}\varphi^{-1/4}\exp\bigg\{\int_{\varphi}^{\varphi_0}r_2^*(s)ds\bigg\}d\varphi\nonumber\\
\leq\,&\bigg({U_{s,0}}(\psi_{k+1}^*)+W_0(\psi_{k+1}^*)F_s(0,\psi_{k+1}^*)\bigg)\exp\bigg\{\int_{0}^{\varphi_0}r_2^*(s)ds\bigg\}
-Z(\varphi_0,\psi_0)F_s(\varphi_0,\psi_0).
\end{align}
Then it follows from \eqref{1-eq3.79} and \eqref{1-eq3.79*} that
\begin{align}\label{1-eq3.81}
|U_s(\varphi_0,\psi_0)|\leq J_1+J_2+J_3,
\end{align}
where
\begin{equation*}
 \begin{aligned} J_1=&\left(\big\||(U_{s,0},V_{s,0})|\big\|_{L^{\infty}((0,m))}+\big\||(W_{0},Z_{0})|\big\|_{L^{\infty}((0,m))}|F_s(0,\psi_{k+1})|\right)\exp\bigg\{\int_{0}^{\varphi_0}r_2(s)ds\bigg\},\nonumber\\
J_2=&|W(\varphi_0,\psi_0)F_s(\varphi_0,\psi_0)|,\nonumber\\
J_3=&M_1\int_{0}^{\varphi_0}\varphi^{-1/4}\exp\bigg\{\int_{\varphi}^{\varphi_0}r_2(s)ds\bigg\}d\varphi.
\end{aligned}\end{equation*}
Next, we estimate $J_i$ $(i=1,2,3)$.
 For $1\le j \le k+1$, it follows from \eqref{1-eq2.36}, \eqref{1-eq3.21}, \eqref{1-eqe}, \eqref{1-beta*} , \eqref{1-eqr} and \eqref{1-equ} that
\begin{equation*}
\begin{aligned}
J_1\leq\,&\bigg(\frac{\mu_{10}(\beta_2\beta_3+\beta_1\sigma)}{\beta_1}
+\frac{\mu_{10}\mu_2\beta_3}{\beta_1}\bigg)
\exp\bigg\{\int_{0}^{\varphi_0}\frac{\mu_{10}\beta_3}{\beta_1^{5/4}}s^{-1/4}ds\bigg\}\\
\leq\,&\frac{\mu_{10}(2\beta_2\beta_3+\beta_1\sigma)}{\beta_1}\exp\bigg\{\frac{\mu_{12}\beta_3}{\beta_1^{5/4}}\bigg\}\nonumber,\\
J_2
\leq\,&\frac{\mu_2(1+\mu_6\beta_3)\mu_{10}\beta_3}{\beta_1}
=\frac{\mu_{10}\beta_2\beta_3}{\beta_1}
\leq\frac{\mu_{10}\beta_2\beta_3}{\beta_1}\exp\bigg\{\frac{\mu_{12}\beta_3}{\beta_1^{5/4}}\bigg\},\\
J_3
\leq\,& M_1\exp\bigg\{\frac{\mu_{12}\beta_3}{\beta_1^{5/4}}\bigg\}\int_{0}^{\varphi_0}\varphi^{-1/4}d\varphi
\leq\frac{4M_1}{3}\exp\bigg\{\frac{\mu_{12}\beta_3}{\beta_1^{5/4}}\bigg\}\varphi_0^{3/4}\\
\leq\,&\bigg(\frac{4\mu_{11}\beta_2\beta_3^2}{3\beta_1^{9/4}} + \frac{\hat{\mu}\beta_2\beta_6\sigma}{ 2\beta_1^{5/4}} \bigg)
 \exp\bigg\{ \frac{\mu_{12} \beta_3}{\beta_1^{5/4}} \bigg\},
\end{aligned}
\end{equation*}
where $\mu_{12}$ is a positive constant depending only on $\mu_{10}$.
Then we obtain that
\begin{align}\label{1-eq3.81*}
|U_s(\varphi_0,\psi_0)|
\leq\,&\bigg(\frac{\mu_{10}\left(3\beta_2\beta_3+\beta_1\sigma\right)}{\beta_1}
+\frac{4\mu_{11}\beta_2\beta_3^2}{3\beta_1^{9/4}}
+ \frac{\hat{\mu} \beta_2\beta_6\sigma}{ 2\beta_1^{5/4}}\bigg)\exp\bigg\{\frac{\mu_{12}\beta_3}{\beta_1^{5/4}}\bigg\}
\nonumber\\
\leq\,&\mu_{13}\bigg(\frac{\beta_2\beta_3+\beta_1\sigma}{\beta_1}
+\frac{\beta_2\beta_3^2}{\beta_1^{9/4}}
+ \frac{\beta_2\beta_6\sigma}{ \beta_1^{5/4}}\bigg)\exp\bigg\{\frac{\mu_{12}\beta_3}{\beta_1^{5/4}}\bigg\},
\end{align}
where $\mu_{13}$ is a positive constant depending only on $\hat{\mu}$, $\mu_{10}$ and $\mu_{11}$. The estimate of $|V_s|$ can be derived similarly.
\par In the following, we turn to estimate $|U_s-V_s|$. If $k$ is even, one can get from \eqref{1-eq3.79} and \eqref{1-eq3.80} that
\begin{align}\label{1-eq3.83}
\left|U_s(\varphi_0,\psi_0)-V_s(\varphi_0,\psi_0)\right|\leq\,&J_4+J_5+J_6,
\end{align}
where
\begin{equation*}
  \begin{aligned}
J_4=\,&\left|\bigg({U_{s,0}}(\psi_{k+1})+W_0(\psi_{k+1})F_s(0,\psi_{k+1})\bigg)
\exp\bigg\{\int_{0}^{\varphi_0}r_2(s)ds\bigg\}\right.\\
&\quad\left.+\bigg(-{V_{s,0}}(\psi_{k+1}^*)+Z_0(\psi_{k+1}^*)F_s(0,\psi_{k+1}^*)\bigg)
\exp\bigg\{\int_{0}^{\varphi_0}r_2^*(s)ds\bigg\}\right|,\\
J_5=\,&\left|\left(W(\varphi_0,\psi_0)+Z(\varphi_0,\psi_0)\right)F_s(\varphi_0,\psi_0)\right|,\\
J_6=\,&M_1\int_{0}^{\varphi_0}\varphi^{-1/4}\exp\bigg\{\int_{\varphi}^{\varphi_0}r_2(s)ds\bigg\}d\varphi
+M_1\int_{0}^{\varphi_0}\varphi^{-1/4}\exp\bigg\{\int_{\varphi}^{\varphi_0}r_2^*(s)ds\bigg\}d\varphi.
  \end{aligned}
\end{equation*}
For the first term $J_4$, it follows from \eqref{1-eqvar}, \eqref{1-eqr} and \eqref{1-equ} that
\begin{align*}
J_4
=\,&\left|W_{\psi}(0,\psi_{k+1})\exp\bigg\{\int_{0}^{\varphi_0}r_2(s)ds\bigg\}
+Z_{\psi}(0,\psi_{k+1}^*)\exp\bigg\{\int_{0}^{\varphi_0}r_2^*(s)ds\bigg\}\right|\\
=\,&\left|W_{\psi}(0,\Psi_+(0))+Z_{\psi}(0,\Psi_-(0))\right|\exp\bigg\{\int_{0}^{\varphi_0}r_2(s)ds\bigg\}\\
&\quad+\left|Z_{\psi}(0,\Psi_-(0))\right|\left|\exp\bigg\{\int_{0}^{\varphi_0}r_2^*(s)ds\bigg\}-\exp\bigg\{\int_{0}^{\varphi_0}r_2(s)ds\bigg\}\right|\\
\leq\,& \left|
  \frac{1}{c_*^{2+4/(\gamma-1)}}
  \left(
    \left(
      \sin\Theta(y)
    \right)''\cos\Theta(y)
  \right)'\frac{dy}{d\psi}
   (\Psi_+(0)-\Psi_-(0))
\right| \exp\bigg\{\frac{\mu_{12}\beta_3}{\beta_1^{5/4}}\bigg\} \\
&\quad+\frac{2\mu_{10}\mu_{12}\beta_3\sigma}{\beta_1^{5/4}}
\exp\bigg\{\frac{\mu_{12}\beta_3}{\beta_1^{5/4}}\bigg\}\varphi_0^{3/4}\\
\leq\,& \left|
  \frac{1}{c_*^{2+4/(\gamma-1)}}
    \bigg(\left(\sin\Theta(y)\right)'''\cos\Theta(y)\frac{dy}{d\psi}
    - \left(\sin\Theta(y)\right)''\sin\Theta(y)\Theta'(y)
  \frac{dy}{d\psi}\bigg)
  (\Psi_+(0)-\Psi_-(0))
\right|\\
&\quad\times\exp\bigg\{\frac{\mu_{12}\beta_3}{\beta_1^{5/4}}\bigg\}
+\frac{2\mu_{10}\mu_{12}\beta_3\sigma}{\beta_1^{5/4}}
\exp\bigg\{\frac{\mu_{12}\beta_3}{\beta_1^{5/4}}\bigg\}\varphi_0^{3/4}\\
\leq\,&\frac{\mu_{14}\sigma}{\beta_1^{5/4}}\bigg(\beta_1(1+\sigma)+\beta_3
\bigg)
\exp\bigg\{\frac{\mu_{12}\beta_3}{\beta_1^{5/4}}\bigg\}\varphi_0^{3/4},
\end{align*}
where $\mu_{14}$ is a positive constant depending only on $\mu_3$, $\mu_{10}$, $\mu_{12}$, $R_0$ and $\gamma$.
Next, for $J_5$ and $J_6$, it follows from \eqref{1-eq3.22}, \eqref{1-eqe}, \eqref{1-eqr} and $J_3$ that
\begin{equation*}
\begin{aligned}
J_5
\leq\,&\frac{\mu_{10}\beta_3}{\beta_1}\bigg(\frac{\mu_4\sigma}{\beta_1^{1/4}}
+\frac{8\hat{\mu}\mu_2\beta_3}{9\beta_1^{5/4}}\bigg)
(1+\mu_6\beta_3)\varphi_0^{3/4}
\leq\mu_{15} \bigg( \frac{\beta_3\sigma + \beta_3^2\sigma}{\beta_1^{5/4}} + \frac{\beta_3^2 + \beta_3^3}{\beta_1^{9/4}} \bigg) \varphi_0^{3/4}\\
\leq\,&\mu_{15} \bigg( \frac{\beta_3\sigma + \beta_3^2\sigma}{\beta_1^{5/4}} + \frac{\beta_3^2 + \beta_3^3}{\beta_1^{9/4}} \bigg)
\exp\bigg\{\frac{\mu_{12}\beta_3}{\beta_1^{5/4}}\bigg\}
\varphi_0^{3/4},\\
J_6
\leq\,&\bigg(\frac{8\mu_{11}\beta_2\beta_3^2}{ 3\beta_1^{9/4}} + \frac{\hat{\mu} \beta_2\beta_6\sigma}{ \beta_1^{5/4}} \bigg)
 \exp\bigg\{ \frac{\mu_{12} \beta_3}{\beta_1^{5/4}} \bigg\}\varphi_0^{3/4},
\end{aligned}
\end{equation*}
where $\mu_{15}$ is a positive constant depending only on $\gamma$, $R_0$, $\hat{\mu}$, $\mu_4$, $\mu_6$ and $\mu_{10}$. Collecting  the above estimates yields that
\begin{align}\label{1-eq3.84}
&\left|U_s(\varphi_0,\psi_0)-V_s(\varphi_0,\psi_0)\right|\nonumber\\
\leq\,&\mu_{16}\bigg(\frac{(1+\sigma)\sigma}{\beta_1^{1/4}}
+\frac{(\beta_3+\beta_3^2)\sigma}{\beta_1^{5/4}}
+\frac{(1+\beta_2)\beta_3^2+\beta_3^3}{ \beta_1^{9/4}} \bigg)
\exp\bigg\{ \frac{\mu_{12} \beta_3}{\beta_1^{5/4}} \bigg\}\varphi_0^{3/4}\nonumber\\
&\quad+ \frac{\hat{\mu} \beta_2\beta_6\sigma}{ \beta_1^{5/4}}
 \exp\bigg\{ \frac{\mu_{12} \beta_3}{\beta_1^{5/4}} \bigg\}\varphi_0^{3/4},
\end{align}
where $\mu_{16}$ is a positive constant depending only on $\mu_{11}$, $\mu_{14}$ and $\mu_{15}$.
If $k$ is odd, one can derive the same estimates from \eqref{1-eq3.79*} and \eqref{1-eq3.80*}.
\par
Under the assumptions of \eqref{1-eqe1}, we consider the function
$$ f_2(t)=\exp\bigg\{\frac{4^{5/4}\mu_{12}t}{(3\mu_1)^{5/4}}\bigg\}-1-\mu_{17} t,  \ \ t>0,$$
where $\mu_{17}>\frac{4^{5/4}\mu_{12}}{(3\mu_1)^{5/4}}$ is a positive constant depending only on $\gamma$, $R_0$, $\sigma_1$ and $\mu_{12}$. A direct computation yields that $f_2(0)=0$ and $ f_2'(0)=\frac{4^{5/4}\mu_{12}}{(3\mu_1)^{5/4}}-\mu_{17}<0$. Then there exists a small positive constant $N_2$ such that if $0<\beta_3 \leq N_2$, the following estimate holds:
\begin{align}\label{1-eqe2}
\exp\bigg\{\frac{\mu_{12}\beta_3}{\beta_1^{5/4}}\bigg\}-1\leq
\exp\bigg\{\frac{4^{5/4}\mu_{12}\beta_3}{(3\mu_1)^{5/4}}\bigg\}-1
\le \mu_{17}\beta_3.
\end{align}
 Combining the two cases  with \eqref{1-beta*}, \eqref{1-eq3.81*}, \eqref{1-eq3.84} and \eqref{1-eqe2} yields that
\begin{align}\label{1-equ1}
&|\left(U_s(\varphi_0,\psi_0),V_s(\varphi_0,\psi_0)\right)|\nonumber\\
\leq\,&\mu_{13}\bigg(\frac{\beta_2\beta_3+\beta_1\sigma}{\beta_1}
+\frac{\beta_2\beta_3^2}{\beta_1^{9/4}}
+ \frac{\beta_2\beta_6\sigma}{ \beta_1^{5/4}}\bigg)(1+\mu_9\mu_{17}\sigma)\nonumber\\
\leq\,&\left(M_2\beta_6+M_3\right)\sigma
\end{align}
and
\begin{align}\label{1-equv}
&\left|U_s(\varphi_0,\psi_0)-V_s(\varphi_0,\psi_0)\right|\nonumber\\
\leq\,&\mu_{16}\bigg(\frac{(1+\sigma)\sigma}{\beta_1^{1/4}}
+\frac{(\beta_3+\beta_3^2)\sigma}{\beta_1^{5/4}}
+\frac{(1+\beta_2)\beta_3^2+\beta_3^3}{ \beta_1^{9/4}} \bigg)
(1+\mu_9\mu_{17}\sigma)\varphi_0^{3/4}\nonumber\\
&\quad+ \frac{\hat{\mu} \beta_2\beta_6\sigma}{ \beta_1^{5/4}}
 (1+\mu_9\mu_{17}\sigma)\varphi_0^{3/4}
 \leq\bigg(M_4\beta_6
+M_5\bigg)\sigma\varphi_0^{3/4},
\end{align}
with
\begin{equation}\label{1-M2}
\begin{cases}
\begin{aligned}
&M_2=\frac{\mu_{13}\left(1+\mu_9\mu_{17}\sigma\right)\beta_2}{\beta_1^{5/4}},\ \
M_4=\frac{\hat{\mu}(1+\mu_9\mu_{17}\sigma)\beta_2}{\beta_1^{5/4}},\\
&M_3 =\mu_{13} \bigg( \frac{\mu_9\beta_2+\beta_1}{\beta_1} + \frac{\mu_9^2\beta_2\sigma}{\beta_1^{9/4}} \bigg)
\left(1 + \mu_9\mu_{17}\sigma\right),\\
&M_5=\mu_{16}\bigg(\frac{1+\sigma}{\beta_1^{1/4}}
+\frac{\mu_9\sigma+\mu_9^2\sigma^2}{\beta_1^{5/4}}
+\frac{\mu_9^2(1+\beta_2)\sigma+\mu_9^3\sigma^2}{ \beta_1^{9/4}} \bigg) \left(1 + \mu_9\mu_{17}\sigma\right).\end{aligned}
\end{cases}
\end{equation}
Thus \eqref{1-eq3.62}, \eqref{1-eq3.621} and \eqref{1-eq3.63} can be obtained.
 Furthermore, it follows from \eqref{1-eq3.34} and \eqref{1-eq3.35} that
\begin{align}\label{1-jie1}
\beta_1\ge  \frac{1} {c_*^{1+2/(\gamma-1)}R_0}-\tilde{\beta}_1\sigma_1:=\hat{\beta}_1,\quad
\beta_2\le  \frac{1}{c_*^{1+2/(\gamma-1)}R_0}+\tilde{\beta}_2\sigma_1:=\hat{\beta}_2.
 \end{align}
 Combining \eqref{1-qq2}, \eqref{1-eqr}, \eqref{1-equ1} and \eqref{1-jie1} yields that
\begin{align*}
|Q_{\varphi\psi}(\varphi,\psi)|
=\,&\bigg|-F_sQ_{\varphi}-\frac{1}{2}(U_s+V_s)\bigg|\leq |F_sQ_{\varphi}|+\frac{1}{2}|U_s+V_s|\\
\leq \,&\left(M_2\beta_6+M_3+\frac{\mu_9\mu_{10}\beta_2}{\beta_1}\right)\sigma\leq(M_6\beta_6+M_7)\sigma,
\end{align*}
with
\begin{equation}\label{1-M3} M_6=\frac{\mu_{13}\left(1+\mu_9\mu_{17}\sigma_1\right)\hat{\beta}_2}{\hat{\beta}_1^{5/4}},\quad M_7=\mu_{13} \bigg( 1+\frac{\mu_9\hat{\beta}_2}{\hat{\beta}_1} + \frac{\mu_9^2\hat{\beta}_2\sigma_1}{\hat{\beta}_1^{9/4}} \bigg)
\left(1 + \mu_9\mu_{17}\sigma_1\right)+\frac{\mu_9\mu_{10}\hat{\beta}_2}{\hat{\beta}_1}.
\end{equation}
Finally,
owing to \eqref{1-eq2.91}, \eqref{1-eq3.16}, \eqref{1-eq3.17}, \eqref{1-eqe1}, \eqref{1-qq2}, \eqref{1-eqr} and \eqref{1-equv}, it holds that
\begin{equation*}
  \begin{aligned}
  |Q_{\varphi\varphi}(\varphi,\psi)|=\,&\frac{1}{2}\left|b^{1/2}(\tilde{Q})(U_s-V_s)\right|
\leq \bigg(M_8\beta_6+M_9\bigg)\sigma\varphi^{1/2},\\
|Q_{\psi\psi}(\varphi,\psi)|
=\,& \left|-2F_sQ_{\psi}+\frac{1}{2}b^{-1/2}(\tilde{Q})(U_s-V_s)\right|
\leq 2\left|F_sQ_{\psi}\right|+\frac{1}{2}\left|b^{-1/2}(\tilde{Q})(U_s-V_s)\right|\\
\leq \,& \frac{2\mu_{10}\beta_3^2}{\beta_1}\varphi+\frac{2\beta_2^{1/4}}{3\hat{\mu}}|U_s-V_s|\varphi^{1/4}
\leq \bigg(M_{10}\beta_6+M_{11}\bigg)\sigma\varphi,
  \end{aligned}
\end{equation*}
where
\begin{equation}\label{1-M4}
\begin{cases}
\begin{aligned}
&M_8=\frac{\mu_{18}\hat{\beta}_2}{\hat{\beta}_1^{3/2}}
\left(1+\mu_9\mu_{17}\sigma_1\right),\ \  M_{10}=\frac{5\left(1+\mu_9\mu_{17}\sigma\right)}{6},\\
&M_9=\mu_{18}\bigg(\frac{1+\sigma_1}{\hat{\beta}_1^{1/2}}
+\frac{\mu_9\sigma_1+\mu_9^2\sigma_1^2}{\hat{\beta}_1^{3/2}}
+\frac{\mu_9^2(1+\hat{\beta}_2)\sigma_1+\mu_9^3\sigma_1^2}{ \hat{\beta}_1^{5/2}} \bigg) \left(1 + \mu_9\mu_{17}\sigma_1\right),\\
&M_{11}=\frac{2\mu_{16}\hat{\beta}_2^{1/4}}{3\hat{\mu}}\bigg(\frac{1+\sigma_1}{\hat{\beta}_1^{1/4}}
+\frac{\mu_9\sigma_1+\mu_9^2\sigma_1^2}{\hat{\beta}_1^{5/4}}
+\frac{\mu_9^2(1+\hat{\beta}_2)\sigma_1+\mu_9^3\sigma_1^2}{ \hat{\beta}_1^{9/4}} \bigg) \left(1 + \mu_9\mu_{17}\sigma_1\right)\\
&\qquad\ \ \ +\frac{2\mu_9^2\mu_{10}\sigma_1}{\hat{\beta}_1}.
\end{aligned}\end{cases}\end{equation}
Here $\mu_{18}$ is a positive constant depending only on $\hat{\mu}$, $\mu_9$, $\mu_{17}$. Hence \eqref{1-eq3.66}-\eqref{1-eq3.65} are obtained.
\end{proof}
Note that if \begin{align}\label{1-sigma1-11}\sigma\le \frac{1}{10\mu_9\mu_{17}+1},\end{align} one has
$$
0< M_{10}=\frac{5\left(1+\mu_9\mu_{17}\sigma\right)}{6}\le \frac{11}{12}.
$$
Combining this with \eqref{1-eqe2} and the assumptions of Proposition \ref{1-pro3.2}, we choose
\begin{align}\label{1-sigma1}
\sigma_2=\min\left\{\sigma_1, \frac{N_2}{\mu_9},\frac{1}{10\mu_9\mu_{17}+1}\right\},
\end{align}
and
\begin{align}
\beta_4=M_8\beta_6+M_9,\quad
\beta_5=M_6\beta_6+M_7,\quad
\beta_6=12M_{11}.\label{1-eq3.87}
\end{align}
Then if $0< \sigma \leq \sigma_2$, it  follows from \eqref{1-eq3.64}--\eqref{1-eq3.66} that
\begin{align*}
|Q_{\varphi\varphi}(\varphi,\psi)|\leq\beta_4\sigma\varphi^{1/2},\ \
|Q_{\varphi\psi}(\varphi,\psi)|\leq\beta_5\sigma,\ \
|Q_{\psi\psi}(\varphi,\psi)|\leq\beta_6\sigma\varphi,\ \ (\varphi,\psi)\in(0,1)\times(0,m).
\end{align*}
Define
\begin{align*}
\mathscr{S}_2=\Big\{&\tilde{Q}\in C^{1,1}([0,1]\times[0,m]): \tilde{Q}\text{~satisfies~}\eqref{1-eq3.16*}-\eqref{1-eq3.17*}\text{~with~}\eqref{1-eq3.34},\\
&\quad|\tilde{Q}_{\varphi\varphi}(\varphi,\psi)|\leq\beta_4\sigma\varphi^{1/2},
|\tilde{Q}_{\varphi\psi}(\varphi,\psi)|\leq\beta_5\sigma,|\tilde{Q}_{\psi\psi}(\varphi,\psi)|\leq\beta_6\sigma\varphi\Big\}.
\end{align*}
By an argument similar to that in the proof of  Theorem \ref{1-th3.1}, one can prove that
\begin{theorem}
\label{1-th3.2}
Given  $g\in C^4([-R_0\sin\theta_0,R_0\sin\theta_0])$ satisfying \eqref{1-eq2.1} and \eqref{1-eq2.43}. Then if
$\sigma\le \sigma_2$ with $\sigma_2$ given by \eqref{1-sigma1}, the problem \eqref{1-eq2.221}--\eqref{1-eq2.225} admits at least one weak solution
$Q\in C^{1,1}([0,1]\times[0,m])$ satisfying \eqref{1-eq3.44} and
\begin{align}\label{1-eq3.89}
|Q_{\varphi\varphi}(\varphi,\psi)|\leq\beta_4\sigma\varphi^{1/2},\ \
|Q_{\varphi\psi}(\varphi,\psi)|\leq\beta_5\sigma,\ \
|Q_{\psi\psi}(\varphi,\psi)|\leq\beta_6\sigma\varphi,\ \
(\varphi,\psi)\in(0,1)\times(0,m),
\end{align}
where $\beta _i$ $(i= 4,5,6)$ are positive constants given by \eqref{1-eq3.87}.
\end{theorem}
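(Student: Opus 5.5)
The plan is to rerun the Schauder fixed-point argument of Theorem \ref{1-th3.1}, this time on the smaller set $\mathscr{S}_2$ in place of $\mathscr{S}_1$.

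The first step is to check that $J$ maps $\mathscr{S}_2$ into itself. Take $\tilde Q\in\mathscr{S}_2$. Since $\sigma\le\sigma_2\le\sigma_1$, the computations \eqref{1-eq3.31}--\eqref{1-eq3.38} carried out before Theorem \ref{1-th3.1} still apply, so $J(\tilde Q)=Q$ satisfies \eqref{1-eq3.16*}--\eqref{1-eq3.17*} with \eqref{1-eq3.34}. Because $\tilde Q$ also satisfies \eqref{1-eq3.61} with the constants $\beta_4,\beta_5,\beta_6$ from \eqref{1-eq3.87}, Proposition \ref{1-pro3.2} applies and gives $Q\in C^{1,1}$ together with \eqref{1-eq3.66}--\eqref{1-eq3.65}.

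The bounds then close because $\beta_6$ enters only linearly in the new constants. By \eqref{1-sigma1-11} we have $M_{10}\le 11/12$, so
\[
M_{10}\beta_6+M_{11}\le \tfrac{11}{12}\beta_6+\tfrac{1}{12}\beta_6=\beta_6
\]
when $\beta_6=12M_{11}$. The $\varphi\varphi$ and $\varphi\psi$ bounds reproduce exactly $\beta_4$ and $\beta_5$ by their definitions in \eqref{1-eq3.87}. One point needs care: $M_2,\dots,M_5,M_{10}$ depend on $\sigma$ and on $\beta_1,\beta_2$. I would bound them uniformly using $\hat\beta_1,\hat\beta_2$ from \eqref{1-jie1} and $\sigma\le\sigma_2$, so that $\beta_4,\beta_5,\beta_6$ are genuine constants that do not depend on the particular $\tilde Q$. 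The condition $\beta_3=\mu_9\sigma\le N_2$, needed for \eqref{1-eqe2}, is guaranteed by $\sigma\le N_2/\mu_9$. Hence $J(\mathscr{S}_2)\subset\mathscr{S}_2$.

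The second step is to set up Schauder's theorem. I equip $\mathscr{S}_2$ with the $L^\infty$ norm. The set is convex, because every constraint is a two-sided pointwise bound on a linear expression in $\tilde Q$ or its derivatives. It is closed in $C([0,1]\times[0,m])$, because uniform second-derivative bounds pass to uniform limits via Arzel\`a--Ascoli: a limit of such functions has Lipschitz first derivatives with the same pointwise weighted bounds, by weak-$*$ compactness of the second derivatives in $L^\infty$. The image $J(\mathscr{S}_2)$ is bounded in $C^{1,1}$, hence precompact in $C$.

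Continuity of $J$ goes exactly as in Theorem \ref{1-th3.1}. Given $\tilde Q^{(n)}\to\tilde Q^{(0)}$, extract a subsequence with $Q^{(n)}\to Q^*$ uniformly and with the derivatives converging weakly-$*$. Pass to the limit in the weak formulation, using $b(\tilde Q^{(n)})\to b(\tilde Q^{(0)})$ locally uniformly away from $\varphi=0$ and the integrable bounds near $\varphi=0$. The uniqueness part of Proposition \ref{1-pro3.1} then identifies $Q^*=Q^{(0)}$. Schauder's theorem yields a fixed point $Q=J(Q)$, which is a weak solution of \eqref{1-eq2.221}--\eqref{1-eq2.225} satisfying \eqref{1-eq3.44} and \eqref{1-eq3.89}.

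The step I expect to be the main obstacle is the self-map property, specifically keeping the constants uniform. Each $M_i$ must be controlled independently of $\tilde Q$ and of $\beta_6$ itself, apart from the explicit linear dependence. I would first verify carefully that $M_{11}$ does not involve $\beta_6$, and that the $\beta_6$-coefficient in the $Q_{\psi\psi}$ bound really is $M_{10}<1$. Otherwise the choice $\beta_6=12M_{11}$ would be circular.
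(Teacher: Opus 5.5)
Your proposal is correct and is essentially the paper's argument. The paper proves this theorem only by saying it follows as in Theorem \ref{1-th3.1}: Schauder's theorem for $J$ on $\mathscr{S}_2$, with self-mapping supplied by Proposition \ref{1-pro3.2} and the choices \eqref{1-sigma1}, \eqref{1-eq3.87}, closing exactly through your computation $M_{10}\beta_6+M_{11}\le\frac{11}{12}\beta_6+\frac{1}{12}\beta_6=\beta_6$. The points you flag do check out. Only the $\tilde Q_{\psi\psi}$ bound, through $\partial_\psi F_s$, enters the estimates. $M_{11}$ in \eqref{1-M4} is free of $\beta_6$, and $\beta_1,\beta_2$ are fixed by \eqref{1-beta*} independently of $\tilde Q$.
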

\section{Global continuous sonic-supersonic flows}\label{3glo} \noindent
\par
In this section, we study the problem \eqref{1-eq2.22}--\eqref{1-eq2.26} for $\varphi \in[1,+\infty)$, that is
\begin{align}
&Q_{\varphi\varphi}-(b(Q)Q_{\psi})_{\psi}=0,&&(\varphi,\psi)\in(1,+\infty)\times(0,m),
\label{1-eq2.226}  \\
&Q(1,\psi)=Q(1,\psi),&&\psi\in(0,m), \label{1-eq2.227}\\
&Q_\varphi(1,\psi)=Q_\varphi(1,\psi),&&\psi\in(0,m), \label{1-eq2.228} \\
&Q_\psi(\varphi,0)=0,&&\varphi\in(1,+\infty),\label{1-eq2.229}\\
&Q_{\psi}(\varphi,m)=0,&&\varphi\in(1,+\infty),\label{1-eq2.2210}
\end{align}
which is equivalent to
\begin{align}
&W_{\varphi}+b^{1/2}(Q)W_{\psi}=\frac{1}{4}b^{-1}(Q)p(Q)W(W+Z),&&(\varphi,\psi)\in(1,+\infty)\times(0,m),\label{1-la1}\\
&Z_{\varphi}-b^{1/2}(Q)Z_{\psi}=\frac{1}{4}b^{-1}(Q)p(Q)Z(W+Z),&&(\varphi,\psi)\in(1,+\infty)\times(0,m),\label{1-la2}\\
&W(1,\psi)=W(1,\psi),&&\psi\in(0,m),\label{1-la3}\\
&Z(1,\psi)=Z(1,\psi),&&\psi\in(0,m),\label{1-la4},\\
&W(\varphi,0)+Z(\varphi,0)=0,&&\varphi\in(1,+\infty),\label{1-la5}\\
&W(\varphi,m)+Z(\varphi,m)=0,&&\varphi\in(1,+\infty),\label{1-la6}\\
&Q_{\varphi}(\varphi,\psi)=\frac{1}{2}\left(W(\varphi,\psi)-Z(\varphi,\psi)\right),&& (\varphi,\psi)\in(1,+\infty)\times(0,m),\label{1-la7}\\
&Q(1,\psi)=Q(1,\psi),&&\psi\in(0,m).\label{1-la8}
\end{align}
 Then a direct computation yields that
\begin{align}
&\lim_{s\to-\infty}(-s)^{\gamma+1}b(s)=\check{\mu},
\quad\lim_{s\to-\infty}(-s)^{\gamma+2}p(s)=\check{\mu}(\gamma+1),\label{1-eq2.92}\\
&\lim_{s\to-\infty}s b^{-1}(s)p(s)=-(\gamma+1),
\quad\lim_{s\to-\infty}(-s)^{\gamma+3}p'(s)=\check{\mu}(\gamma+1)(\gamma+2),\label{1-eq2.93}
\end{align}
where $\check{\mu}$ is a positive constant depending only on $\gamma$.
\begin{remark}
\label{1-rem Q1}
{\it
According to \eqref{1-qq1} and \eqref{1-qq2}, it holds that
 \begin{align}
-\beta_2 \leq Q(1,\psi)\leq -\beta_1,\ \
-\beta_2\leq Q_{\varphi}(1,\psi)\leq -\beta_1, \quad\psi\in(0,m),\label{1-eqlaQ1}
\end{align}
from which one gets
\begin{align}
\beta_1
\leq -W(1,\psi),Z(1,\psi)\leq
\beta_2,\quad\psi\in(0,m),\label{1-eqlaw1z1}
\end{align}}
where $\beta_i $ $(i=1,2)$ are defined in \eqref{1-beta*}.
\end{remark}
\par It was shown in \cite[Lemma 2.12, Theorems 2.19 and 3.7]{WX15} that, for a given inlet $\Upsilon(y)$ of the  straight nozzle and prescribed inlet speed $|\nabla \varphi(\Upsilon(y),y)|=q_0(y)$, the condition
\begin{align*}
|q'_0(y)|\le\frac{-\Upsilon''(y)}{1+(\Upsilon'(y))^2}\,
\sqrt{\frac{-q_0^2(y)\rho(q_0^2(y))}{\rho(q_0^2(y))+2q_0^2(y)\rho'(q_0^2(y))}}
\end{align*}
guarantees a global smooth supersonic flow that remains away from the sonic and
vacuum states and that no shock forms in the  region. By \cite[Remark 3.11]{WX15},
 this inequality in the potential plane is equivalent to
 \begin{align*}
W(0,\psi)=G_{0}(\psi)-b^{1/2}(Q_{0})Q_{0}'(\psi)\le 0,\ \
Z(0,\psi)=-G_{0}(\psi)-b^{1/2}(Q_{0})Q_{0}'(\psi)\ge 0,\ \ \end{align*}
where
$$Q_{0}(\psi)=A(q_{0}(y))\big|_{y=Y_{\mathrm{in}}(\psi)},\ \
G_{0}(\psi)=\left.
\dfrac{\Upsilon''(y)\bigl(1+(\Upsilon'(y))^{2}\bigr)^
{-3/2}}{q_{0}(y)\rho\!\left(q_{0}^{2}(y)\right)}\right|_{y=Y_{\mathrm{in}}(\psi)}.$$
\par Now, the local sonic-supersonic solution obtained in Section \ref{2se} provides the initial data at $\varphi=1$ for the supersonic region. Furthermore, the condition \eqref{1-eqlaw1z1} ensures that the supersonic solution can be extended globally. Consequently, by regarding $\varphi=1$ as the initial curve, we can apply the global existence theory developed in \cite{WX15} to obtain a continuous supersonic solution for $\varphi\geq1$. In the following two subsections, we establish the $C^{0,1}$ and $C^{1,1}$ estimates for the solution of problem \eqref{1-eq2.226}--\eqref{1-eq2.2210}.
\subsection{The $C^{0,1}$ estimate away from the sonic state}\label{3-1-infty}\noindent
\par
\begin{theorem}
\label{1-thl1}
Given $(W(1,\psi),Z(1,\psi))\in C^{0,1}([0,m])\times C^{0,1}([0,m])$ satisfying \eqref{1-eqlaw1z1},
the problem \eqref{1-la1}--\eqref{1-la8} admits at least one weak solution
$(W,Z,Q)\in L^{\infty}((1,+\infty)\times(0,m))\times L^{\infty}((1,+\infty)\times(0,m))\times C_{\rm{loc}}^{1}([1,+\infty)\times[0,m])$ satisfying
\begin{equation}\label{1-lath1}
-\tilde{\beta}_1\sigma+\frac{1}{c_*^{1+2/(\gamma-1)}R_0}
\leq -W(\varphi,\psi),Z(\varphi,\psi)\leq
\tilde{\beta}_2\sigma+\frac{1}{c_*^{1+2/(\gamma-1)}R_0},
\quad(\varphi,\psi)\in(1,+\infty)\times(0,m),\\
\end{equation}
\begin{align}
&-\tilde{\beta}_2\sigma\varphi \leq Q(\varphi,\psi)+\frac{\varphi}{c_*^{1+2/(\gamma-1)}R_0}\leq \tilde{\beta}_1\sigma\varphi,\quad(\varphi,\psi)\in(1,+\infty)\times(0,m),\label{1-lath2}\\
&-\tilde{\beta}_2\sigma\leq Q_{\varphi}(\varphi,\psi)+\frac{1}{c_*^{1+2/(\gamma-1)}R_0}\leq \tilde{\beta}_1\sigma,\quad
\left|Q_{\psi}(\varphi,\psi)\right|\leq\beta_7\sigma\varphi^{(\gamma+1)/2},\quad (\varphi,\psi)\in(1,+\infty)\times(0,m),\label{1-lath3}
\end{align}
where $\tilde{\beta}_i$ $(i=1,2,3)$ are defined in \eqref{1-eq3.34} and $\beta_7$ is a positive constant to be determined below.
\end{theorem}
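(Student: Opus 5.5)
The plan is to obtain the global solution from the theory of \cite{WX15}, started at $\varphi=1$, and to derive all the bounds from an invariant-region argument for $(W,Z)$ along characteristics. By Theorem \ref{1-th3.2} and Remark \ref{1-rem Q1}, the data $(W(1,\cdot),Z(1,\cdot))$ are Lipschitz and satisfy \eqref{1-eqlaw1z1}. In particular $W(1,\psi)\le 0\le Z(1,\psi)$, which is exactly the matching condition of \cite[Remark 3.11]{WX15} rewritten in the potential plane. Hence \cite[Lemma 2.12, Theorems 2.19 and 3.7]{WX15} give a global solution $(W,Z,Q)$ on $[1,+\infty)\times[0,m]$ that stays away from the sonic and vacuum states on every bounded set in $\varphi$. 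What remains is to prove the quantitative estimates \eqref{1-lath1}--\eqref{1-lath3}. If needed, I would first prove them for the local solution on $[1,\bar\varphi]$ and then continue in $\bar\varphi$, using the a priori bounds to exclude degeneration.

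\textbf{Step 1: invariant region for $(W,Z)$.} Write $c(\varphi,\psi)=\frac14 b^{-1}(Q)p(Q)>0$. Along a positive characteristic $\Psi_+'=b^{1/2}(Q)$ we have $\frac{d}{d\varphi}W=cW(W+Z)$, and along a negative characteristic $\frac{d}{d\varphi}Z=-cZ(W+Z)$; here I use the sign of the original system \eqref{1-eq2.29}. Each equation is linear in the transported quantity, so $W<0<Z$ is preserved. Now set
\[
\overline{M}(\varphi)=\sup_\psi\max\{-W,Z\},\qquad \underline{M}(\varphi)=\inf_\psi\min\{-W,Z\}.
\]
\begin{itemize}
\item At a point where $-W\ge Z$, we have $W+Z\le0$, so $W'=cW(W+Z)\ge0$ and $-W$ cannot grow.
\item At a point where $Z\ge -W$, we have $W+Z\ge0$, so $Z'\le 0$.
\item For the lower bound: where $-W\le Z$ we get $W'\le0$, and where $Z\le -W$ we get $Z'\ge0$.
\item At the walls, \eqref{1-la5}--\eqref{1-la6} give $-W=Z$, so the reflection preserves both quantities.
\end{itemize}
Thus $\overline M$ is nonincreasing and $\underline M$ is nondecreasing. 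To make this rigorous I would use a contradiction argument. Replace the bounds by $\beta_2+\varepsilon$ and $\beta_1-\varepsilon$, take the first $\varphi^*$ at which one of them is attained, and trace the corresponding characteristic backward (reflecting at the walls as in Section \ref{2se}, with the points $(\varphi_j,\psi_j)$). The sign of the right-hand side near $\varphi^*$ contradicts the fact that the bound is first attained there. Letting $\varepsilon\to0$ gives $\beta_1\le -W,Z\le\beta_2$ on $(1,\infty)\times(0,m)$. Finally \eqref{1-beta*} and \eqref{1-eq3.34} show $\beta_1\ge \frac{1}{c_*^{1+2/(\gamma-1)}R_0}-\tilde\beta_1\sigma$ and $\beta_2\le\frac{1}{c_*^{1+2/(\gamma-1)}R_0}+\tilde\beta_2\sigma$, which is \eqref{1-lath1}.

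\textbf{Step 2: bounds on $Q$.} From $Q_\varphi=\frac12(W-Z)$ and Step 1 we get the $Q_\varphi$ part of \eqref{1-lath3}. Integrating from $\varphi=1$ and using \eqref{1-eq3.44} at $\varphi=1$ gives \eqref{1-lath2}. For $Q_\psi$ I use the compatibility relation $Q_\psi=-\frac12 b^{-1/2}(Q)(W+Z)$, the analogue of \eqref{1-eq3.15}. By \eqref{1-lath1}, $|W+Z|\le\beta_2-\beta_1\le(\tilde\beta_1+\tilde\beta_2)\sigma$. By \eqref{1-eq2.92}, $b^{-1/2}(s)\le C(-s)^{(\gamma+1)/2}$ for $s\le-\beta_1$, and \eqref{1-lath2} gives $-Q\le C\varphi$. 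Together these give $|Q_\psi|\le\beta_7\sigma\varphi^{(\gamma+1)/2}$, with $\beta_7$ depending only on $\gamma$ and $R_0$. Since $Q\le-\beta_1\varphi<0$, the flow stays strictly supersonic. Since $|Q|\le C\varphi$ is finite for each finite $\varphi$, it stays away from vacuum locally. This gives $Q\in C^1_{\rm loc}$ and closes the continuation argument.

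\textbf{Main obstacle.} I expect the rigorous version of the monotonicity in Step 1 to be the hardest part. The solution is only Lipschitz, so the maxima are taken over an $L^\infty$ family, characteristics reflect off the walls infinitely often as $\varphi\to\infty$, and the weight $c=\frac14 b^{-1}p$ grows like $(-Q)^{-1}$ by \eqref{1-eq2.93}. I would first carry out the argument for smooth approximations, obtained by mollifying the data at $\varphi=1$ and using the smooth solutions from \cite{WX15}. The first-touching-time contradiction then works pointwise, and the bounds pass to the limit because they do not depend on the approximation.
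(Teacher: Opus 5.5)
Your proposal is correct and follows the paper's proof. Both get global existence from \cite{WX15} under the sign condition $W(1,\cdot)\le 0\le Z(1,\cdot)$, prove $\beta_1\le -W,Z\le\beta_2$ by a comparison along wall-reflected characteristics, and then obtain the bounds on $Q$, $Q_\varphi$, $Q_\psi$ by integrating $Q_\varphi=\frac12(W-Z)$ from $\varphi=1$ and using $Q_\psi=-\frac12 b^{-1/2}(Q)(W+Z)$ together with \eqref{1-eq2.92}.

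Your $\varepsilon$-shifted first-touching argument is a cleaner version of the paper's contradiction scheme, and you are right to use the sign in \eqref{1-eq2.29}, since the one in \eqref{1-la2} is a misprint. One small slip in your closing remark: $\frac14 b^{-1}(Q)p(Q)$ behaves like $\frac{\gamma+1}{4}(-Q)^{-1}$ and so decays rather than grows as $\varphi\to\infty$; this is harmless because the argument only uses its positivity and local boundedness.
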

\begin{proof}
 According to Theorem 3.7 in \cite{WX15}, the problem \eqref{1-la1}--\eqref{1-la8} admits a unique weak solution
$(W,Z,Q)\in L^{\infty}((1,+\infty)\times(0,m))\times L^{\infty}((1,+\infty)\times(0,m))\times C^{1}([1,+\infty)\times[0,m])$. It remains to derive the estimates \eqref{1-lath1}--\eqref{1-lath3}. As mentioned before,
$$
\beta_1\ge\frac{1} {c_*^{1+2/(\gamma-1)}R_0}-\tilde{\beta}_1\sigma,\quad\beta_2\le\frac{1}{c_*^{1+2/(\gamma-1)}R_0}+\tilde{\beta}_2\sigma.
$$
It suffices to prove
\begin{align*}
\beta_1
\leq -W(\varphi,\psi),Z(\varphi,\psi)\leq
\beta_2.
\end{align*}
Then
\eqref{1-lath1}--\eqref{1-lath3} can be obtained. It follows from \eqref{1-eqlaw1z1} that
\begin{align}
\label{1-lam}
\beta_1
\leq -W(1,\psi),Z(1,\psi)\leq
\beta_2.
\end{align}
We first prove the upper bound
\begin{align}
\label{1-lag1}
-W(\varphi,\psi),Z(\varphi,\psi)\le \beta_2,
\quad(\varphi,\psi)\in(1,+\infty)\times(0,m).
\end{align}
If \eqref{1-lag1} is invalid, then one of the following two cases occurs:
\begin{enumerate}[ \rm (i)]
\item
There exist three constants $\hat{\varphi}_+>1$, $\check{\varphi}_+>1$ and $\overline{M}_+>\beta_2$,
as well as a positive characteristic
$$
\Sigma_+:\Psi'_+(\varphi)=b^{1/2}(Q(\varphi,\Psi_+(\varphi))),
\quad\hat{\varphi}_+<\varphi<\check{\varphi}_+
$$
such that $W(\check{\varphi}_+,\Psi_+(\check{\varphi}_+))=-\overline{M}_+$ and
$Z(\cdot,\Psi_+(\cdot))\le \overline{M}_+$ on $[\hat{\varphi}_+,\check{\varphi}_+]$.
\item
There exist three constants $\hat{\varphi}_->1$, $\check{\varphi}_->1$ and $\overline{M}_->\beta_2$,
as well as a negative characteristic
$$
\Sigma_-:\Psi'_-(\varphi)=-b^{1/2}(Q(\varphi,\Psi_-(\varphi))),
\quad\hat{\varphi}_-<\varphi<\check{\varphi}_-
$$
such that $Z(\check{\varphi}_-,\Psi_-(\check{\varphi}_-))=\overline{M}_-$ and
$W(\cdot,\Psi_-(\cdot))\ge-\overline{M}_-$ on $[\hat{\varphi}_-,\check{\varphi}_-]$.
\end{enumerate}
Without loss of generality, we only consider case (i), case (ii) can be treated in a similar way.
On $\Sigma_+$, $W$ satisfies
\begin{align}
\label{1-lag2}
\frac{d}{d\varphi}W(\varphi,\Psi_{+}(\varphi))
=R_+(\varphi)W(\varphi,\Psi_{+}(\varphi))
(W(\varphi,\Psi_{+}(\varphi))+Z(\varphi,\Psi_{+}(\varphi))),
\quad\hat{\varphi}_+<\varphi<\check{\varphi}_+,
\end{align}
where
\begin{align*}
R_+(\varphi)=\frac14b^{-1}(Q(\varphi,\Psi_{+}(\varphi)))
p(Q(\varphi,\Psi_{+}(\varphi))),\quad
\hat{\varphi}_+<\varphi<\check{\varphi}_+.
\end{align*}
Denote $\overline{H}_+(\varphi)=W(\varphi,\Psi_{+}(\varphi))+\overline{M}_+$, then $\overline{H}(\check{\varphi}_+,\Psi_+(\check{\varphi}_+))=0$ and
\begin{align*}
&\frac{d}{d\varphi}\overline{H}(\varphi,\Psi_+(\varphi))\\
=&\,R_+(\varphi)W(\varphi,\Psi_{+}(\varphi))\overline{H}_+(\varphi,\Psi_+(\varphi))+R_+(\varphi)W(\varphi,\Psi_{+}(\varphi))(Z(\varphi,\Psi_{+}(\varphi))-\overline{M}_+),
\quad\hat{\varphi}_+<\varphi<\check{\varphi}_+,
\end{align*}
from which one gets that
\begin{align*}
&\bigg(\overline{H}_+(\varphi,\Psi_+(\varphi))\exp\bigg\{\int_{\hat{\varphi}_+}^{\varphi}-R_+(s)W(s,\Psi_+(s))ds\bigg\}\bigg)'
\nonumber\\
=&R_+(\varphi)W(\varphi,\Psi_{+}(\varphi))(Z(\varphi,\Psi_{+}(\varphi))-\overline{M}_+)\exp\bigg\{\int_{\hat{\varphi}_+}^{\varphi}-R_+(s)W(s,\Psi_+(s))ds\bigg\},
\quad\hat{\varphi}_+<\varphi<\check{\varphi}_+.
\end{align*}
Since $W \equiv 0$ is a trivial solution to \eqref{1-lag2}, the uniqueness of ODE solutions implies that $W$ cannot cross zero. Thus, $W(\check{\varphi}_+,\Psi_+(\check{\varphi}_+))=-\overline{M}_+<0$ guarantees $W(\cdot,\Psi_+(\cdot))<0$ on $[\hat{\varphi}_+,\check{\varphi}_+]$.
Combining $R_+>0$ and $Z(\cdot,\Psi_+(\cdot))-\overline{M}_+\leq 0$ on $[\hat{\varphi}_+,\check{\varphi}_+]$, we obtain
\begin{align*}
&\bigg(\overline{H}_+(\varphi,\Psi_+(\varphi))\exp\bigg\{\int_{\check{\varphi}}^{\varphi}-R_+(s)W(s,\Psi_+(s))ds\bigg\}\bigg)'\ge 0,
\quad\hat{\varphi}_+<\varphi<\check{\varphi}_+.
\end{align*}
Therefore,
$$
\overline{H}_+(\hat{\varphi}_+,\Psi_+(\hat{\varphi}_+))\leq 0, \ \
W(\hat{\varphi}_+,\Psi_+(\hat{\varphi}_+))\leq -\overline{M}_+.
$$
This, together with $W(\cdot,\Psi_+(\cdot))\ge -\overline{M}_+$,  obtains
$$
W(\hat{\varphi}_+,\Psi_+(\hat{\varphi}_+))= -\overline{M}_+.
$$
Denote $\overline{H}_-(\varphi,\Psi_-(\varphi))=\overline{M}_--Z(\varphi,\Psi_-(\varphi))$, by the similar argument, one has
\begin{align*}
Z(\hat{\varphi}_-,\Psi_-(\hat{\varphi}_-))=\overline{M}_-.
\end{align*}
Now we prove the contradiction by the method of characteristics. One can define $(\varphi_j,\psi_j)$ and $(\varphi_j^*,\psi_j^*)$ on $[1,+\infty)$ similarly as in the proof of Proposition \ref{1-pro3.1}. If $k$ is even, it holds that
\begin{align*}
-\overline{M}_+&=W(\varphi_0,\psi_0)=W(\varphi_1,\psi_1)=-Z(\varphi_1,\psi_1)=W(1,\psi_{k+1}),\\
\overline{M}_-&=Z(\varphi_0,\psi_0)=Z(\varphi_1^*,\psi_1^*)=-W(\varphi_1^*,\psi_1^*)=Z(1,\psi_{k+1}^*).
\end{align*}
If $k$ is odd,
\begin{align*}
-\overline{M}_+&=W(\varphi_0,\psi_0)=W(\varphi_1,\psi_1)=-Z(\varphi_1,\psi_1)=-Z(1,\psi_{k+1}),\\
\overline{M}_-&=Z(\varphi_0,\psi_0)=Z(\varphi_1^*,\psi_1^*)=-W(\varphi_1^*,\psi_1^*)=-W(1,\psi_{k+1}^*).
\end{align*}
Which contradicts \eqref{1-lam} and $\overline{M}_{\pm}>\beta_2$.
For the lower bound, one of the following two cases occurs:
\begin{enumerate}[ \rm (i)]
\item
 There exist three constants $\hat{\varphi}_+>1$, $\check{\varphi}_+>1$ and $\underline{M}_+<\beta_1$,
as well as a positive characteristic
$$
\Sigma_+:\Psi'_+(\varphi)=b^{1/2}(Q(\varphi,\Psi_+(\varphi))),
\quad\hat{\varphi}_+<\varphi<\check{\varphi}_+
$$
such that $W(\check{\varphi}_+,\Psi_+(\check{\varphi}_+))=-\underline{M}_+$ and
$Z(\cdot,\Psi_+(\cdot))\ge \underline{M}_+$ on $[\hat{\varphi}_+,\check{\varphi}_+]$.
\item
 There exist three constants $\hat{\varphi}_->1$, $\check{\varphi}_->1$ and $\underline{M}_-<\beta_1$,
as well as a negative characteristic
$$
\Sigma_-:\Psi'_-(\varphi)=-b^{1/2}(Q(\varphi,\Psi_-(\varphi))),
\quad\hat{\varphi}_-<\varphi<\check{\varphi}_-
$$
such that $Z(\check{\varphi}_-,\Psi_-(\check{\varphi}_-))=\underline{M}_-$ and
$W(\cdot,\Psi_-(\cdot))\le-\underline{M}_-$ on $[\hat{\varphi}_-,\check{\varphi}_-]$.
\end{enumerate}
\par
Denote $\underline{H}_+(\varphi,\Psi_+(\varphi))=W(\varphi,\Psi_+(\varphi))+\underline{M}_+$
and $\underline{H}_-(\varphi,\Psi_-(\varphi))=Z(\varphi,\Psi_-(\varphi))-\underline{M}_-$.  By  the same way, one can get that
\begin{align*}
W(\hat{\varphi}_+,\Psi_+(\hat{\varphi}_+))=-\underline{M}_+,\quad Z(\hat{\varphi}_-,\Psi_-(\hat{\varphi}_-))=\underline{M}_-.
\end{align*}
Hence \eqref{1-lath1} holds.
Then it follows from \eqref{1-la7}, \eqref{1-la8}, \eqref{1-eqlaQ1} and \eqref{1-lath1} that
\begin{align}\label{1-qqla1}
 -\beta_2\varphi \leq Q(\varphi,\psi)\leq -\beta_1\varphi,\ \
-\beta_2\leq Q_{\varphi}(\varphi,\psi)\leq -\beta_1, \quad\psi\in(0,m).
\end{align}
Finally, by \eqref{1-jie1}, \eqref{1-eq2.92}, \eqref{1-lath1} and \eqref{1-qqla1}, it holds that
\begin{align*}
\left|Q_{\psi}(\varphi,\psi)\right|
\leq\frac{1}{2}b^{-1/2}(Q(\varphi,\psi))\left|W(\varphi,\psi)+Z(\varphi,\psi)\right|
\leq\beta_7\sigma\varphi^{(\gamma+1)/2},\quad(\varphi,\psi)\in(1,+\infty)\times(0,m),
\end{align*}
where
\begin{align}\label{1-beta7}
\beta_7=\mu_{19}\hat{\beta}_2^{(\gamma+1)/2}\left(\tilde{\beta}_1+\tilde{\beta}_2\right),
\end{align}
  with  $\tilde{\beta}_i$ $(i=1,2)$ and $\hat{\beta}_2$  given by
 \eqref{1-eq3.34} and \eqref{1-jie1}, respectively, and $\mu_{19}$ is a positive constant depending only on $\check{\mu}$. Therefore, \eqref{1-lath2} and \eqref{1-lath3} can be obtained.
\end{proof}
\subsection{The $C^{1,1}$ estimate away from the sonic state}\label{3-2-infty}\noindent
\par
In this subsection, we establish the existence of $C^{1,1}$ sonic-supersonic flows in $[1,+\infty)$.
To this end, we consider   the following linearized problem:
\begin{align}
&W_{\varphi}+b^{1/2}(\tilde{Q})W_{\psi}=-\frac{1}{2}b^{-1/2}(\tilde{Q})p(\tilde{Q})\tilde{Q}_{\psi}W,&&(\varphi,\psi)\in(1,+\infty)\times(0,m),\label{1-eqla3.7}\\
&Z_{\varphi}-b^{1/2}(\tilde{Q})Z_{\psi}=\frac{1}{2}b^{-1/2}(\tilde{Q})p(\tilde{Q})\tilde{Q}_{\psi}Z,&&(\varphi,\psi)\in(1,+\infty)\times(0,m),\label{1-eqla3.8}\\
&W(1,\psi)=W(1,\psi),&&\psi\in(0,m),\label{1-eqla3.9}\\
&Z(1,\psi)=Z(1,\psi),&&\psi\in(0,m),\label{1-eqla3.10}\\
&W(\varphi,0)+Z(\varphi,0)=0,&&\varphi\in(1,+\infty),\label{1-eqla3.11}\\
&W(\varphi,m)+Z(\varphi,m)=0,&&\varphi\in(1,+\infty),\label{1-eqla3.12}\\
&Q_{\varphi}(\varphi,\psi)=\frac{1}{2}\left(W(\varphi,\psi)-Z(\varphi,\psi)\right),&& (\varphi,\psi)\in(1,+\infty)\times(0,m),\label{1-eqla3.13}\\
&Q(1,\psi)=Q(1,\psi),&&\psi\in(0,m).\label{1-eqla3.14}
\end{align}
Given $\tilde{Q}\in C^{1,1}([1,+\infty)\times[0,m])$ satisfying
\begin{align}
&-\bigg(\tilde{\beta}_2\sigma+\frac{1}{c_*^{1+2/(\gamma-1)}R_0}\bigg)\varphi \leq-\beta_2\varphi
\leq\tilde{Q}(\varphi,\psi)\leq -\beta_1\varphi\leq\bigg(\tilde{\beta}_1\sigma-\frac{1}{c_*^{1+2/(\gamma-1)}R_0}\bigg)\varphi,\label{1-eq3.16*l}\\
&-\tilde{\beta}_2\sigma+\frac{1}{c_*^{1+2/(\gamma-1)}R_0} \leq -\beta_2
\leq\tilde{Q}_{\varphi}(\varphi,\psi)\leq
-\beta_1\leq \tilde{\beta}_1\sigma-\frac{1}{c_*^{1+2/(\gamma-1)}R_0},\ \
\left|\tilde{Q}_{\psi}(\varphi,\psi)\right|\leq\beta_7\sigma\varphi^{(\gamma+1)/2},\label{1-eq3.17*l}\\
  &|\tilde{Q}_{\varphi\varphi}(\varphi,\psi)|\leq \beta_8\sigma\varphi^{-1},\ \
  |\tilde{Q}_{\varphi\psi}(\varphi,\psi)|\leq \beta_9\sigma\varphi^{(\gamma-1)/2},\ \
  |\tilde{Q}_{\psi\psi}(\varphi,\psi)|\leq \beta_{10}\sigma\varphi^{\gamma},\label{1-eq3.61l}
\end{align}
where $\tilde{\beta}_i$ $(i=1,2,3)$ and $\beta_7$ are defined in \eqref{1-eq3.34} and \eqref{1-beta7}, respectively. Then we have the following proposition.
\begin{proposition}\label{1-prola3.2}
Given $(\tilde{Q},W(1,\psi),Z(1,\psi)) \in  C_{\rm{loc}}^{1,1}([1,+\infty)\times[0,m])\times C^{0,1}([0,m])\times C^{0,1}([0,m])$ satisfying  \eqref{1-eqlaw1z1} and \eqref{1-eq3.16*l}--\eqref{1-eq3.61l}, the problem \eqref{1-eqla3.7}--\eqref{1-eqla3.14} admits a unique weak solution $(W,Z,Q)\in L^{\infty}((1,+\infty)\times(0,m))\times L^{\infty}((1,+\infty)\times(0,m)) \times C_{\rm{loc}}^{1,1}([1,+\infty)\times[0,m])$ that satisfies
\begin{align}
&\left|b^{1/2}(\tilde{Q})(\varphi,\psi)W_{\varphi}(\varphi,\psi)\right|\leq
M_{14}\sigma\varphi^{-(\gamma+3)/2},\quad (\varphi,\psi)\in(1,+\infty)\times(0,m),\label{1-eqla3.62}\\
&\left|b^{1/2}(\tilde{Q})(\varphi,\psi)Z_{\varphi}(\varphi,\psi)\right|\leq
M_{14}\sigma\varphi^{-(\gamma+3)/2},\quad (\varphi,\psi)\in(1,+\infty)\times(0,m),\label{1-eqla3.621}\\
&\left|Q_{\varphi\psi}(\varphi,\psi)\right|\leq
M_{15}\sigma\varphi^{(\gamma-1)/2},\ \ \ (\varphi,\psi)\in(1,+\infty)\times(0,m),\label{1-eqla3.66}\\
&\left|Q_{\varphi\varphi}(\varphi,\psi)\right|\leq
M_{16}\sigma\varphi^{-1},\quad (\varphi,\psi)\in(1,+\infty)\times(0,m),\label{1-eqla3.64}\\
&\left|Q_{\psi\psi}(\varphi,\psi)\right|\leq
M_{17}\sigma\varphi^{\gamma},\quad (\varphi,\psi)\in(1,+\infty)\times(0,m),\label{1-eqla3.65}
\end{align}
 where $M_{14}$ and $M_i$ $(i=15,16,17)$ are positive constants depending only on $\sigma$,  $\gamma$, $R_0$ and  $\gamma$, $R_0$, respectively.
\end{proposition}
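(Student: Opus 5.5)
We would follow the same structure as Proposition \ref{1-pro3.2}, now in the strictly hyperbolic region $\varphi\ge1$, with weights adapted to the decay \eqref{1-eq2.92}--\eqref{1-eq2.93}. Existence and uniqueness of the weak solution $(W,Z,Q)$ of \eqref{1-eqla3.7}--\eqref{1-eqla3.14} come from the linear theory for strictly hyperbolic systems on each strip $[1,\Phi]\times[0,m]$, as in \cite{WX15}. The zeroth-order bounds follow verbatim from the characteristic argument of Theorem \ref{1-thl1}. In that argument the coefficient $\frac12 b^{-1/2}p\tilde Q_\psi$ now satisfies, by \eqref{1-eq2.92}--\eqref{1-eq2.93} and \eqref{1-eq3.16*l}--\eqref{1-eq3.17*l},
\[
\Big|\tfrac12 b^{-1/2}(\tilde Q)p(\tilde Q)\tilde Q_\psi\Big|\le C\beta_7\sigma\,\varphi^{-(\gamma+3)/2}\varphi^{(\gamma+1)/2}=C\sigma\varphi^{-1}.
\]
We note that this is not integrable at infinity. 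Hence we will not rely on exponential factors. Instead we keep the sign/contradiction argument of Theorem \ref{1-thl1} to get $|(W,Z)|\le\beta_2$, and use $|W+Z|\le C\sigma$ from \eqref{1-lath1}.

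For the derivatives we set $(U_l,V_l)=(b^{1/2}(\tilde Q)W_\varphi,\,b^{1/2}(\tilde Q)Z_\varphi)$, the weight now being $b^{1/2}$ rather than $b^{-1/2}$. Differentiating \eqref{1-eqla3.7}--\eqref{1-eqla3.8} in $\varphi$ gives transport equations
\[
\partial_\varphi U_l+b^{1/2}\partial_\psi U_l=-R_lU_l-W\,b^{1/2}\partial_\varphi F_l+(\text{terms from }\partial_\varphi b^{1/2}),
\]
and symmetrically for $V_l$. Here $R_l,F_l$ are built from $b^{-1}p\tilde Q_\varphi$, $b^{-1/2}p\tilde Q_\psi$ and their derivatives. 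The term $\partial_\varphi b^{1/2}(\tilde Q)=\frac12 b^{-1/2}b'\tilde Q_\varphi$ contributes a damping of size $\frac{\gamma+1}{2}\varphi^{-1}$ with a favorable sign, since $b\sim\varphi^{-(\gamma+1)}$ decreases. The boundary relations $U_l+V_l=0$ on $\psi=0,m$ follow from \eqref{1-eqla3.11}--\eqref{1-eqla3.12}. At $\varphi=1$ the data are bounded by $C\sigma$ via Theorem \ref{1-th3.2}.

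Along the reflected characteristic polygon $(\varphi_j,\psi_j)$, used as in Proposition \ref{1-pro3.2}, we then multiply by the integrating factor and obtain an estimate of the form
\[
|U_l(\varphi_0,\psi_0)|\le \Big(\tfrac{\varphi_0}{1}\Big)^{-\kappa}\Big(C\sigma+\int_1^{\varphi_0}s^{\kappa}|{\rm source}|\,ds\Big).
\]
The aim is to show $\kappa=(\gamma+3)/2$ and a source of order $\sigma s^{-(\gamma+5)/2}$, using \eqref{1-eq3.61l} to bound $\partial_\psi F_l$ through $\tilde Q_{\psi\psi},\tilde Q_{\varphi\psi}$. This yields \eqref{1-eqla3.62}--\eqref{1-eqla3.621}. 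As in \eqref{1-eq3.79}, the boundary terms $WF$ telescope because $F$ vanishes on the walls, where $\tilde Q_\psi=0$.

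The remaining estimates follow algebraically. $Q_{\varphi\varphi}=\frac12(W_\varphi-Z_\varphi)$ gives $|Q_{\varphi\varphi}|\le b^{-1/2}M_{14}\sigma\varphi^{-(\gamma+3)/2}\le C\sigma\varphi^{-1}$. Differentiating \eqref{1-eq3.15} gives $Q_{\varphi\psi}=-\frac12 b^{-1/2}(W_\varphi+Z_\varphi)+(\text{lower order})$, hence a bound $C\sigma\varphi^{(\gamma+1)/2-(\gamma+3)/2+(\gamma+1)/2}=C\sigma\varphi^{(\gamma-1)/2}$. $Q_{\psi\psi}$ is obtained from $Q_\psi=-\frac12 b^{-1/2}(W+Z)$ and the characteristic equations, which express $W_\psi,Z_\psi$ through $W_\varphi,Z_\varphi$ divided by $b^{1/2}$; this gives $\varphi^{\gamma}$. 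The main obstacle is getting the exact decay exponent in the characteristic estimate. The damping from $\partial_\varphi b^{1/2}$ must dominate the $O(\sigma\varphi^{-1})$ coefficients $R_l$ uniformly, without losing a power of $\varphi$ through a logarithm, and for this we use smallness of $\sigma$. We will first try a Gronwall argument in which $\sigma$-small corrections perturb the exponent to $\kappa-C\sigma$, then close by choosing the constant $M_{14}$ larger. If that leaves a loss, we will use the exact balance $sb^{-1}p\to-(\gamma+1)$ from \eqref{1-eq2.93} to cancel the leading terms identically.
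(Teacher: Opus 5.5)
You use the same unknowns as the paper, $U_l=b^{1/2}(\tilde Q)W_\varphi$ and $V_l=b^{1/2}(\tilde Q)Z_\varphi$. You also use the same reflected-characteristic argument and the same algebraic recovery of the second derivatives. However, the decisive quantitative step, the damping rate, is misidentified, and with the rate you identified the estimate cannot close.

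Differentiate $W_\varphi+b^{1/2}W_\psi=-GW$, where $G=\frac12 b^{-1/2}p\tilde Q_\psi$, in $\varphi$ (note $p=b'$), and eliminate $W_\psi=-b^{-1/2}(W_\varphi+GW)$. This gives
\[
\partial_\varphi U_l+b^{1/2}(\tilde Q)\partial_\psi U_l=b^{-1}(\tilde Q)p(\tilde Q)\tilde Q_\varphi\,U_l+F_lW,\qquad F_l=\tfrac12 b^{-1}p^2\tilde Q_\varphi\tilde Q_\psi-\tfrac12\,(p\tilde Q_\psi)_\varphi .
\]
The factor $\frac12 b^{-1}p\tilde Q_\varphi\approx-\frac{\gamma+1}{2}\varphi^{-1}$ enters twice. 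It comes once from differentiating the weight and once from the term $-(b^{1/2})_\varphi W_\psi$ produced by differentiating the transport speed. Meanwhile the two $O(\sigma\varphi^{-1})$ terms $\mp GU_l$ cancel exactly. That cancellation is the reason for the weight $b^{1/2}$ here; in Proposition \ref{1-pro3.2} the weight $b^{-1/2}$ was chosen to cancel the $\tilde Q_\varphi$-terms instead. So there is no $O(\sigma\varphi^{-1})$ coefficient to dominate, and the true damping is $R_l\approx-(\gamma+1)\varphi^{-1}$.

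With the damping $\frac{\gamma+1}{2}\varphi^{-1}$ that you identified, the $O(\sigma)$ data at $\varphi=1$ alone give only $|U_l|\lesssim\sigma\varphi^{-(\gamma+1)/2}$. That is a full power of $\varphi$ short of \eqref{1-eqla3.62}. This is an $O(1)$ deficit in the exponent, so neither a $\sigma$-small Gronwall correction nor a larger $M_{14}$ can repair it. It propagates to $|Q_{\varphi\varphi}|\lesssim\sigma$ and $|Q_{\varphi\psi}|\lesssim\sigma\varphi^{(\gamma+1)/2}$, so the class \eqref{1-eq3.61l} would not be reproduced.

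Your source bookkeeping is also carried over from Proposition \ref{1-pro3.2} unnecessarily. Here the source is just $F_lW$, and $F_l$ involves $\tilde Q_{\varphi\psi}$ but not $\tilde Q_{\psi\psi}$. So $|F_lW|\le M_{13}\beta_2\sigma\varphi^{-(\gamma+5)/2}$ directly, with no integration by parts along characteristics and no telescoping $WF$ terms.

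Your stated target $\kappa=(\gamma+3)/2$ would not suffice either, because $\varphi_0^{-\kappa}\int_1^{\varphi_0}s^{\kappa-(\gamma+5)/2}\,ds$ then produces $\log\varphi_0$. The exponent $(\gamma+3)/2$ is dictated by the source, and the damping must be strictly larger. The paper obtains this from \eqref{1-eq2.93}: $R_l\le-M_{12}\varphi^{-1}$ with $M_{12}=\frac{(7\gamma+9)\beta_1}{8\beta_2}$. Smallness of $\sigma$ is used to enforce $\beta_2/\beta_1\le\frac{7\gamma+9}{6\gamma+10}$ (see \eqref{1-eqe3} and \eqref{1-sigma3}). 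This gives $M_{12}\ge\frac{3\gamma+5}{4}$, hence $2M_{12}-\gamma-3\ge\frac{\gamma-1}{2}>0$, and \eqref{1-dau4} follows with no logarithm.

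A minor point: the contradiction argument of Theorem \ref{1-thl1} relies on the zeroth-order coefficient being $\frac14 b^{-1}p(W+Z)$. It therefore does not carry over verbatim to the linearized equation with an arbitrary $\tilde Q_\psi$, where it only yields $|W|\le\beta_2\varphi^{C\sigma}$. The bound $|W|,|Z|\le\beta_2$ that enters the source estimate, which the paper also uses, is really a property of the nonlinear solution, $\tilde Q=Q$.
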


\begin{proof}
 Set
 $$
 (U_l,{V_l})(\varphi,\psi)=\left(b^{1/2}(\tilde{Q})W_\varphi,b^{1/2}(\tilde{Q})Z_\varphi\right)(\varphi,\psi),
 \quad (\varphi,\psi)\in (1,+\infty)\times(0,m).
 $$
 Then $(U_l,{V_l})$ solves the following problem:
\begin{align}
 &\partial_\varphi{U_l}+b^{1/2}(\tilde{Q})\partial_\psi{U_l}=R_lU_l+F_lW,\quad&&(\varphi,\psi)\in (1,+\infty)\times(0,m),\label{1-daet1}\\
 &\partial_{\varphi}{V_l}-b^{1/2}(\tilde{Q})\partial_{\psi}{V_l}=R_lV_l-F_lZ,\quad&&(\varphi,\psi)\in (1,+\infty)\times(0,m),\label{1-daet2}\\
 &{U_l}(1,\psi)=U_{s}(1,\psi),\quad&&\psi\in (0,m),\label{1-daet3}\\
 &{V_l}(1,\psi)=V_{s}(1,\psi),\quad&&\psi\in (0,m),\label{1-daet4}\\
 &U_l(\varphi,0)+{V_l}(\varphi,0)=0,\quad&&\varphi\in (1,+\infty),\label{1-daet5}\\
  &U_l(\varphi,m)+{V_l}(\varphi,m)=0,\quad&&\varphi\in (1,+\infty),\label{1-daet6}
\end{align}
where
\begin{align*}
  &R_{l}=b^{-1}(\tilde{Q}(\varphi,\psi))p(\tilde{Q}(\varphi,\psi))\tilde{Q}_\varphi(\varphi,\psi), \\
 &F_{l}=\frac12b^{-1}(\tilde{Q}(\varphi,\psi))p^2(\tilde{Q}(\varphi,\psi))\tilde{Q}_\varphi(\varphi,\psi)\tilde{Q}_\psi(\varphi,\psi)-
 \frac12\left(p(\tilde{Q}(\varphi,\psi))\tilde{Q}_\psi(\varphi,\psi)\right)_\varphi, \\
&U_{s}(1,\psi)=-\frac12p(\tilde{Q}(1,\psi))\tilde{Q}_\psi(1,\psi)W(1,\psi)-b(\tilde{Q}(1,\psi))W_\psi(1,\psi),\\
&V_{s}(1,\psi)=\frac12p(\tilde{Q}(1,\psi))\tilde{Q}_\psi(1,\psi)Z(1,\psi)+b(\tilde{Q}(1,\psi))Z_\psi(1,\psi).
\end{align*}
It follows from \eqref{1-eq3.62}, \eqref{1-eq3.621}, \eqref{1-eq2.92}, \eqref{1-eq2.93} and \eqref{1-eq3.16*l}--\eqref{1-eq3.61l} that
\begin{align}
R_l(\varphi,\psi)\leq&-M_{12}\varphi^{-1},
\quad|F_l(\varphi,\psi)|\leq M_{13}\sigma\varphi^{-(\gamma+5)/2},
\quad|U_l(1,\psi),V_l(1,\psi)|\leq\left(M_2\beta_6+M_3\right)\sigma,\label{1-larfu}
\end{align}
where
\begin{align}\label{1-M12}
M_{12}=\frac{(7\gamma+9)\beta_1}{8\beta_2},\quad M_{13}=\frac{\mu_{20}}{\beta_1^{\gamma+3}} \left( \beta_2\beta_7 + \beta_1\beta_9 \right).
\end{align}
Here $\mu_{20}$ is a positive constant depending only on $\check{\mu}$ and $\gamma$, and $\beta_i$ $(i = 1, 2)$, $\beta_6$,  $\beta_7$, $M_j$ $(j= 2, 3)$ are given by \eqref{1-beta*}, \eqref{1-eq3.87},  \eqref{1-beta7} and \eqref{1-M2}, respectively.
\par We first estimate $U_l$ and ${V_l}$. Assume that
\begin{align*}
\Sigma_\pm&: \Psi_\pm^{\prime}=\pm b^{1/2}(\tilde{Q}(\varphi,\Psi_\pm(\varphi))),  \\
0&<\Psi_\pm<m,\quad\hat{\varphi}_\pm<\varphi<\check{\varphi}_\pm\quad(1\leq\hat{\varphi}_\pm<\check{\varphi}_\pm< +\infty)
\end{align*}
are positive and negative characteristics. On $\Sigma_+$, if $\hat{\varphi}_+\leq\varphi\leq\check{\varphi}_+$, $U_l$ satisfies
\begin{align*}
\frac{d}{d\varphi}U_l(\varphi,\Psi_+(\varphi))=\,
&R_l(\varphi,\Psi_+(\varphi))U_l(\varphi,\Psi_+(\varphi))
+F_l(\varphi,\Psi_+(\varphi))W(\varphi,\Psi_+(\varphi)),
\end{align*}
which can be represented as
\begin{align*}
&\bigg(U_l(\varphi,\Psi_+(\varphi))\exp\bigg\{\int_{\hat{\varphi}_+}^{\varphi}-R_l(s,\Psi_+(s))ds\bigg\}\bigg)'
\\
=\,&F_l(\varphi,\Psi_+(\varphi))W(\varphi,\Psi_+(\varphi))
\exp\bigg\{\int_{\hat{\varphi}_+}^{\varphi}-R_l(s,\Psi_+(s))ds\bigg\}.
\end{align*}
Then for any $\hat{\varphi}_+\leq\varphi\leq\check{\varphi}_+$,
\begin{align*}
&U_l(\check{\varphi}_+,\Psi_+(\check{\varphi}_+))\exp\bigg\{\int_{\hat{\varphi}_+}^{\check{\varphi}_+}-R_l(s,\Psi_+(s))ds\bigg\}
-U_l(\hat{\varphi}_+,\Psi_+(\hat{\varphi}_+))
\nonumber\\
=\,&\int_{\hat{\varphi}_+}^{\check{\varphi}_+}
F_l(\varphi,\Psi_+(\varphi))W(\varphi,\Psi_+(\varphi))
\exp\bigg\{\int_{\hat{\varphi}_+}^{\varphi}-R_l(s,\Psi_+(s))ds\bigg\}
d\varphi\nonumber\\
\leq\,&M_{13}\beta_2\sigma\int_{\hat{\varphi}_+}^{\check{\varphi}_+}\varphi^{-(\gamma+5)/2}
\exp\bigg\{\int_{\hat{\varphi}_+}^{\varphi}-R_l(\varphi,\Psi_+(\varphi))ds\bigg\}
d\varphi,
\end{align*}
from which one has
\begin{align}\label{1-dau}
&U_l(\check{\varphi}_+,\Psi_+(\check{\varphi}_+))-M_{13}\beta_2\sigma\int_{1}^{\check{\varphi}_+}\varphi^{-(\gamma+5)/2}
\exp\bigg\{\int_{\varphi}^{\check{\varphi}_+}R_l(s,\Psi_+(s))ds\bigg\}d\varphi\nonumber\\
\leq\,&\bigg(
U_l(\hat{\varphi}_+,\Psi_+(\hat{\varphi}_+))-M_{13}\beta_2\sigma\int_{1}^{\hat{\varphi}_+}\varphi^{-(\gamma+5)/2}
\exp\bigg\{ \int_{\varphi}^{\hat{\varphi}_+}R_l(s,\Psi_+(s))ds \bigg\}
d\varphi
\bigg)\nonumber\\
&\quad\times
\exp\bigg\{\int_{\hat{\varphi}_+}^{\check{\varphi}_+}R_l(s,\Psi_+(s))ds\bigg\}.
\end{align}
Similarly, on $\Sigma_-$, if $\hat{\varphi}_-\leq\varphi\leq\check{\varphi}_-$, $V$ satisfies
\begin{align*}
\frac{d}{d\varphi}V_l(\varphi,\Psi_-(\varphi))=R_l(\varphi,\Psi_-(\varphi))V_l(\varphi,\Psi_-(\varphi))-F_l(\varphi,\Psi_-(\varphi))Z(\varphi,\Psi_-(\varphi)),
\end{align*}
which can be rewritten as
\begin{align*}
&\bigg(V_l(\varphi,\Psi_-(\varphi))\exp\bigg\{\int_{\hat{\varphi}_-}^{\varphi}-R_l(s,\Psi_-(s))ds\bigg\}\bigg)'\\
=\,&-F_l(\varphi,\Psi_-(\varphi))Z(\varphi,\Psi_-(\varphi))
\exp\bigg\{\int_{\hat{\varphi}_-}^{\varphi}-R_l(s,\Psi_-(s))ds\bigg\}.
\end{align*}
Then for any $\hat{\varphi}_-\leq\varphi\leq\check{\varphi}_-$,
\begin{align*}
&V_l(\check{\varphi}_-,\Psi_-(\check{\varphi}_-))\exp\bigg\{\int_{\hat{\varphi}_-}^{\check{\varphi}_-}
-R_l(s,\Psi_-(s))ds\bigg\}
-V_l(\hat{\varphi}_-,\Psi_-(\hat{\varphi}_-))
\nonumber\\
=\,&\int_{\hat{\varphi}_-}^{\check{\varphi}_-}
-F_l(\varphi,\Psi_-(\varphi))Z(\varphi,\Psi_-(\varphi))
\exp\bigg\{\int_{\hat{\varphi}_-}^{\varphi}-R_l(s,\Psi_-(s))ds\bigg\}
d\varphi\nonumber\\
\geq\,&-M_{13}\beta_2\sigma\int_{\hat{\varphi}_-}^{\check{\varphi}_-}\varphi^{-(\gamma+5)/2}
\exp\bigg\{\int_{\hat{\varphi}_-}^{\varphi}-R_l(s,\Psi_-(s))ds\bigg\}
d\varphi,
\end{align*}
from which one has
\begin{align}\label{1-dav}
&-V_l(\check{\varphi}_-,\Psi_-(\check{\varphi}_-))-M_{13}\beta_2\sigma\int_{1}^{\check{\varphi}_-}\varphi^{-(\gamma+5)/2}
\exp\bigg\{\int_{\varphi}^{\check{\varphi}_-}R_l(s,\Psi_-(s))ds\bigg\}d\varphi\nonumber\\
\leq\,&\bigg(
-V_l(\hat{\varphi}_-,\Psi_-(\hat{\varphi}_-))-M_{13}\beta_2\sigma\int_{1}^{\hat{\varphi}_-}\varphi^{-(\gamma+5)/2}
\exp\bigg\{ \int_{\varphi}^{\hat{\varphi}_-}R_l(s,\Psi_-(s))ds \bigg\}
d\varphi
\bigg)\nonumber\\
&\quad\times \exp\bigg\{\int_{\hat{\varphi}_-}^{\check{\varphi}_-}R_l(s,\Psi_-(s))ds\bigg\}.
\end{align}
Define a function $r_3(s)$ $(0\leq s<\varphi)$ as follows
\begin{equation*}
 \begin{aligned}
r_3(s)=R_l(s,\Psi_j(s)),
\quad
\varphi_{j}\le s<\varphi_{j-1},\, 1\le j\le k+1.
\end{aligned}
\end{equation*}
One can define $(\varphi_j,\Psi_j)$ on $[1,+\infty)$ similarly as in the proof of Proposition \ref{1-pro3.1}. For $1\le j\le k+1$, it follows from \eqref{1-dau} and \eqref{1-dav} that
\begin{align}\label{1-dau1}
&U_l(\varphi_{j-1},\psi_{j-1})-M_{13}\beta_2\sigma\int_{1}^{\varphi_{j-1}}\varphi^{-(\gamma+5)/2}
\exp\bigg\{\int_{\varphi}^{\varphi_{j-1}}r_3(s)ds\bigg\}d\varphi\nonumber\\
\leq\,&
\bigg(U_l(\varphi_j,\psi_j)-M_{13}\beta_2\sigma\int_{1}^{\varphi_j}\varphi^{-(\gamma+5)/2}
\exp\bigg\{ \int_{\varphi}^{\varphi_j}r_3(s)ds \bigg\}d\varphi\bigg)\nonumber\\
&\quad\times \exp\bigg\{\int_{\varphi_j}^{\varphi_{j-1}}r_3(s)ds\bigg\},\mbox{ if $j$ is odd},
\end{align}
\begin{align}\label{1-dav1}
&-V_l(\varphi_{j-1},\psi_{j-1})-M_{13}\beta_2\sigma\int_{1}^{\varphi_{j-1}}\varphi^{-(\gamma+5)/2}
\exp\bigg\{\int_{\varphi}^{\varphi_{j-1}}r_3(s)ds\bigg\}d\varphi\nonumber\\
\leq\,&
\bigg(-V_l(\varphi_j,\psi_j)-M_{13}\beta_2\sigma\int_{1}^{\varphi_j}\varphi^{-(\gamma+5)/2}
\exp\bigg\{ \int_{\varphi}^{\varphi_j}r_3(s)ds \bigg\}d\varphi\bigg)\nonumber\\
&\quad\times\exp\bigg\{\int_{\varphi_j}^{\varphi_{j-1}}r_3(s)ds\bigg\},\mbox{ if $j$ is even}.
\end{align}
If $k$ is even, one can get from \eqref{1-daet3}--\eqref{1-daet6}, \eqref{1-dau1} and \eqref{1-dav1} that
\begin{align}\label{1-dau2}
&U_l(\varphi_0,\psi_0)-M_{13}\beta_2\sigma\int_{1}^{\varphi_0}\varphi^{-(\gamma+5)/2}
\exp\bigg\{\int_{\varphi}^{\varphi_0}r_3(s)ds\bigg\}d\varphi\nonumber\\
\leq\,&\bigg(U_l(\varphi_1,\psi_1)-M_{13}\beta_2\sigma\int_{1}^{\varphi_1}\varphi^{-(\gamma+5)/2}
\exp\bigg\{\int_{\varphi}^{\varphi_1}r_3(s)ds\bigg\}d\varphi\bigg)
\exp\bigg\{\int_{\varphi_1}^{\varphi_0}r_3(s)ds\bigg\}\nonumber\\
\leq\,&{U_l}(1,\psi_{k+1})
\exp\bigg\{\int_{1}^{\varphi_0}r_3(s)ds\bigg\}.
\end{align}
Similarly, if $k$ is odd, one can obtain that
\begin{align}\label{1-dau3}
&U_l(\varphi_0,\psi_0)-M_{13}\beta_2\sigma\int_{1}^{\varphi_0}\varphi^{-(\gamma+5)/2}
\exp\bigg\{\int_{\varphi}^{\varphi_0}r_3(s)ds\bigg\}d\varphi\nonumber\\
\leq\,&\bigg(U_l(\varphi_1,\psi_1)-M_{13}\beta_2\sigma\int_{1}^{\varphi_1}\varphi^{-(\gamma+5)/2}
\exp\bigg\{\int_{\varphi}^{\varphi_1}r_3(s)ds\bigg\}d\varphi\bigg)
\exp\bigg\{\int_{\varphi_1}^{\varphi_0}r_3(s)ds\bigg\}\nonumber\\
=\,&\bigg(-V_l(\varphi_1,\psi_1)-M_{13}\beta_2\sigma\int_{1}^{\varphi_1}\varphi^{-(\gamma+5)/2}
\exp\bigg\{\int_{\varphi}^{\varphi_1}r_3(s)ds\bigg\}d\varphi\bigg)
\exp\bigg\{\int_{\varphi_1}^{\varphi_0}r_3(s)ds\bigg\}\nonumber\\
\leq\,&-{V_l}(1,\psi_{k+1})
\exp\bigg\{\int_{1}^{\varphi_0}r_3(s)ds\bigg\}.
\end{align}
Then it follows from \eqref{1-larfu} and \eqref{1-dau2}--\eqref{1-dau3} that in both cases,
\begin{align}\label{1-dau4}
|U_l(\varphi_0,\psi_0)|\leq\,&\big\||\left({U_l}(1,\psi_{k+1}),{V_l}(1,\psi_{k+1})\right)|\big\|_{L^{\infty}(0,m)}
\exp\bigg\{\int_{1}^{\varphi_0}r_3(s)ds\bigg\}\nonumber\\
&\quad+M_{13}\beta_2\sigma\int_{1}^{\varphi_0}\varphi^{-(\gamma+5)/2}
\exp\bigg\{\int_{\varphi}^{\varphi_0}r_3(s)ds\bigg\}d\varphi\nonumber\\
\leq\,&\left(M_2\beta_6+M_3\right)\sigma\varphi_0^{-M_{12}}
+\frac{2M_{13}\beta_2\sigma}{2M_{12}-\gamma-3}\left(\varphi_0^{-(\gamma+3)/2}
-\varphi_0^{-M_{12}}\right).
\end{align}
If ${\beta} _i$ $(i= 1,2)$  given by \eqref{1-beta*} satisfy
\begin{align}\label{1-eqe3}
\frac{\beta_2}{\beta_1}\leq \frac{7\gamma+9}{6\gamma+10},
\end{align}
there holds
\begin{align}\label{1-m7}
M_{12}=\frac{(7\gamma+9)\beta_1}{8\beta_2}\geq \frac{3\gamma+5}{4}.
\end{align}
Then one gets from \eqref{1-dau4}--\eqref{1-m7} that
\begin{align*}
|U_l(\varphi_0,\psi_0)|\leq \bigg(\frac{4M_{13}\beta_2}{\gamma-1}+M_2\beta_6+M_3\bigg) \sigma\varphi_0^{-(\gamma+3)/2}:= M_{14}\sigma\varphi_0^{-(\gamma+3)/2}.
\end{align*}
 Therefore, the estimates \eqref{1-eqla3.62} and \eqref{1-eqla3.621} hold.
 \par Next, we set
 \begin{equation}\label{1-M14}
 \begin{aligned}
  \hat{M}_{14}=\frac{4\hat{M}_{13}\hat{\beta}_2}{\gamma-1} + \hat{M}_2\beta_6 + \hat{M}_3,
 \end{aligned}
 \end{equation}
where $\hat{M}_i$ $(i=2,3)$ and $\hat{M}_{13}$ are positive constants. They are defined in the same way as $M_i$ $(i=2,3)$ and $M_{13}$ in \eqref{1-M2} and \eqref{1-M12} by replacing $\beta_j$ $(j=1,2)$ and $\sigma$ with $\hat{\beta}_j$ $(j=i,2)$ and $\sigma_1$. Here $\hat{\beta}_j$ $(j=1,2)$ and $\sigma_1$ are given by \eqref{1-jie1} and \eqref{1-eq3.35}, respectively.
Combining the estimates with \eqref{1-jie1}, \eqref{1-eq2.92}, \eqref{1-eq2.93}, \eqref{1-qqla1}, \eqref{1-eq3.16*l}, \eqref{1-eq3.17*l} and \eqref{1-dau4} shows that
\begin{equation*}
\begin{aligned}
&|Q_{\varphi\psi}(\varphi,\psi)|
=\left|-\frac{1}{2}b^{-1}(\tilde{Q})p(\tilde{Q})
\tilde{Q}_{\psi}Q_{\varphi}-\frac{1}{2}b^{-1}(\tilde{Q})(U_l+V_l)\right|\\
&\leq  |\frac{1}{2}b^{-1}(\tilde{Q})p(\tilde{Q})\tilde{Q}_{\psi}Q_{\varphi}|+\frac{1}{2}b^{-1}(\tilde{Q})|U_l+V_l|\leq M_{15}\sigma\varphi^{(\gamma-1)/2},\\
&|Q_{\varphi\varphi}(\varphi,\psi)|
=\frac{1}{2}\left|b^{-1/2}(\tilde{Q})(U_l-V_l)\right|
\leq M_{16}\sigma\varphi^{-1}\\
&|Q_{\psi\psi}(\varphi,\psi)|
=\left|-b^{-1}(\tilde{Q})p(\tilde{Q})\tilde{Q}_{\psi}Q_{\psi}+\frac{1}{2}b^{-3/2}(\tilde{Q})(U_l-V_l)\right|\\
&\leq \left|b^{-1}(\tilde{Q})p(\tilde{Q})\tilde{Q}_{\psi}Q_{\psi}\right|+\frac{1}{2}\left|b^{-3/2}(\tilde{Q})(U_l-V_l)\right|
\leq M_{17}\sigma\varphi^{\gamma},
\end{aligned}\end{equation*}
where
\begin{align}\label{1-M15}
M_{15}=\frac{\mu_{21}\hat{\beta}_2\beta_7}{\hat{\beta}_1}+\mu_{21}\hat M_{14}\hat{\beta}_2^{\gamma+1},\ \
M_{16}=\mu_{21}\hat M_{14}\hat{\beta}_2^{(\gamma+1)/2},\ \
M_{17}=\frac{\mu_{21}\beta_7^2\sigma_1}{\hat{\beta}_1}+\mu_{21}\hat M_{14}\hat{\beta}_2^{3(\gamma+1)/2}.
\end{align}
Here $\mu_{21}$ is a positive constant depending only on $\check{\mu}$ and $\gamma$, $\hat{\beta}_j$ $(j=1,2)$, $\beta_7$  and  $\hat{M}_{14}$ are given by \eqref{1-jie1}, \eqref{1-beta7} and \eqref{1-M14}, respectively. Therefore, \eqref{1-eqla3.66}--\eqref{1-eqla3.65} can be obtained.
\end{proof}
\par
It follows
from  \eqref{1-mu1}, \eqref{1-beta*} and \eqref{1-eqe3} that we set
\begin{align}\label{1-sigma3}
\sigma_3=\min\left\{\sigma_1,\sigma _2,\frac{(7\gamma+1)(\gamma-1)}{\mu_6\mu_9(259\gamma^2 + 642\gamma + 379)} \right\}.
\end{align}
Furthermore, under the assumptions of Proposition \ref{1-prola3.2}, we choose
\begin{align}
\beta_8=M_{16},\quad
\beta_9=M_{15},\quad
\beta_{10}=M_{17}.\label{1-eqla3.87}
\end{align}
Then if $0< \sigma \leq \sigma_3$, one can get from \eqref{1-eqla3.64}--\eqref{1-eqla3.66} that
\begin{align*}
|Q_{\varphi\varphi}(\varphi,\psi)|\leq\beta_8\sigma\varphi^{-1},\ \
|Q_{\varphi\psi}(\varphi,\psi)|\leq\beta_9\sigma\varphi^{(\gamma-1)/2},\ \
|Q_{\psi\psi}(\varphi,\psi)|\leq\beta_{10}\sigma\varphi^{\gamma},\quad&(\varphi,\psi)\in(1,+\infty)\times(0,m).
\end{align*}
We now establish the global existence of the  solution in the following theorem.
\begin{theorem}
\label{1-thla3.2}
Given  $g\in C^4([-R_0\sin\theta_0,R_0\sin\theta_0])$ satisfying \eqref{1-eq2.1} and \eqref{1-eq2.43}. Then if $\sigma\le \sigma_3$ with $\sigma_3$ given by \eqref{1-sigma3}, the problem \eqref{1-eq2.226}--\eqref{1-eq2.2210} admits at least one weak solution
$Q\in C_{\rm{loc}}^{1,1}([1,+\infty)\times[0,m])$ satisfying \eqref{1-lath2}, \eqref{1-lath3} and
\begin{align}\label{1-eqla3.89}
|Q_{\varphi\varphi}(\varphi,\psi)|\leq\beta_8\sigma\varphi^{-1},\ \
|Q_{\varphi\psi}(\varphi,\psi)|\leq\beta_9\sigma\varphi^{(\gamma-1)/2},\ \
|Q_{\psi\psi}(\varphi,\psi)|\leq\beta_{10}\sigma\varphi^{\gamma},\ \
(\varphi,\psi)\in(1,+\infty)\times(0,m),
\end{align}
where $\beta _i$ $(i= 8,9,10)$ are positive constants given by \eqref{1-eqla3.87}.
\end{theorem}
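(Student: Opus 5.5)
We follow the scheme of Theorem \ref{1-th3.1} and Theorem \ref{1-th3.2}, now on the unbounded strip $[1,+\infty)\times[0,m]$. Let $\mathscr{S}_3$ be the set of $\tilde Q\in C^{1,1}_{\rm loc}([1,+\infty)\times[0,m])$ with $\tilde Q(1,\cdot)=Q(1,\cdot)$ given by Theorem \ref{1-th3.2}, satisfying \eqref{1-eq3.16*l}--\eqref{1-eq3.17*l} and \eqref{1-eq3.61l} with $\beta_8,\beta_9,\beta_{10}$ from \eqref{1-eqla3.87}. We equip it with the weighted norm $\|\tilde Q\|_{\mathscr{S}_3}=\sup\varphi^{-1}|\tilde Q|$, or equivalently with the topology of local uniform convergence, which is what compactness requires on an unbounded domain. $\mathscr{S}_3$ is nonempty and convex. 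The natural element to check is the solution from Theorem \ref{1-thl1}; alternatively one can check $-\varphi/(c_*^{1+2/(\gamma-1)}R_0)$ suitably corrected near $\varphi=1$ to match $Q(1,\cdot)$. For $\tilde Q\in\mathscr{S}_3$ we solve the linear problem \eqref{1-eqla3.7}--\eqref{1-eqla3.14} and set $J(\tilde Q)=Q$.

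\textbf{Invariance.} First, $J$ maps $\mathscr{S}_3$ into itself. The zeroth and first order bounds come from the characteristic argument of Proposition \ref{1-pro3.1}, now with the initial line $\varphi=1$ and data obeying \eqref{1-eqlaw1z1}. Along each characteristic the integrating factor $\exp\{\pm\int\frac12 b^{-1/2}p\tilde Q_\psi\}$ is controlled by \eqref{1-eq2.92}--\eqref{1-eq2.93} and $|\tilde Q_\psi|\le\beta_7\sigma\varphi^{(\gamma+1)/2}$. This gives a decay rate $\varphi^{-(\gamma+3)/2}\cdot\varphi^{(\gamma+1)/2}=\varphi^{-1}$, whose integral is logarithmic. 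That growth must be handled, either via the exact conservation/monotonicity structure used in Theorem \ref{1-thl1}, or by smallness of $\sigma$ if the bounds \eqref{1-lath1} only need to hold up to $O(\sigma)$ constants. This reproduces \eqref{1-lath1}--\eqref{1-lath3}. The second order bounds then follow from Proposition \ref{1-prola3.2} together with the choice \eqref{1-eqla3.87} and $\sigma\le\sigma_3$. The $\beta_j$ choice is consistent because $M_{15},M_{16},M_{17}$ do not depend on $\beta_8,\beta_9,\beta_{10}$. Note that $M_{13}$ depends on $\beta_9$, but this enters only through $\hat M_{14}$; if this dependence matters, I would first try to close it with the smallness condition in $\sigma_3$, in the same way $\beta_6=12M_{11}$ was fixed using $M_{10}\le 11/12$.

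\textbf{Compactness and continuity.} The bounds on $Q_\varphi,Q_\psi$ and on the second derivatives are locally uniform. By Arzel\`a--Ascoli on each $[1,n]\times[0,m]$ and a diagonal argument, $J(\mathscr{S}_3)$ is precompact in $C^1_{\rm loc}$, hence in the weighted sup topology, since the tails satisfy $|Q+\varphi/(c_*^{1+2/(\gamma-1)}R_0)|\le C\sigma\varphi$ uniformly. For continuity we argue by contradiction exactly as in Theorem \ref{1-th3.1}. Extract a subsequence converging locally uniformly with weak-$*$ convergence of derivatives, pass to the limit in the weak formulation tested against compactly supported functions, and invoke the uniqueness of the linear problem from Proposition \ref{1-prola3.2} (or \cite{WX15}). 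The Schauder fixed point theorem in the locally convex space $C_{\rm loc}$ (Tychonoff form), or in the weighted Banach space, then yields a fixed point. This fixed point solves \eqref{1-eq2.226}--\eqref{1-eq2.2210} and satisfies \eqref{1-lath2}, \eqref{1-lath3} and \eqref{1-eqla3.89}.

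\textbf{Main obstacle.} The delicate step is invariance of the first order bounds for the \emph{linearized} problem on an infinite interval. Unlike the nonlinear system treated in Theorem \ref{1-thl1}, where $W$ and $Z$ are preserved along characteristics modulo a sign-definite term, the linear source $\mp\frac12 b^{-1/2}p\tilde Q_\psi$ has no sign. The plan is to show that this source is $O(\sigma\varphi^{-1})$ and to absorb the resulting factor $\varphi^{C\sigma}$ into slightly enlarged constants. If that fails, the fallback is to define $J$ directly through the nonlinear solution of Theorem \ref{1-thl1}, which already satisfies \eqref{1-lath1}--\eqref{1-lath3}, and to use Proposition \ref{1-prola3.2} with $\tilde Q=Q$ only to obtain the $C^{1,1}$ bounds. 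That fallback is a bootstrap on $[1,N]$ with $N\to\infty$ and requires no fixed point at all.
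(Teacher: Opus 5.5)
\textbf{Framework.} Your fixed-point framework is not where the problem lies. The paper truncates to $[1,N]\times[0,m]$, applies Schauder in $C^1([1,N]\times[0,m])$ on $\mathscr{S}_N$, and lets $N\to\infty$ by Arzel\`a--Ascoli and a diagonal argument. That is interchangeable with your Tychonoff version in $C_{\rm loc}$. It is not interchangeable with the weighted norm $\sup\varphi^{-1}|\tilde Q|$: the tail bound $|Q+\varphi/(c_*^{1+2/(\gamma-1)}R_0)|\le C\sigma\varphi$ makes $\varphi^{-1}|Q_1-Q_2|$ bounded but not small at infinity, so precompactness fails there. A weight $\varphi^{-2}$ would work.

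\textbf{First-order invariance.} The genuine gap is the invariance step you flagged, and neither of your remedies closes it. Along characteristics of \eqref{1-eqla3.7}--\eqref{1-eqla3.8}, $W$ and $Z$ are multiplied by $\exp\{\mp\int_1^{\varphi}\frac12 b^{-1/2}p\tilde Q_\psi\,ds\}$. The integrand is only $O(\sigma s^{-1})$ and has no sign, so this factor is controlled only by $\varphi^{\pm C\sigma}$, which is unbounded on $[1,+\infty)$. No enlargement of constants and no smallness of $\sigma$ absorbs it, and \eqref{1-eqlaw1z1} leaves no room at $\varphi=1$. Hence $Q_\varphi=\frac12(W-Z)$ is not shown to stay in $[-\beta_2,-\beta_1]$, and $J(\mathscr{S}_3)\subset\mathscr{S}_3$ is unproved. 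The sign structure behind Theorem \ref{1-thl1} is available only when $\tilde Q=Q$. The paper's proof does not check this for the linearized map either: Proposition \ref{1-prola3.2} yields only second-order bounds, and its proof uses $|W|\le\beta_2$ and \eqref{1-qqla1}, which are properties of the nonlinear solution.

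\textbf{Second-order closure.} This step has the same flavour. Contrary to your claim, $M_{15},M_{16},M_{17}$ do depend on $\beta_9$. Indeed $F_l$ contains $-\frac12 p(\tilde Q)\tilde Q_{\varphi\psi}$, so $M_{13}$, $\hat M_{14}$ and $M_{15}$ are affine in $\beta_9$. Input and output both scale like $\beta_9\sigma$, so the slope is independent of $\sigma$ and smallness in $\sigma_3$ cannot close $\beta_9=M_{15}$. This is unlike $M_{10}=\frac56+O(\sigma)$ in the sonic region. With $R_l\approx-(\gamma+1)\varphi^{-1}$, a rough count gives a slope of about $(\gamma+1)/(\gamma-1)>1$; the paper's choice \eqref{1-eqla3.87} leaves this unaddressed as well.

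\textbf{Fallback.} Your fallback inherits the same circularity. Proposition \ref{1-prola3.2} assumes \eqref{1-eq3.61l} for $\tilde Q$, and a bootstrap on $[1,N]$ would need $\beta_9$ to improve strictly. The fallback is nonetheless the right direction; the missing idea is to estimate at the nonlinear level.
\begin{itemize}
\item Take $Q$ from Theorem \ref{1-thl1}, which already has the uniform first-order bounds.
\item Eliminate $Q_{\varphi\psi}$ from $F_l$ via $Q_{\varphi\psi}=-\frac12 b^{-1}(U_l+V_l)-\frac12 b^{-1}pQ_\varphi Q_\psi$.
\item Then $(U_l,V_l)$ solve a closed linear system. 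The diagonal coefficient is $\approx-\frac54(\gamma+1)\varphi^{-1}$, where the extra damping comes from $W<0<Z$. The coupling is $\approx-\frac14(\gamma+1)\varphi^{-1}$, the source is $O(\sigma\varphi^{-(\gamma+5)/2})$, and $U_l+V_l=0$ on the walls.
\item Apply a maximum principle along characteristics to $\varphi^{(\gamma+3)/2}\max\{|U_l|,|V_l|\}$. It closes because the net damping $\gamma+1$ exceeds $(\gamma+3)/2$ for $\gamma>1$, and it needs no a priori second-derivative input.
\item Finally, \eqref{1-eqla3.89} follows from the expressions for $Q_{\varphi\varphi},Q_{\varphi\psi},Q_{\psi\psi}$ in terms of $U_l,V_l$, and no fixed point is needed.
\end{itemize}
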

\begin{proof}
For each  $N>1$, define
$$
\Omega_N=(1,N)\times(0,m)
$$
and
\begin{align*}
\mathscr{S}_N=\Big\{&\tilde{Q}\in C^{1,1}([1,N]\times[0,m]) : \tilde{Q}\text{~satisfies~}\eqref{1-eq3.16*l}-\eqref{1-eq3.17*l}\text{~with~}\eqref{1-eq3.34}\text{~and~}\eqref{1-beta7},\\
&\quad|\tilde{Q}_{\varphi\varphi}(\varphi,\psi)|\leq\beta_8\sigma\varphi^{-1},
|\tilde{Q}_{\varphi\psi}(\varphi,\psi)|\leq\beta_9\sigma\varphi^{(\gamma-1)/2},|\tilde{Q}_{\psi\psi}(\varphi,\psi)|\leq\beta_{10}\sigma\varphi^{\gamma}\Big\}
\end{align*}
with the norm
$$
\|\tilde{Q}\|_{{\mathscr{S}}_N}=\|\tilde{Q}\|_{C^1([1,N]\times[0,m])},\quad\tilde{Q}\in {\mathscr{S}}_N.
$$
For a given $\tilde{Q}\in\mathscr{S}_N$, we consider the following linearized problem:
\begin{align*}
&Q_{\varphi\varphi}-(b(\tilde{Q})Q_{\psi})_{\psi}=0,&&(\varphi,\psi)\in(1,N)\times(0,m),\\
&Q(1,\psi)=Q(1,\psi),&&\psi\in(0,m),\\
&Q_\varphi(1,\psi)=Q_\varphi(1,\psi),&&\psi\in(0,m), \\
&Q_\psi(\varphi,0)=0,&&\varphi\in(1,N),\\
&Q_\psi(\varphi,m)=0,&&\varphi\in(1,N).
\end{align*}
By Proposition \ref{1-prola3.2}, the linear problem admits a unique solution $Q\in C^{1,1}([1,N]\times[0,m])$. Define a mapping $J_N$ from $\mathscr{S}_N$ to itself as follows
\begin{align*}
J_N(\tilde{Q})=Q,\quad\tilde{Q}\in\mathscr{S}_N.
\end{align*}
The estimates established  in Proposition \ref{1-prola3.2} are uniform for $\tilde Q\in\mathscr S_N$. Furthermore,  by  the similar  arguments as in the proof of Theorem \ref{1-th3.1}, one can prove that $J_N$ is continuous and compact. Hence, by the Schauder fixed point theorem, there exists a fixed point $Q^{(N)}\in\mathscr S_N$ such that

$$
J_N(Q^{(N)})=Q^{(N)}.
$$
Therefore, $Q^{(N)}$ is a solution of the nonlinear problem \eqref{1-eq2.226}--\eqref{1-eq2.2210} in $\Omega_N$ and satisfies
\begin{align*}
|Q^{(N)}_{\varphi\varphi}(\varphi,\psi)|
\leq \beta_8\sigma\varphi^{-1},\  \
|Q^{(N)}_{\varphi\psi}(\varphi,\psi)|
\leq \beta_9\sigma\varphi^{(\gamma-1)/2},\ \
|Q^{(N)}_{\psi\psi}(\varphi,\psi)| \ \
\leq \beta_{10}\sigma\varphi^\gamma
\end{align*}
for $(\varphi,\psi)\in\Omega_N$, where the constants $\beta_8$, $\beta_9$, and $\beta_{10}$,  defined in \eqref{1-eqla3.87}, are independent of $N$.
\par We now extend the solution globally by passing to the limit $N\to+\infty$. For any fixed $R>1$, the above estimates provide a uniform $C^{1,1}$ bound for $Q^{(N)}$ on $[1,R]\times[0,m]$ for all $N\geq R$. By the Arzel\`{a}-Ascoli theorem, the sequence $\{Q^{(N)}\}_{N \ge R}$ is precompact in $C^1([1,R]\times[0,m])$. Using a standard diagonal argument, we can extract a subsequence $\{Q^{(N_k)}\}$ and find a limit function $Q\in C^1_{\mathrm{loc}}([1,+\infty)\times[0,m])$ such that, for every fixed $R>1$,
\begin{align}\label{1-cv}
Q^{(N_k)} \longrightarrow Q \ \ \text{in } C^1([1,R]\times[0,m]) \ \ \text{as } k\to+\infty.
\end{align}
Since $Q^{(N_k)}$ satisfies the nonlinear problem \eqref{1-eq2.226}--\eqref{1-eq2.2210} on the truncated domain $[1,R]\times[0,m]$ for all sufficiently large $k$, the  convergence \eqref{1-cv} allows us to pass to the limit in the weak formulation. Hence, $Q$ is a global weak solution of the nonlinear problem \eqref{1-eq2.226}--\eqref{1-eq2.2210} in $(1,+\infty)\times(0,m)$.

\par It remains to verify the local $C^{1,1}$ regularity of $Q$.  For any sufficiently large $k$,
the uniform estimates on the second derivatives of $Q^{(N_k)}$ yield
\begin{align*}
\left|Q^{(N_k)}_\varphi(\varphi_1,\psi_1)-Q^{(N_k)}_\varphi(\varphi_2,\psi_2)\right|
&\leq
\beta_8\sigma|\varphi_1-\varphi_2|
+\beta_9\sigma R^{(\gamma-1)/2}|\psi_1-\psi_2|,\\
\left|Q^{(N_k)}_\psi(\varphi_1,\psi_1)-Q^{(N_k)}_\psi(\varphi_2,\psi_2)\right|
&\leq
\beta_9\sigma R^{(\gamma-1)/2}|\varphi_1-\varphi_2|
+\beta_{10}\sigma R^\gamma|\psi_1-\psi_2|.
\end{align*}
Passing to the limit as $k\to+\infty$, we obtain the same estimates for $Q_\varphi$ and $Q_\psi$. Hence, $Q_\varphi$ and $Q_\psi$ are Lipschitz continuous on $[1,R]\times[0,m]$. Since $R>1$ is arbitrary, it follows that
$
Q\in C^{1,1}_{\mathrm{loc}}([1,+\infty)\times[0,m]).
$
Moreover, the above estimates for the second derivatives remain valid for $Q$ almost everywhere.  Therefore,
\begin{align*}
|Q_{\varphi\varphi}(\varphi,\psi)|
\leq\beta_8\sigma\varphi^{-1},\ \
|Q_{\varphi\psi}(\varphi,\psi)|
\leq\beta_9\sigma\varphi^{(\gamma-1)/2},\ \
|Q_{\psi\psi}(\varphi,\psi)|
\leq\beta_{10}\sigma\varphi^\gamma
\end{align*}
for almost every $(\varphi,\psi)\in(1,+\infty)\times(0,m)$. This completes the proof.

\end{proof}
\par Now, by Theorems  \ref{1-th3.1}, \ref{1-th3.2}, and \ref{1-thla3.2}, we set \begin{align*}
\sigma_1^*=\sigma_3 \end{align*}
and
\begin{align*}
  C_1=\tilde{\beta}_1,\quad C_2=\tilde{\beta}_2,\quad C_3=\tilde{\beta}_3,\ \ C_4=\beta_7,\ \ C_5=\max\left\{\beta_4,\beta_5,\beta_6\right\},\ \ C_6=\max\left\{\beta_8,\beta_9,\beta_{10}\right\},
\end{align*}
where $\sigma_3$, $\tilde{\beta}_i$ $(i=1,2,3)$ and $\beta_j$ $(j=4,\cdots,10)$ are given by \eqref{1-sigma3}, \eqref{1-eq3.34}, \eqref{1-eq3.87}, \eqref{1-beta7} and \eqref{1-eqla3.87}, respectively. Then if $ \sigma\leq \sigma_1^*$, the problem \eqref{1-eq2.22}--\eqref{1-eq2.26} admits at least one weak solution $Q\in C_{\rm{loc}}^{1,1}([0,+\infty)\times[0,m])$ satisfying the estimates \eqref{1-eq2.44}--\eqref{1-eq2.461}. Therefore, the proof of Theorem
\ref{1-th2.1} is completed.

\section{Uniqueness}\label{un}\noindent
\par
In this section, we study the uniqueness of sonic-supersonic flows, which can be established by using the precise estimates obtained in the existence theory.
\begin{theorem}
\label{1-th4.1}
 Given  $g\in C^4([-R_0\sin\theta_0,R_0\sin\theta_0])$ satisfying \eqref{1-eq2.1} and \eqref{1-eq2.43}. There exists a positive constant $\sigma_4$ depending only on $\gamma$, $m$ and $R_0$ such that if  $\sigma\le \sigma_4$, the problem \eqref{1-eq2.22}--\eqref{1-eq2.26} admits at most one weak solution $Q\in C_{\rm{loc}}^{0,1}((0,+\infty)\times[0,m])$ satisfying
    \begin{equation}\label{1-eq4.1}
        \begin{aligned}
            -\tilde{\beta}_2\sigma\varphi &\le Q(\varphi, \psi) + \frac{\varphi}{c_*^{1+2/(\gamma-1)}R_0} \le \tilde{\beta}_1\sigma\varphi, \\
            -\tilde{\beta}_2\sigma &\le Q_\varphi(\varphi, \psi) + \frac{1}{c_*^{1+2/(\gamma-1)}R_0} \le \tilde{\beta}_1\sigma,
        \end{aligned}
        \quad  (\varphi, \psi) \in (0, +\infty) \times (0, m).
    \end{equation}
    Furthermore, the derivative $Q_\psi$ satisfies
    \begin{equation}\label{1-eq4.2}
        |Q_\psi(\varphi, \psi)| \le \tilde{\beta}_3\sigma\varphi, \quad  (\varphi, \psi) \in (0, 1] \times (0, m)
    \end{equation}
    and
    \begin{equation}\label{1-eq4.2*}
        |Q_\psi(\varphi, \psi)| \le \beta_7\sigma\varphi^{(\gamma+1)/2}, \quad (\varphi, \psi) \in (1, +\infty) \times (0, m),
    \end{equation}
where $\tilde{\beta}_1$, $(i=1,2,3)$ and $\beta_7$ are given by \eqref{1-eq3.34} and \eqref{1-beta7}, respectively.
\end{theorem}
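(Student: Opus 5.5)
Let $Q^{(1)}$ and $Q^{(2)}$ be two weak solutions satisfying \eqref{1-eq4.1}--\eqref{1-eq4.2*}. I would show that $\bar Q=Q^{(1)}-Q^{(2)}$ vanishes by a weighted energy (or characteristic) estimate on a linear equation for $\bar Q$. Subtracting the weak formulations gives
\begin{align*}
\bar Q_{\varphi\varphi}-\big(b(Q^{(1)})\bar Q_\psi\big)_\psi=\big(a\,\bar Q\big)_\psi,\qquad a=\Big(\int_0^1 b'(\theta Q^{(1)}+(1-\theta)Q^{(2)})\,d\theta\Big)Q^{(2)}_\psi,
\end{align*}
with homogeneous data $\bar Q(0,\psi)=\bar Q_\varphi(0,\psi)=0$ and Neumann conditions on $\psi=0,m$, all understood weakly. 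The bounds \eqref{1-eq4.1} give $Q^{(i)}\sim-\varphi$, so by \eqref{1-eq2.91}--\eqref{1-eq2.9} near $\varphi=0$, and by \eqref{1-eq2.92}--\eqref{1-eq2.93} for large $\varphi$, we have $b(Q^{(1)})\sim\varphi^{-1/2}$ for small $\varphi$ and $b(Q^{(1)})\sim\varphi^{-(\gamma+1)}$ for large $\varphi$. Since $|b'|\sim b\,(-s)^{-1}$ and the sizes of $Q_\psi$ are given by \eqref{1-eq4.2}--\eqref{1-eq4.2*}, the coefficient satisfies $|a|\le C\sigma\,b(Q^{(1)})$ near the sonic line, and $|a|\le C\sigma\,b\,\varphi^{(\gamma-1)/2}$ at infinity.

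Because local uniqueness propagates, it suffices to prove $\bar Q\equiv0$ on $[0,T]\times[0,m]$ for each $T$. The plan is a Holmgren/energy argument. For Lipschitz solutions I would first mollify in $\psi$, or else work with the weak formulation using the test function $\xi=\bar Q_\varphi\chi$, justified by Steklov averaging. I then form the energy
\begin{align*}
E(\varphi)=\int_0^m\big(\bar Q_\varphi^2+b(Q^{(1)})\bar Q_\psi^2+\bar Q^2\big)\,d\psi,
\end{align*}
whose derivative is
\begin{align*}
E'\le\int_0^m\big|\partial_\varphi b(Q^{(1)})\big|\,\bar Q_\psi^2\,d\psi+\text{(terms from }a).
\end{align*}
The source term $\int(a\bar Q)_\psi\bar Q_\varphi$ expands to $a_\psi\bar Q\bar Q_\varphi+a\bar Q_\psi\bar Q_\varphi$. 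The $a_\psi$ piece needs $Q_{\psi\psi}$, which we do not have, so I would instead integrate by parts, $-\int a\bar Q\,\bar Q_{\varphi\psi}$, and then in $\varphi$. A cleaner alternative is to work at the level of $(W,Z)$: the differences $\bar W,\bar Z$ satisfy the linear transport system \eqref{1-eq2.28}--\eqref{1-eq2.29} with a coupling through $\bar Q$ and $b(Q^{(1)})-b(Q^{(2)})$. Taking the sup over characteristics as in Proposition \ref{1-pro3.1}, I would bound $\sup_{[0,\varphi]}|(\bar W,\bar Z)|$ by $C\sigma$ times an integral involving $\varphi^{-1/4}$ and the same sup, plus $\sup|\bar Q|/\varphi$ times similar factors. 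Since $\bar Q(\varphi,\cdot)=\frac12\int_0^\varphi(\bar W-\bar Z)$, we also have $|\bar Q|\le\varphi\sup|(\bar W,\bar Z)|$. A further difference term comes from the characteristics themselves differing, because $b^{1/2}(Q^{(1)})\ne b^{1/2}(Q^{(2)})$. I would avoid it by writing the equation for $\bar W$ along the characteristics of $Q^{(1)}$, which moves $(b^{1/2}(Q^{(1)})-b^{1/2}(Q^{(2)}))W^{(2)}_\psi$ to the right-hand side. But $W^{(2)}_\psi$ is not controlled in this Lipschitz class, which is a problem.

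So the main obstacle is exactly this loss of derivative in the Lipschitz class, combined with the degeneracy at $\varphi=0$. My first attempt would be a weighted $L^2$ energy for the second-order equation in divergence form. Since all derivative losses can be put on the test function, I would take as test function $\xi=\bar Q_\varphi e^{-\lambda\varphi}\chi_{[0,T]}$. The term $(b(Q^{(1)})-b(Q^{(2)}))Q^{(2)}_\psi\bar Q_{\varphi\psi}$ is then handled by integrating by parts in $\varphi$, which produces $\partial_\varphi[(b'\bar Q)Q^{(2)}_\psi]\bar Q_\psi$; that only needs $Q_{\varphi\psi}$, which is again unavailable. Failing that, I would use the known $C^{1,1}$ solution from Theorem \ref{1-th2.1} as $Q^{(2)}$, with $Q^{(2)}_{\varphi\psi}$ bounded by \eqref{1-eq2.46}--\eqref{1-eq2.461}. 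This makes every coefficient derivative bounded by $C\sigma$ times powers of $\varphi$ that are integrable against the $b$-weights. The degeneracy at $\varphi=0$ is controlled because $|\partial_\varphi b(Q^{(1)})|/b\le C/\varphi$ has a good sign ($b$ decreasing in $\varphi$), and Gronwall applied with weights $\varphi^{1/4}$ together with $\bar Q=O(\varphi^2)$ near $0$ closes the argument. Smallness $\sigma\le\sigma_4$ absorbs the cross terms, giving $E\equiv0$ on $[0,T]$ for every $T$.
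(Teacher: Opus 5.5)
Your final route is the paper's in substance. It is a weak--strong energy argument in which one of the two solutions is the constructed $C^{1,1}$ solution of Theorem~\ref{1-th2.1}, so every admissible weak solution is shown to coincide with it. The only second derivatives needed are therefore those in \eqref{1-eq2.46}--\eqref{1-eq2.461}, and the arbitrary Lipschitz solution enters only through its first derivatives and the averaged coefficient $\int_0^1 p(\cdot)\,dt$. Once $Q^{(2)}$ is the strong solution, $Q^{(2)}_{\psi\psi}$ is also available, so the integration by parts in $\psi$ you discarded early on works as well.

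The difference lies in how the estimate is closed. The paper proceeds as follows:
\begin{itemize}
\item it works only on $(0,1)$ and invokes strict hyperbolicity for $\varphi\ge1$;
\item it inserts the weight $\varphi^{-1/4}$, so that $\frac18\varphi^{-5/4}Q_\varphi^2$ and $-\frac12(\varphi^{-1/4}b(\tilde Q))_\varphi Q_\psi^2$ (of size $\varphi^{-7/4}Q_\psi^2$) appear as coercive bulk terms;
\item it bounds $Q^2$ by the Poincar\'e-type inequality \eqref{1-eq4.3};
\item it absorbs all cross terms by smallness of $\sigma$, which is where the $m$-dependence of $\sigma_4$ comes from.
\end{itemize}

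You instead drop the good-sign term $\frac12\int\partial_\varphi b(Q^{(1)})\bar Q_\psi^2\,d\psi$ and apply Gronwall on $[0,T]$. This does close. Near $\varphi=0$ one has $|a|\le C\sigma\varphi^{-1/2}$, $|a_\varphi|\le C\sigma\varphi^{-3/2}$ and $\int_0^m\bar Q_\psi^2\,d\psi\le C\varphi^{1/2}E$. Hence every bulk error term is at most $C\sigma\varphi^{-1/4}\sup_{[0,\varphi]}E$, which is integrable at $0$. The boundary term $\int_0^m a\bar Q\bar Q_\psi\,d\psi$ at $\varphi$ is at most $C\sigma\varphi^{3/4}\sup_{[0,\varphi]}E$ and can be absorbed on short $\varphi$-intervals.

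What each version buys:
\begin{itemize}
\item Yours avoids the singular weight, so at $\varphi=0$ it only needs $\bar Q_\varphi(\varphi,\cdot)\to0$ in $L^2$, not a rate. It also replaces the $m$-dependent smallness by a continuation argument.
\item The paper's gives one closed inequality with no continuation argument.
\end{itemize}

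One correction: $\bar Q=O(\varphi^2)$ near $0$ is not available. Nothing in \eqref{1-eq4.1}--\eqref{1-eq4.2*} controls $Q_{\varphi\varphi}$ of the weak solution, so you only have $|\bar Q|\le C\sigma\varphi$. What your Gronwall step actually uses is $\|\bar Q(\varphi,\cdot)\|_{L^2(0,m)}\le\varphi\sup_{s\le\varphi}\|\bar Q_\varphi(s,\cdot)\|_{L^2(0,m)}$, which plays the role of \eqref{1-eq4.3}, and that is sufficient.
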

\begin{proof}
 Note that the problem \eqref{1-eq2.22}--\eqref{1-eq2.26} is equivalent to the problem \eqref{1-eq2.10}--\eqref{1-eq2.15}. Furthermore, since system \eqref{1-eq2.10} is strictly hyperbolic for $\varphi \in [1,+\infty)$, the uniqueness of its weak solutions follows directly from standard hyperbolic theory\cite{Li85}. Consequently, the proof of uniqueness for problem \eqref{1-eq2.22}--\eqref{1-eq2.26} can be reduced to the region $\varphi \in (0,1)$. Thanks to Theorem \ref{1-th3.2}, for $\varphi\in(0,1)$, the problem \eqref{1-eq2.22}--\eqref{1-eq2.26} admits a solution $\tilde{Q}\in C^{1,1}([0,1]\times[0,m])$ satisfying \eqref{1-eq3.44} and \eqref{1-eq3.89}. Therefore, one only needs to prove that $\tilde{Q}$ is the unique weak solution to the problem \eqref{1-eq2.22}--\eqref{1-eq2.26}. Assume that $\hat{Q}\in C^{0,1}([0,1]\times[0,m])$ is a weak solution to the problem \eqref{1-eq2.22}--\eqref{1-eq2.26} satisfying \eqref{1-eq4.1} and \eqref{1-eq4.2}. Note that
$$
\div\left(\hat{Q}_\varphi,-b(\hat{Q})\hat{Q}_\psi\right)=\div\left(\hat{Q}_\psi,-\hat{Q}_\varphi\right)=0
$$
in $(0,1)\times(0,m)$ in the sense of distribution. It follows from the theory of $L^\infty$ divergence-measure vector fields in \cite{A83} together with \eqref{1-eq4.1} and \eqref{1-eq4.2} that $\hat{Q}_\varphi$ and $\hat{Q}_\psi$ have $L^\infty$ trace on $[0,1]\times\{\psi\}$ and $\{\varphi\}\times[0,m]$ for each $\psi\in[0,m]$ and $\varphi\in[0,1].$
Set
$$
Q(\varphi,\psi)=\tilde{Q}(\varphi,\psi)-\hat{Q}(\varphi,\psi),\quad(\varphi,\psi)\in[0,1]\times[0,m].
$$
Then $Q\in C^{0,1}([0,1]\times[0,m])$ satisfies
$$
b(\tilde{Q}(\varphi,\psi))-b(\hat{Q}(\varphi,\psi))=h(\varphi,\psi)Q(\varphi,\psi),\quad(\varphi,\psi)\in(0,1)\times(0,m),
$$
where
$$
h(\varphi,\psi)=\int_{0}^{1}p(t\tilde{Q}(\varphi,\psi)+(1-t)\hat{Q}(\varphi,\psi))dt,\quad(\varphi,\psi)\in(0,1)\times(0,m).
$$
Using \eqref{1-eq2.9}, \eqref{1-qq1}, \eqref{1-qq2} and \eqref{1-eq3.89} on $\tilde{Q}$ and \eqref{1-eq4.1} and \eqref{1-eq4.2} on $\hat{Q}$ near $\varphi=0$,
direct computations show that for $(\varphi,\psi)\in (0,1)\times(0,m)$,
\begin{align*}
|Q_\varphi(\varphi,\psi)|\leq \beta_2,\quad&|Q_\psi(\varphi,\psi)|\leq \beta_3\varphi,\quad|Q_{\varphi\psi}(\varphi,\psi)|\leq \beta_5\sigma\varphi,\\
\frac{\mu_{22}}{\beta_2^{3/2}}\varphi^{-3/2}\leq h(\varphi,\psi)\leq \frac{\mu_{22}}{\beta_1^{3/2}}&\varphi^{-3/2},\quad
\frac{-\mu_{22}\beta_2}{\beta_1^{5/2}}\varphi^{-5/2}\leq h_\varphi(\varphi,\psi)\leq \frac{-\mu_{22}\beta_1}{\beta_2^{5/2}}\varphi^{-5/2},
\end{align*}
where $\mu_{22}$ is a positive constant depending only on $\hat{\mu}$. Then it follows from H\"older inequality that
\begin{align}\label{1-eq4.3}
\sup_{(0,m)} Q^2(\phi,\cdot)\leq\,&\bigg(\frac{1}{m}\int_{0}^{m}|Q(\phi,\psi)|d\psi+\int_{0}^{m}|Q_\psi(\phi,\psi)|d\psi\bigg)^2\nonumber\\
\leq\,&\bigg(\frac{1}{m}\int_{0}^{\phi}\int_{0}^{m}|Q_\varphi(\varphi,\psi)|d\varphi d\psi+\int_{0}^{m}|Q_\psi(\phi,\psi)|d\psi\bigg)^2\nonumber\\
\leq\,&\frac{8}{9m}\phi^{9/4}\int_{0}^{\phi}\int_{0}^{m}\varphi^{-5/4}Q^2_\varphi(\varphi,\psi)d\varphi d\psi+2m\int_{0}^{m}Q^2_\psi(\phi,\psi)d\psi,\quad\phi\in(0,1).
\end{align}

Now, we use weighted energy estimate to prove the Theorem \ref{1-th4.1}. Given $\phi\in (0,1)$, multiplying the equation of $\tilde{Q}$ by $\varphi^{-1/4}\tilde{Q}_{\varphi}$ and $-\varphi^{-1/4}\hat{Q}_{\varphi}$, respectively, and then integrating over $(0,\phi)\times(0,m)$, one gets that
\begin{equation}\label{1-eq4.4}
\begin{aligned}
&0=\int_{0}^{\phi}\int_{0}^{m}\varphi^{-1/4}\tilde{Q}_{\varphi}\tilde{Q}_{\varphi\varphi}d\varphi d\psi
-\int_{0}^{\phi}\int_{0}^{m}\varphi^{-1/4}(b(\tilde{Q})\tilde{Q}_{\psi})_{\psi}\tilde{Q}_{\varphi}d\varphi d\psi \\
&=\frac{1}{2}\left.\int_{0}^{m}\varphi^{-1/4}\tilde{Q}_{\varphi}^{2}d\psi\right|_{\varphi=0}^{\varphi=\phi}
+\frac18\int_{0}^{\phi}\int_{0}^{m}\varphi^{-5/4}\tilde{Q}_{\varphi}^{2}d\varphi d\psi
\\
&\ +
\frac{1}{2}\left.
\int_{0}^{m}\varphi^{-1/4}b(\tilde{Q})\tilde{Q}_{\psi}^{2}d\psi
\right|_{\varphi=0}^{\varphi=\phi}-\frac{1}{2}\int_{0}^{\phi}\int_{0}^{m}(\varphi^{-1/4}b(\tilde{Q}))_{\varphi}\tilde{Q}_{\psi}^{2}d\varphi d\psi, \\
\end{aligned}
\end{equation}
and
\begin{align}\label{1-eq4.5}
0=-\int_{0}^{\phi}\int_{0}^{m}\varphi^{-1/4}\hat{Q}_{\varphi}\tilde{Q}_{\varphi\varphi}d\varphi d\psi+\int_{0}^{\phi}\int_{0}^{m}\varphi^{-1/4}(b(\tilde{Q})\tilde{Q}_{\psi})_\psi\hat{Q}_{\varphi}d\varphi d\psi.
\end{align}
Then a direct computation shows that
\begin{align}\label{1-eq4.6}
&\int_{0}^{\phi}\int_{0}^{m}\varphi^{-1/4}(b(\tilde{Q})\tilde{Q}_{\psi})_\psi\hat{Q}_{\varphi}d\varphi d\psi\nonumber\\
=\,&-\left.\int_{0}^{m}\varphi^{-1/4}b(\tilde{Q})\tilde{Q}_{\psi}\hat{Q}_{\psi}d\psi\right|_{\varphi=0}^{\varphi=\phi}
+\int_{0}^{\phi}\int_{0}^{m}(\varphi^{-1/4}b(\tilde{Q}))_{\varphi}\tilde{Q}_{\psi}\hat{Q}_{\psi}d\varphi d\psi\nonumber\\
&\quad+\int_{0}^{\phi}\int_{0}^{m}\varphi^{-1/4}b(\tilde{Q})\tilde{Q}_{\varphi\psi}\hat{Q}_{\psi}d\varphi d\psi.
\end{align}
Furthermore, it follows from the definition of weak solutions and the standard limit process that
\begin{equation}\label{1-eq4.7}
\begin{aligned}
&0=\frac{1}{2}\left.\int_{0}^{m}\varphi^{-1/4}\hat{Q}_{\varphi}^{2}d\psi\right|_{\varphi=0}^{\varphi=\phi}
+\frac18\int_{0}^{\phi}\int_{0}^{m}\varphi^{-5/4}\hat{Q}_{\varphi}^{2}d\varphi d\psi
\\
&\ \ +\frac{1}{2}\left.\int_{0}^{m}\varphi^{-1/4}b(\hat{Q})\hat{Q}_{\psi}^{2}d\psi\right|_{\varphi=0}^{\varphi=\phi}-\frac{1}{2}\int_{0}^{\phi}\int_{0}^{m}(\varphi^{-1/4}b(\hat{Q}))_{\varphi}\hat{Q}_{\psi}^{2}d\varphi d\psi \\
\end{aligned}
\end{equation}
and
\begin{align}\label{1-eq4.8}
0=\,&-\left.\int_{0}^{m}\varphi^{-1/4}\tilde{Q}_{\varphi}\hat{Q}_{\varphi}d\psi\right|_{\varphi=0}^{\varphi=\phi}
-\frac{1}{4}\int_{0}^{\phi}\int_{0}^{m}\varphi^{-5/4}\tilde{Q}_{\varphi}\hat{Q}_{\varphi} d\varphi d\psi\nonumber\\
&\quad+\int_{0}^{\phi}\int_{0}^{m}\varphi^{-1/4}\hat{Q}_{\varphi}\tilde{Q}_{\varphi\varphi} d\varphi d\psi
-\int_{0}^{\phi}\int_{0}^{m}\varphi^{-1/4}b(\hat{Q})\tilde{Q}_{\varphi\psi}\hat{Q}_{\psi}d\varphi d\psi.
\end{align}
Integrating by parts leads to
\begin{align}\label{1-eq4.9}
&\int_{0}^{\phi}\int_{0}^{m}\varphi^{-1/4}hQ\tilde{Q}_{\varphi\psi}\tilde{Q}_{\psi}d\varphi d\psi\nonumber\\
=\,&\frac{1}{2}\left.\int_{0}^{m}\varphi^{-1/4}hQ\tilde{Q}_{\psi}^2d\psi\right|_{\varphi=0}^{\varphi=\phi}
-\frac{1}{2}\int_{0}^{\phi}\int_{0}^{m}(\varphi^{-1/4}hQ)_{\varphi}\tilde{Q}_{\psi}^2d\varphi d\psi.
\end{align}
Summing up from \eqref{1-eq4.4} to \eqref{1-eq4.9} yields that
$$
0=I_3+I_4+I_5,
$$
where
\begin{equation*}
  \begin{aligned}
I_3=\,&\frac18\int_{0}^{\phi}\int_{0}^{m}\varphi^{-5/4}\left(\tilde{Q}_\varphi^2+\hat{Q}_{\varphi}^2-2\tilde{Q}_{\varphi}\hat{Q}_{\varphi}\right)
=\frac18\int_{0}^{\phi}\int_{0}^{m}\varphi^{-5/4}Q_{\varphi}^2d\varphi d\psi,\\
I_4=\,&-\frac{1}{2}\int_{0}^{\phi}\int_{0}^{m}\left((\varphi^{-1/4}b(\tilde{Q}))_{\varphi}\tilde{Q}_{\psi}^2+(\varphi^{-1/4}b(\hat{Q}))_{\varphi}\hat{Q}_{\psi}^2-2(\varphi^{-1/4}b(\tilde{Q}))_{\varphi}\tilde{Q}_{\psi}\hat{Q}_{\psi}\right)d\varphi d\psi\\
&\quad-\frac{1}{2}\int_{0}^{\phi}\int_{0}^{m}(\varphi^{-1/4}hQ)_{\varphi}\tilde{Q}_{\psi}^2d\varphi d\psi\\
&\quad-\int_{0}^{\phi}\int_{0}^{m}\varphi^{-1/4}\left(b(\hat{Q})\tilde{Q}_{\varphi\psi}\hat{Q}_{\psi}-b(\tilde{Q})\tilde{Q}_{\varphi\psi}\hat{Q}_{\psi}+hQ\tilde{Q}_{\varphi\psi}\tilde{Q}_{\psi}\right)d\varphi d\psi\\
=\,&-\frac{1}{2}\int_{0}^{\phi}\int_{0}^{m}(\varphi^{-1/4}b(\tilde{Q}))_{\varphi}Q_{\psi}^2 d\varphi d\psi-\frac{1}{2}\int_{0}^{\phi}\int_{0}^{m}\varphi^{-1/4}h(\tilde{Q}_{\psi}+\hat{Q}_{\psi})Q_{\varphi}Q_{\psi}d\varphi d\psi\\
&\quad-\frac{1}{2}\int_{0}^{\phi}\int_{0}^{m}(\varphi^{-1/4}h)_{\varphi}(\tilde{Q}_{\psi}+\hat{Q}_{\psi})QQ_{\psi}d\varphi d\psi-\int_{0}^{\phi}\int_{0}^{m}\varphi^{-1/4}h\tilde{Q}_{\varphi\psi}QQ_{\psi}d\varphi d\psi,\\
 I_5=\,&\frac{1}{2}\left.\int_{0}^{m}\varphi^{-1/4}\left(\tilde{Q}_{\varphi}^2+\hat{Q}_{\varphi}^2-2\tilde{Q}_{\varphi}\hat{Q}_{\varphi}\right)d\psi\right|_{\varphi=0}^{\varphi=\phi}\\
&\quad+\frac{1}{2}\left.\int_{0}^{m}\varphi^{-1/4}
\left(b(\tilde{Q})\tilde{Q}_{\psi}^2-2b(\tilde{Q})\tilde{Q}_{\psi}\hat{Q}_{\psi}+b(\hat{Q})\hat{Q}_{\psi}^2+hQ\tilde{Q}_{\psi}^2\right)d\psi\right|_{\varphi=0}^{\varphi=\phi}\\
=\,&\frac{1}{2}\left.\int_{0}^{m}\varphi^{-1/4}Q_{\varphi}^2d\psi\right|_{\varphi=\phi}
+\frac{1}{2}\left.\int_{0}^{m}\varphi^{-1/4}b(\tilde{Q})
Q_{\psi}^2d\psi\right|_{\varphi=\phi}
+\frac{1}{2}\left.\int_{0}^{m}\varphi^{-1/4}
h(\tilde{Q}_{\psi}+\hat{Q}_{\psi})QQ_{\psi}d\psi\right|_{\varphi=\phi}.
  \end{aligned}
\end{equation*}
Therefore, one has
\begin{align*}
&\frac18\int_{0}^{\phi}\int_{0}^{m}\varphi^{-5/4}Q_{\varphi}^2d\varphi d\psi
-\frac{1}{2}\int_{0}^{\phi}\int_{0}^{m}(\varphi^{-1/4}b(\tilde{Q}))_{\varphi}Q_{\psi}^2d\varphi d\psi\\
&\quad+\frac{1}{2}\left.\int_{0}^{m}\varphi^{-1/4}Q_{\varphi}^2d\psi\right|_{\varphi=\phi}
+\frac{1}{2}\left.\int_{0}^{m}\varphi^{-1/4}b(\tilde{Q})Q_{\psi}^2d\psi\right|_{\varphi=\phi}\\
=\,&\frac{1}{2}\int_{0}^{\phi}\int_{0}^{m}\varphi^{-1/4}h(\tilde{Q}_{\psi}+\hat{Q}_{\psi})Q_{\varphi}Q_{\psi}d\varphi d\psi
+\frac{1}{2}\int_{0}^{\phi}\int_{0}^{m}(\varphi^{-1/4}h)_{\varphi}(\tilde{Q}_{\psi}+\hat{Q}_{\psi})QQ_{\psi}d\varphi d\psi\\
&\quad+\int_{0}^{\phi}\int_{0}^{m}\varphi^{-1/4}h\tilde{Q}_{\varphi\psi}QQ_{\psi}d\varphi d\psi
-\frac{1}{2}\left.\int_{0}^{m}\varphi^{-1/4}h(\tilde{Q}_{\psi}+\hat{Q}_{\psi})QQ_{\psi}d\psi\right|_{\varphi=\phi}.
\end{align*}
It follows from the regularity estimates of $\tilde{Q}$, $\hat{Q}$ and $h$, the asymptotic behavior of $\tilde{Q}_{\psi}$ and $\tilde{Q}_{\varphi\psi}$ on the wall near $\varphi=0$, \eqref{1-jie1}, and the H\"older inequality that
\begin{align}\label{1-eq4.10}
&\frac18\int_{0}^{\phi}\int_{0}^{m}\varphi^{-5/4}Q_{\varphi}^2d\varphi d\psi+\frac{\mu_{23}\hat \beta_1}{\hat \beta_2^{3/2}}\int_{0}^{\phi}\int_{0}^{m}\varphi^{-7/4}Q_{\psi}^2d\varphi d\psi\nonumber\\
&\quad+\frac{1}{2}\left.\int_{0}^{m}\varphi^{-1/4}Q_{\varphi}^2d\psi\right|_{\varphi=\phi}
+\frac{\mu_{23}}{\hat \beta_2^{1/2}}\left.\int_{0}^{m}\varphi^{-3/4}Q_{\psi}^2d\psi\right|_{\varphi=\phi}\nonumber\\
\leq\,&M_{18}\sigma\int_{0}^{\phi}\int_{0}^{m}\varphi^{-3/4}Q_{\varphi}Q_{\psi}d\varphi d\psi+M_{19}\sigma\int_{0}^{\phi}\int_{0}^{m}\varphi^{-7/4}QQ_{\psi}d\varphi d\psi\nonumber\\
&\quad+M_{18}\sigma\left.\int_{0}^{m}\varphi^{-3/4}QQ_{\psi}d\psi\right|_{\varphi=\phi}
\nonumber\\
\leq\,&\frac{M_{18}\sigma}{2}\int_{0}^{\phi}\int_{0}^{m}\varphi^{-3/4}Q_{\varphi}^2d\varphi d\psi+\frac{M_{18}\sigma}{2}\int_{0}^{\phi}\int_{0}^{m}\varphi^{-3/4}Q_{\psi}^2d\varphi d\psi\nonumber\\
&\quad
+\frac{M_{19}\sigma}{2}\int_{0}^{\phi}\int_{0}^{m}\varphi^{-7/4}Q_{\psi}^2d\varphi d\psi+\frac{M_{19}\sigma}{2}\int_{0}^{\phi}\int_{0}^{m}\varphi^{-7/4}Q^2d\varphi d\psi\nonumber\\
&\quad
+\frac{M_{18}\sigma}{2}\left.\int_{0}^{m}\varphi^{-3/4}Q_{\psi}^2d\psi\right|
_{\varphi=\phi}
+\frac{M_{18}\sigma}{2}\left.\int_{0}^{m}\varphi^{-3/4}Q^2d\psi\right|_{\varphi=\phi},
\end{align}
where $\mu_{23}$ is a positive constant depending only on $\hat{\mu}$, and
\begin{align}\label{1-M18}
M_{18}=\frac{\mu_{22}\tilde{\beta}_3}{\hat \beta_1^{3/2}},\quad
M_{19}=\frac{\mu_{22}\beta_5}{\hat \beta_1^{3/2}}+\frac{5\mu_{22}\hat \beta_2\tilde{\beta}_3}{4\hat \beta_1^{5/2}}.
\end{align}
Using \eqref{1-eq4.3} obtains
\begin{align*}
&\frac18\int_{0}^{\phi}\int_{0}^{m}\varphi^{-5/4}Q_{\varphi}^2\,d\varphi\,d\psi +\frac{\mu_{23}\hat{\beta}_1}{\hat{\beta}_2^{3/2}} \int_{0}^{\phi}\int_{0}^{m}\varphi^{-7/4}Q_{\psi}^2\,d\varphi\,d\psi \nonumber\\
&\quad+\frac{1}{2}\left.\int_{0}^{m}\varphi^{-1/4}Q_{\varphi}^2\,d\psi\right|_{\varphi=\phi} + \frac{\mu_{23}}{\hat{\beta}_2^{1/2}}\left.\int_{0}^{m}\varphi^{-3/4}Q_{\psi}^2\,d\psi\right|_{\varphi=\phi} \nonumber\\
\leq&\,
\bigg( \frac{M_{18}\sigma}{2}\phi^{1/2} + \left( \frac{4M_{18}\sigma}{9} + \frac{8M_{19}\sigma}{27} \right) \phi^{3/2} \bigg)
 \int_{0}^{\phi}\int_{0}^{m}\varphi^{-5/4}Q_{\varphi}^2\,d\varphi\,d\psi \nonumber\\
&\quad+ \bigg( \frac{M_{18}\sigma}{2}\phi + \frac{(1+2m^2)M_{19}\sigma}{2} \bigg)
\int_{0}^{\phi}\int_{0}^{m}\varphi^{-7/4}Q_{\psi}^2\,d\varphi\,d\psi \nonumber\\
&\quad+ \frac{(1 + 2m^2)M_{18}\sigma}{2} \left.\int_{0}^{m}\varphi^{-3/4}Q_{\psi}^2\,d\psi\right|_{\varphi=\phi},
\end{align*}
which yields that
\begin{align*}
&\bigg(\frac18-\frac{17M_{18}\sigma}{18}-\frac{8 M_{19}\sigma}{27}\bigg)\int_{0}^{\phi}\int_{0}^{m}\varphi^{-5/4}Q_{\varphi}^2d\varphi d\psi\\
&\quad+\bigg(\frac{\mu_{23}\hat \beta_1}{\hat \beta_2^{3/2}}-\frac{M_{18}\sigma+(1+2m^2)M_{19}\sigma}{2}\bigg)\int_{0}^{\phi}\int_{0}^{m}\varphi^{-7/4}Q_{\psi}^2d\varphi d\psi\nonumber\\
&\quad+\frac{1}{2}\left.\int_{0}^{m}\varphi^{-1/4}Q_{\varphi}^2d\psi\right|_{\varphi=\phi}+\bigg(\frac{\mu_{23}}{\hat \beta_2^{1/2}}-\frac{(1+2m^2)M_{18}\sigma}{2}\bigg)\left.\int_{0}^{m}\varphi^{-3/4}Q_{\psi}^2d\psi\right|_{\varphi=\phi}\nonumber\leq0.
\end{align*}
Choose $\sigma_4$ to satisfy
\begin{align}\label{1-sigma4}
\sigma_4=\min\left\{\sigma_1,\sigma _2,\sigma_3,\frac{27}{204M_{18}+64M_{19}},
\frac{2\mu_{23}\hat \beta_1}{\hat \beta_2^{3/2}\left(M_{18}+(1+2m^2)M_{19}\right)},
\frac{2\mu_{23}}{\hat \beta_2^{1/2}(1+2m^2)M_{18}}
\right\}.
\end{align}
Therefore, for $\phi\in(0,1)$,
\begin{align*}
Q_\varphi(\varphi,\psi)=0,\quad Q_\psi(\varphi,\psi)=0,\quad(\varphi,\psi)\in(0,\phi)\times(0,m),
\end{align*}
from which one gets
\begin{align*}
\tilde{Q}(\varphi,\psi)=\hat{Q}(\varphi,\psi),\quad(\varphi,\psi)\in(0,\phi)\times(0,m).
\end{align*}
\end{proof}
 {\bf Acknowledgement.} The research of Chunpeng Wang is partially supported by the National Natural Science Foundation of China (Grant No. 12471221) and the Fundamental and Interdisciplinary Disciplines Breakthrough Plan of the Ministry of Education of China (Grant No. JYB2025XDXM308).  The research of Zihao Zhang is  supported by the China Scholarship Council (Grant No. 202606170019).
\par {\bf Data availability.} No data was used for the research described in the article.
\par {\bf Conflict of interest.} This work does not have any conflicts of interest.


\end{document}